\documentclass[reqno,10pt]{amsart}
\usepackage{amscd,amssymb,verbatim,array}
\usepackage{hyperref}
\usepackage{amsmath}
\usepackage[active]{srcltx} 

\setlength{\textwidth}{6.3in}
 \addtolength{\oddsidemargin}{-1.7cm}
\addtolength{\evensidemargin}{-1.7cm}

\numberwithin{equation}{section} \theoremstyle{plain}

\newtheorem{theorem}{Theorem}[section]
\newtheorem{lemma}[theorem]{Lemma}

\newtheorem{corollary}[theorem]{Corollary}

\theoremstyle{definition}

\theoremstyle{remark}

\numberwithin{equation}{section}

\newcommand{\Det}{\operatorname{Det}}
\newcommand{\B}{\mathcal{B}}

\newcommand{\even}{\operatorname{even}}
\newcommand{\trivial}{\operatorname{trivial}}
\newcommand{\cyl}{\operatorname{cyl}}
\newcommand{\odd}{\operatorname{odd}}
\newcommand{\gr}{\operatorname{gr}}

\newcommand{\Tan}{\operatorname{tan}}
\newcommand{\Nor}{\operatorname{nor}}
\newcommand{\Dim}{\operatorname{dim}}
\newcommand{\Mod}{\operatorname{mod}}

\newcommand{\rank}{\operatorname{rank}}

\newcommand{\Ker}{\operatorname{ker}}
\newcommand{\ord}{\operatorname{ord}}
\newcommand{\Spec}{\operatorname{Spec}}
\newcommand{\Tr}{\operatorname{Tr}}
\newcommand{\re}{\operatorname{Re}}

\newcommand{\Res}{\operatorname{Res}}
\newcommand{\rres}{\operatorname{res}}
\newcommand{\Imm}{\operatorname{Im}}
\newcommand{\Dom}{\operatorname{Dom}}

\newcommand{\rel}{\operatorname{rel}}
\newcommand{\Abs}{\operatorname{abs}}

\newcommand{\ddet}{\operatorname{det}}
\newcommand{\Id}{\operatorname{Id}}
\newcommand{\supp}{\operatorname{supp}}
\newcommand{\SF}{\operatorname{SF}}
\newcommand{\Mas}{\operatorname{Mas}}


\begin{document}

\title[The gluing formula of the refined analytic torsion]
{The gluing formula of the refined analytic torsion for an acyclic Hermitian connection}
\author{Rung-Tzung Huang}

\address{Institute of Mathematics, Academia Sinica, 6th floor, Astronomy-Mathematics Building, No. 1, Section 4, Roosevelt Road, Taipei, 106-17,Taiwan}

\email{rthuang@math.sinica.edu.tw}

\author{Yoonweon Lee}

\address{Department of Mathematics, Inha University, Incheon, 402-751, Korea}

\email{yoonweon@inha.ac.kr}

\subjclass[2000]{Primary: 58J52; Secondary: 58J28, 58J50}
\keywords{refined analytic torsion, zeta-determinant, eta-invariant, odd signature operator, well-posed boundary condition}
\thanks{The second author was supported by the NRF with the grant number 2010-0008726.}

\begin{abstract}
In the previous work ([14]) we introduced the well-posed boundary conditions ${\mathcal P}_{-, {\mathcal L}_{0}}$ and
${\mathcal P}_{+, {\mathcal L}_{1}}$ for the odd signature operator
to define the refined analytic torsion on a compact manifold with boundary.
In this paper we discuss the gluing formula of the refined analytic torsion for an acyclic Hermitian connection
with respect to the boundary conditions ${\mathcal P}_{-, {\mathcal L}_{0}}$ and
${\mathcal P}_{+, {\mathcal L}_{1}}$.
In this case the refined analytic torsion consists of the Ray-Singer analytic torsion, the eta invariant
and the values of the zeta functions at zero.
We first compare the Ray-Singer analytic torsion and eta invariant subject to the boundary condition ${\mathcal P}_{-, {\mathcal L}_{0}}$ or
${\mathcal P}_{+, {\mathcal L}_{1}}$ with the Ray-Singer analytic torsion subject to the relative (or absolute) boundary condition
and eta invariant subject to the APS boundary condition on a compact manifold with boundary.
Using these results together with the well known gluing formula of the Ray-Singer analytic torsion subject to the relative
and absolute boundary conditions
and eta invariant subject to the APS boundary condition, we obtain the main result.
\end{abstract}
\maketitle

\section{Introduction}

\vspace{0.2 cm}

The refined analytic torsion was introduced by M. Braverman and T. Kappeler ([4], [5]) on an odd dimensional closed Riemannian manifold
with a flat bundle as an analytic analogue of the refined combinatorial torsion introduced by M. Farber and V. Turaev ([10], [11], [25], [26]).
Even though these two objects do not coincide exactly, they are closely related.
The refined analytic torsion is defined by using the graded zeta-determinant of the odd signature operator and is described as
an element of the determinant line of the cohomologies.
Specially, when the odd signature operator is defined by an acyclic Hermitian connection on a closed manifold, the refined analytic torsion is a complex number,
whose modulus is the Ray-Singer analytic torsion and the phase part is the $\rho$-invariant determined by the given odd signature operator
and the trivial odd signature operator acting on the trivial line bundle.

In the previous work ([14]) we introduced the well-posed boundary conditions ${\mathcal P}_{-, {\mathcal L}_{0}}$ and ${\mathcal P}_{+, {\mathcal L}_{1}}$
for the odd signature operator, which are complementary to each other and have similar properties as the relative and absolute boundary conditions.
We showed that the refined analytic torsion is well-defined under these boundary conditions on a compact oriented Riemannian manifold with boundary.
In this paper we discuss the gluing formula of the refined analytic torsion with respect to the boundary conditions
${\mathcal P}_{-, {\mathcal L}_{0}}$ and ${\mathcal P}_{+, {\mathcal L}_{1}}$
when the odd signature operator is given by an acyclic Hermitian connection.
In this case the refined analytic torsion consists of the Ray-Singer analytic torsion, the eta invariant
and the values of the zeta functions at zero. The gluing formula of the Ray-Singer analytic torsion with respect to the relative and absolute boundary conditions has been obtained by W. L\"uck ([21]), D. Burghelea, L. Friedlander and T. Kappeler in [9] (cf. [29]).
The gluing formula of the eta invariant
with respect to the Atiyah-Patodi-Singer (APS) boundary condition has
been studied by many authors, for instance, K. Wojciechowski ([32], [33]), U. Bunke ([7]), J. Br\"uning, M. Lesch ([6]), P. Kirk and M. Lesch  ([17]).
To use these results we first compare the Ray-Singer analytic torsion subject to the boundary condition ${\mathcal P}_{-, {\mathcal L}_{0}}$ or
${\mathcal P}_{+, {\mathcal L}_{1}}$ with the Ray-Singer analytic torsion subject to the relative or the absolute boundary condition.
We next compare the eta invariant associated to the odd signature operator subject to ${\mathcal P}_{-, {\mathcal L}_{0}}$ or
${\mathcal P}_{+, {\mathcal L}_{1}}$ with the eta invariant subject to the APS boundary condition.
To compare the Ray-Singer analytic torsions we are going to use the BFK-gluing formula for zeta-determinants ([8], [18], [19]) and the adiabatic limit method.
To compare the eta invariants we are going to follow the method given in [6].
These comparison results together with the well known gluing formulas lead to our main result.
The boundary value problem and the gluing formula of the refined analytic torsion
have been already studied by B. Vertman ([27], [28]) but our method is completely different from what he presented.

\vspace{0.3 cm}

Let $(M, g^{M})$ be a compact oriented odd dimensional Riemannian
manifold with boundary $Y$, where $g^{M}$ is assumed to be a product metric near the boundary $Y$.
We denote the dimension of $M$ by $m = 2r - 1$. Suppose that $\rho : \pi_{1}(M) \rightarrow
GL(n, {\Bbb C})$ is a representation of the fundamental group and $E = {\widetilde M}
\times_{\rho} {\Bbb C}^{n}$ is the associated flat bundle, where ${\widetilde M}$
is a universal covering space of $M$. We choose a flat connection
$\nabla$ and extend it to a covariant differential

$$
\nabla : \Omega^{\bullet}(M, E) \rightarrow \Omega^{\bullet + 1}(M, E).
$$

\noindent
Using the Hodge star operator $\ast_{M}$, we define the involution
$\Gamma = \Gamma(g^{M}) : \Omega^{\bullet}(M, E) \rightarrow \Omega^{m - \bullet}(M, E)$ by

\begin{equation}\label{E:1.1}
\Gamma \omega := i^{r} (-1)^{\frac{q(q+1)}{2}} \ast_{M} \omega, \qquad \omega \in \Omega^{q}(M, E),
\end{equation}

\noindent
where $r$ is given as above by $r = \frac{m+1}{2}$.
It is straightforward to see that $\Gamma^{2} = \Id$.
We define the odd signature operator $\B$ by

\begin{equation}\label{E:1.2}
    \B\ = \  \B(\nabla,g^M) \ := \ \Gamma\,\nabla \ + \ \nabla\,\Gamma:\,\Omega^\bullet(M,E)\ \longrightarrow \  \Omega^\bullet(M,E).
\end{equation}

\noindent
Then $\B$ is an elliptic differential operator of order $1$.
Let $N$ be a collar neighborhood of $Y$ which is isometric to $[0, 1) \times Y$.
Any $q$-form $\omega$ can be written , on $N$, by
$$
\omega = \omega_{\Tan} + du \wedge \omega_{\Nor},
$$
where $\omega_{\Tan}$ and $\omega_{\Nor}$ are the tangential and normal parts of $\omega$
and $du$ is the dual of the inward unit normal vector field $\partial u$ to the boundary $Y$ on $N$.
Then we have a natural isomorphism

\begin{equation}\label{E:1.3}
\Psi : \Omega^{p}(N, E|_{N}) \rightarrow C^{\infty}([0, 1), \Omega^{p}(Y, E|_{Y}) \oplus \Omega^{p-1}(Y, E|_{Y})), \quad
\Psi(\omega_{\Tan} + du \wedge \omega_{\Nor}) = (\omega_{\Tan}, \hspace{0.1 cm} \omega_{\Nor}).
\end{equation}

\vspace{0.2 cm}

\noindent
Using the product structure we can
induce a flat connection $\nabla^{Y} : \Omega^{\bullet}(Y, E|_{Y}) \rightarrow \Omega^{\bullet + 1}(Y, E|_{Y})$ from $\nabla$ and a Hodge
star operator $\ast_{Y} : \Omega^{\bullet}(Y, E|_{Y}) \rightarrow
\Omega^{m-1-\bullet}(Y, E|_{Y})$ from $\ast_{M}$.
We define two maps $\beta$, $\Gamma^{Y}$ by

\vspace{0.2 cm}

\begin{equation}\label{E:1.4}
 \begin{aligned}
  \beta  & :  \Omega^{p}(Y, E|_{Y}) \rightarrow  \Omega^{p}(Y, E|_{Y}), \quad \beta(\omega) = (-1)^{p} \omega \\
  \Gamma^{Y}  & :  \Omega^{p}(Y, E|_{Y}) \rightarrow  \Omega^{m-1-p}(Y,
E|_{Y}),\quad \Gamma^{Y}(\omega) = i^{r-1} (-1)^{\frac{p(p+1)}{2}}
\ast_{Y} \omega.
\end{aligned}
\end{equation}

\vspace{0.2 cm}

\noindent
It is straightforward that

\begin{equation}\label{E:1.5}
\beta^{2} = \Id,  \qquad  \Gamma^{Y} \Gamma^{Y} = \Id.
\end{equation}

\noindent
Simple computation shows that

\begin{equation}\label{E:1.6}
 \Gamma = i \beta \Gamma^{Y} \left( \begin{array}{clcr} 0 & -1 \\ 1 & 0 \end{array} \right), \qquad
\nabla = \left( \begin{array}{clcr} 0 & 0 \\ 1 & 0 \end{array} \right) \nabla_{\partial u} +
\left( \begin{array}{clcr} 1 & 0 \\ 0 & -1 \end{array} \right) \nabla^{Y}.
\end{equation}

\noindent
Hence the odd signature operator $\B$ is expressed, under the isomorphism (\ref{E:1.3}), by

\begin{equation}\label{E:1.8}
 \B = - i \beta \Gamma^{Y} \left\{ \left( \begin{array}{clcr} 1 & 0 \\
0 & 1 \end{array} \right) \nabla_{\partial u} + \left( \begin{array}{clcr}  0 & -1
\\ -1 & 0 \end{array} \right) \left( \nabla^{Y} + \Gamma^{Y} \nabla^{Y} \Gamma^{Y} \right) \right\}.
\end{equation}

\noindent
We denote

\begin{equation}  \label{E:1.9}
{\mathcal \gamma} := - i \beta \Gamma^{Y} \left( \begin{array}{clcr} 1 & 0 \\ 0 & 1 \end{array} \right), \qquad
{\mathcal A} := \left( \begin{array}{clcr}  0 & -1 \\ -1 & 0 \end{array} \right) \left( \nabla^{Y} + \Gamma^{Y} \nabla^{Y} \Gamma^{Y} \right)
\end{equation}

\noindent
so that $\B$ has the form of

\noindent
\begin{equation}  \label{E:1.10}
 \B =  {\mathcal \gamma} \left( \partial_{u} + {\mathcal A} \right) \qquad \text{with} \quad {\mathcal \gamma}^{2} = - \Id, \quad
{\mathcal \gamma} {\mathcal A} = - {\mathcal A} {\mathcal \gamma}.
\end{equation}

\noindent
Since $\nabla_{\partial u} \nabla^{Y} = \nabla^{Y} \nabla_{\partial u}$, we have

\begin{equation}\label{E:1.11}
\B^{2} = - \left( \begin{array}{clcr}  1 & 0 \\ 0 & 1 \end{array} \right) \nabla_{\partial u}^{2} +
\left( \begin{array}{clcr}  1 & 0 \\ 0 & 1 \end{array} \right) \left( \nabla^{Y} + \Gamma^{Y} \nabla^{Y} \Gamma^{Y} \right)^{2} =
\left( - \nabla_{\partial u}^{2} + \B_{Y}^{2} \right) \left( \begin{array}{clcr}  1 & 0 \\ 0 & 1 \end{array} \right),
\end{equation}

\noindent
where
$$
\B_{Y} = \Gamma^{Y} \nabla^{Y} + \nabla^{Y} \Gamma^{Y}.
$$

We next choose a Hermitian inner product $h^{E}$.
All through this paper we assume that $\nabla$ is a Hermitian connection with respect to $h^{E}$,
which means that $\nabla$ is compatible with $h^{E}$, {\it i.e.}
for any $\phi$, $\psi \in C^{\infty}(E)$,

$$
d h^{E}(\phi, \psi) = h^{E}(\nabla \phi, \psi) + h^{E}(\phi, \nabla \psi).
$$

\noindent
The Green formula for $\B$ is given as follows (cf. [14]).

\vspace{0.2 cm}

\begin{lemma} \label{Lemma:1.1}
(1) For $\phi \in \Omega^{q}(M, E)$, $\psi \in \Omega^{m-q}(M,E)$,
$\hspace{0.2 cm}  \langle \Gamma \phi, \hspace{0.1 cm} \psi \rangle_{M} \hspace{0.1 cm}
= \hspace{0.1 cm} \langle \phi, \hspace{0.1 cm} \Gamma \psi \rangle_{M}$.   \newline
(2) For $\phi \in \Omega^{q}(M, E)$, $\psi \in \Omega^{q+1}(M,E)$,
$$
\langle \nabla \phi, \hspace{0.1 cm} \psi \rangle_{M} \hspace{0.1 cm} = \hspace{0.1 cm}
\langle \phi, \hspace{0.1 cm} \Gamma \nabla \Gamma \psi \rangle_{M} \hspace{0.1 cm} - \hspace{0.1 cm}
\langle \phi_{\Tan}|_{Y}, \hspace{0.1 cm} \psi_{\Nor}|_{Y} \rangle_{Y}.
$$
(3) For $\phi$, $\psi \in \Omega^{\even}(M, E)$ or $\Omega^{\odd}(M,E)$,
$$
\langle \B \phi, \hspace{0.1 cm} \psi \rangle_{M} \hspace{0.1 cm} - \hspace{0.1 cm} \langle \phi, \hspace{0.1 cm} \B \psi \rangle_{M}  = \hspace{0.1 cm}
- \langle \phi_{\Tan}|_{Y}, \hspace{0.1 cm} i \beta \Gamma^{Y} (\psi_{\Tan}|_{Y}) \rangle_{Y} \hspace{0.1 cm} -
\langle \phi_{\Nor}|_{Y}, \hspace{0.1 cm} i \beta \Gamma^{Y} (\psi_{\Nor}|_{Y}) \rangle_{Y}
\hspace{0.1 cm} = \hspace{0.1 cm} \langle \phi|_{Y}, \hspace{0.1 cm} ( {\mathcal \gamma} \psi)|_{Y} \rangle_{Y}.
$$
\end{lemma}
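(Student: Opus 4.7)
The plan is to establish the three parts in sequence, with (1) providing the self-adjointness of $\Gamma$ that drives (2) and (3). For part (1), I would begin from the basic Hodge-star identity $\ast_M \ast_M = (-1)^{q(m-q)}\Id$ on $\Omega^q(M,E)$, which gives $\langle\ast_M\phi,\psi\rangle_M = (-1)^{q(m-q)}\langle\phi,\ast_M\psi\rangle_M$ for $\phi\in\Omega^q$, $\psi\in\Omega^{m-q}$. Substituting (\ref{E:1.1}) and noting that the Hermitian pairing is antilinear in the second slot (so $i^r$ pulled out becomes $\overline{i^r} = (-1)^r i^r$), the identity $\langle\Gamma\phi,\psi\rangle_M = \langle\phi,\Gamma\psi\rangle_M$ reduces to the mod-$2$ congruence
\begin{equation*}
\frac{q(q+1)}{2} + q(m-q) \;\equiv\; r + \frac{(m-q)(m-q+1)}{2} \pmod{2}.
\end{equation*}
Writing $p = m-q$ with $m = 2r-1$, a short parity computation shows the difference of the two sides is $r(q-p-1) + qp$, which is even because $q-p = 2q-m$ is odd and $q(q+1)$ is even. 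Thus the phase factors in (\ref{E:1.1}) were chosen precisely so that $\Gamma$ is simultaneously self-adjoint and squares to $\Id$.

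For part (2), the essential input is that $\nabla$ is compatible with $h^E$, which extends the pointwise Leibniz rule to a global Stokes identity. Integrating the total exterior derivative of the natural sesquilinear pairing of $\phi$ with $\Gamma\psi$ over $M$ and applying part (1) produces $\langle\nabla\phi,\psi\rangle_M - \langle\phi,\Gamma\nabla\Gamma\psi\rangle_M$ on one side and a boundary integral on $Y$ on the other. Using the product-structure decomposition $\omega = \omega_\Tan + du\wedge\omega_\Nor$, only the $du$-wedge component of the integrand survives restriction to $Y$, and an elementary sign check (fixed by (\ref{E:1.6})) identifies this boundary integral with $-\langle\phi_\Tan|_Y,\psi_\Nor|_Y\rangle_Y$. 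Equivalently, $\Gamma\nabla\Gamma$ is the formal adjoint of $\nabla$ on the interior (which is exactly what the phase choice in $\Gamma$ was designed to ensure), and the boundary term is the standard Stokes remainder.

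For part (3), write $\B = \Gamma\nabla + \nabla\Gamma$ and apply (1) and (2) to each summand:
\begin{align*}
\langle\Gamma\nabla\phi,\psi\rangle_M &= \langle\nabla\phi,\Gamma\psi\rangle_M = \langle\phi,\Gamma\nabla\psi\rangle_M - \langle\phi_\Tan,(\Gamma\psi)_\Nor\rangle_Y,\\
\langle\nabla\Gamma\phi,\psi\rangle_M &= \langle\Gamma\phi,\Gamma\nabla\Gamma\psi\rangle_M - \langle(\Gamma\phi)_\Tan,\psi_\Nor\rangle_Y = \langle\phi,\nabla\Gamma\psi\rangle_M - \langle(\Gamma\phi)_\Tan,\psi_\Nor\rangle_Y,
\end{align*}
using $\Gamma^2 = \Id$. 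Subtracting $\langle\phi,\B\psi\rangle_M$ gives $\langle\B\phi,\psi\rangle_M - \langle\phi,\B\psi\rangle_M = -\langle\phi_\Tan,(\Gamma\psi)_\Nor\rangle_Y - \langle(\Gamma\phi)_\Tan,\psi_\Nor\rangle_Y$. By (\ref{E:1.6}), $(\Gamma\omega)_\Nor = i\beta\Gamma^Y\omega_\Tan$ and $(\Gamma\omega)_\Tan = -i\beta\Gamma^Y\omega_\Nor$, so the two pieces become $-\langle\phi_\Tan, i\beta\Gamma^Y\psi_\Tan\rangle_Y$ and $\langle i\beta\Gamma^Y\phi_\Nor,\psi_\Nor\rangle_Y$. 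Since $\Gamma^Y$ is self-adjoint (by the analogue of (1) on $Y$), $\beta$ is real, and the scalar $i$ is antilinear in the second slot, the operator $i\beta\Gamma^Y$ is skew-adjoint, so the second piece rewrites as $-\langle\phi_\Nor, i\beta\Gamma^Y\psi_\Nor\rangle_Y$. This produces the middle expression in (3), and the final equality is simply the definition (\ref{E:1.9}) of $\gamma$. The main obstacle throughout is the sign bookkeeping in (1); once $\Gamma$ and $\Gamma^Y$ are known to be self-adjoint, the rest reduces to integration by parts and formal manipulations.
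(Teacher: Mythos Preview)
Your argument is correct. The paper does not actually prove Lemma~\ref{Lemma:1.1}; it simply records the Green formula and refers to the earlier paper [14], adding only the remark that the boundary sign flips because here $\partial u$ is the \emph{inward} normal. So there is no ``paper's proof'' to compare against beyond that citation, and what you have written is exactly the kind of direct verification one would supply: the parity check that makes $\Gamma$ (and $\Gamma^{Y}$) self-adjoint, Stokes' theorem for the Hermitian connection to get (2), and then the decomposition $\B=\Gamma\nabla+\nabla\Gamma$ together with (\ref{E:1.6}) to read off the boundary term in (3).

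One small wording slip: in your parity argument for (1) you conclude that $r(q-p-1)+qp$ is even ``because $q-p=2q-m$ is odd and $q(q+1)$ is even.'' The first clause handles $r(q-p-1)$, but the second is a non sequitur for $qp$. What you want is simply that $q-p$ odd forces $q$ and $p=m-q$ to have opposite parity, so $qp$ is even. This does not affect the validity of the proof.
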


\vspace{0.2 cm}

\noindent
{\it Remark} : In the assertions (2) and (3) the signs on the inner products on $Y$ are different from those in [14] because in [14]
$\partial u$ is an outward unit normal vector field.

\vspace{0.2 cm}

We note that $\B_{Y}$ is a self-adjoint elliptic operator on $Y$.
Putting ${\mathcal H}^{\bullet}(Y, E|_{Y}) := \Ker \B^{2}_{Y}$, ${\mathcal H}^{\bullet}(Y, E|_{Y})$ is a finite dimensional vector space and we have
$$
\Omega^{\bullet}(Y, E|_{Y}) = \Imm \nabla^{Y} \oplus \Imm \Gamma^{Y} \nabla^{Y} \Gamma^{Y} \oplus {\mathcal H}^{\bullet}(Y, E|_{Y}).
$$

\noindent
If $\nabla \phi = \Gamma \nabla \Gamma \phi = 0$ for $\phi \in \Omega^{\bullet}(M, E)$, simple computation shows that $\phi$ is expressed,
near the boundary $Y$, by

\begin{equation} \label{E:1.12}
\phi = \nabla^{Y} \phi_{\Tan} + \phi_{\Tan, h} + du \wedge ( \Gamma^{Y} \nabla^{Y} \Gamma^{Y} \phi_{\Nor} + \phi_{\Nor, h}), \qquad
\phi_{\Tan, h}, \hspace{0.1 cm} \phi_{\Nor, h} \in {\mathcal H}^{\bullet}(Y, E|_{Y}).
\end{equation}

\noindent
We define ${\mathcal K}$ by

\begin{equation} \label{E:1.13}
{\mathcal K} := \{ \phi_{\Tan, h} \in {\mathcal H}^{\bullet}(Y, E|_{Y}) \mid \nabla \phi = \Gamma \nabla \Gamma \phi = 0 \},
\end{equation}

\noindent
where $\phi$ has the form (\ref{E:1.12}).
If $\phi$ satisfies $\nabla \phi = \Gamma \nabla \Gamma \phi = 0$, so is $\Gamma \phi$ and hence

\begin{equation} \label{E:2.47}
\Gamma^{Y} {\mathcal K} = \{ \phi_{\Nor, h} \in {\mathcal H}^{\bullet}(Y, E|_{Y}) \mid \nabla \phi = \Gamma \nabla \Gamma \phi = 0 \},
\end{equation}

\noindent
where $\phi$ has the form (\ref{E:1.12}).
The second assertion in Lemma \ref{Lemma:1.1} shows that
${\mathcal K}$ is perpendicular to $\Gamma^{Y} {\mathcal K}$.
We then have the following decomposition (cf. Corollary 8.4 in [17], Lemma 2.4 in [14]).

\begin{equation}  \label{E:1.144}
{\mathcal K} \oplus \Gamma^{Y} {\mathcal K} = {\mathcal H}^{\bullet}(Y, E|_{Y}),
\end{equation}

\noindent
which shows that
$( {\mathcal H}^{\bullet}(Y, E|_{Y}), \hspace{0.1 cm} \langle \hspace{0.1 cm}, \hspace{0.1 cm} \rangle_{Y}, \hspace{0.1 cm} - i \beta \Gamma^{Y} )$
is a symplectic vector space with Lagrangian subspaces ${\mathcal K}$ and $\Gamma^{Y} {\mathcal K}$.
We denote by

\begin{equation}   \label{E:1.1555}
{\mathcal L}_{0} = \left( \begin{array}{clcr} {\mathcal K} \\ {\mathcal K} \end{array} \right), \qquad
{\mathcal L}_{1} = \left( \begin{array}{clcr} \Gamma^{Y} {\mathcal K} \\ \Gamma^{Y} {\mathcal K} \end{array} \right).
\end{equation}

\vspace{0.2 cm}

We next define the orthogonal projections
${\mathcal P}_{-, {\mathcal L}_{0}}$, ${\mathcal P}_{+, {\mathcal L}_{1}} :
\Omega^{\bullet}(Y, E|_{Y}) \oplus \Omega^{\bullet}(Y, E|_{Y}) \rightarrow \Omega^{\bullet}(Y, E|_{Y}) \oplus \Omega^{\bullet}(Y, E|_{Y})$ by
\begin{equation} \label{E:1.112}
\Imm {\mathcal P}_{-, {\mathcal L}_{0}} \hspace{0.1 cm} = \hspace{0.1 cm}
\left( \begin{array}{clcr} \Imm \nabla^{Y} \oplus {\mathcal K} \\ \Imm \nabla^{Y} \oplus {\mathcal K} \end{array} \right), \qquad
\Imm {\mathcal P}_{+, {\mathcal L}_{1}} \hspace{0.1 cm} = \hspace{0.1 cm}
\left( \begin{array}{clcr} \Imm \Gamma^{Y} \nabla^{Y} \Gamma^{Y} \oplus \Gamma^{Y} {\mathcal K} \\
\Imm \Gamma^{Y} \nabla^{Y} \Gamma^{Y} \oplus \Gamma^{Y} {\mathcal K} \end{array} \right) .
\end{equation}

\noindent
Then ${\mathcal P}_{-, {\mathcal L}_{0}}$, ${\mathcal P}_{+, {\mathcal L}_{1}}$ are pseudodifferential operators and give well-posed
boundary conditions for $\B$ and the refined analytic torsion.
We denote by $\B_{{\mathcal P}_{-, {\mathcal L}_{0}}}$ and $\B^{2}_{q, {\mathcal P}_{-, {\mathcal L}_{0}}}$ the realizations of $\B$ and $\B^{2}_{q}$
with respect to ${\mathcal P}_{-, {\mathcal L}_{0}}$, {\it i.e.}

\begin{eqnarray}\label{E:2.17}
\Dom \left( \B_{{\mathcal P}_{-, {\mathcal L}_{0}}} \right) & = &
\left\{ \psi \in \Omega^{\bullet}(M, E) \mid {\mathcal P}_{-, {\mathcal L}_{0}} \left( \psi|_{Y} \right) = 0 \right\},  \nonumber\\
\Dom \left( \B^{2}_{q, {\mathcal P}_{-, {\mathcal L}_{0}}} \right) & = &
\left\{ \psi \in \Omega^{q}(M, E) \mid {\mathcal P}_{-, {\mathcal L}_{0}} \left( \psi|_{Y} \right) = 0, \hspace{0.1 cm}
{\mathcal P}_{-, {\mathcal L}_{0}} \left( (\B \psi)|_{Y} \right) = 0 \right\}.
\end{eqnarray}

\noindent
We define $\B_{{\mathcal P}_{+, {\mathcal L}_{1}}}$, $\B^{2}_{q, {\mathcal P}_{+, {\mathcal L}_{1}}}$, $\B^{2}_{q, \Abs}$, $\B^{2}_{q, \rel}$
and $\B_{\Pi_{>}}$, $\B_{\Pi_{<}}$ (see Section 3) in the similar way.
The following result is straightforward (Lemma 2.11 in [14]).

\vspace{0.2 cm}

\begin{lemma}  \label{Lemma:1.2}
$$
\Ker \B^{2}_{q, {\mathcal P}_{-, {\mathcal L}_{0}}} = \ker \B^{2}_{q, \rel} = H^{q}(M, Y ; E), \qquad
\Ker \B^{2}_{q, {\mathcal P}_{+, {\mathcal L}_{1}}} = \ker \B^{2}_{q, \Abs} = H^{q}(M ; E).
$$
\end{lemma}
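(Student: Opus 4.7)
The plan is to first reduce $\Ker \B^{2}_{q, {\mathcal P}_{-, {\mathcal L}_{0}}}$ to the set of $q$-forms satisfying $\B\psi=0$, then use the decomposition (\ref{E:1.12})--(\ref{E:2.47}) to convert the projection boundary condition into the relative Dirichlet condition $\psi_{\Tan}|_{Y} = 0$, and finally invoke the classical Hodge theorem for $\B^{2}$ with relative boundary conditions to identify the result with $H^{q}(M,Y;E)$. The ${\mathcal P}_{+, {\mathcal L}_{1}}$/absolute case is entirely parallel, with the roles of $(\Imm \nabla^{Y}, {\mathcal K})$ and $(\Imm \Gamma^{Y} \nabla^{Y} \Gamma^{Y}, \Gamma^{Y} {\mathcal K})$ interchanged, yielding $\psi_{\Nor}|_{Y} = 0$.

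For the reduction step I take $\psi \in \Ker \B^{2}_{q, {\mathcal P}_{-, {\mathcal L}_{0}}}$ and apply Lemma \ref{Lemma:1.1}(3) to the pair $(\psi, \B\psi)$ --- the parity hypothesis holds because $\B$ preserves form-parity on this odd-dimensional manifold --- to obtain
$$
\|\B\psi\|^{2}_{M} = \langle \B^{2}\psi, \psi \rangle_{M} + \langle \psi|_{Y}, (\gamma\,\B\psi)|_{Y} \rangle_{Y} = \langle \psi|_{Y}, (\gamma\,\B\psi)|_{Y} \rangle_{Y}.
$$
By (\ref{E:2.17}) both $\psi|_{Y}$ and $(\B\psi)|_{Y}$ lie in $\Ker {\mathcal P}_{-, {\mathcal L}_{0}} = \Imm {\mathcal P}_{+, {\mathcal L}_{1}}$. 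From (\ref{E:1.9}) the map $\gamma$ acts as $-i\beta\Gamma^{Y}$ on each diagonal component, and since $\Gamma^{Y}$ exchanges $\Imm \nabla^{Y} \leftrightarrow \Imm \Gamma^{Y} \nabla^{Y} \Gamma^{Y}$ and ${\mathcal K} \leftrightarrow \Gamma^{Y} {\mathcal K}$, a summand-wise inspection of (\ref{E:1.112}) shows that $\gamma$ maps $\Imm {\mathcal P}_{+, {\mathcal L}_{1}}$ onto $\Imm {\mathcal P}_{-, {\mathcal L}_{0}}$, the orthogonal complement of $\Imm {\mathcal P}_{+, {\mathcal L}_{1}}$. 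The boundary term therefore vanishes, $\B\psi = 0$, and splitting $\B\psi = \Gamma\nabla\psi + \nabla\Gamma\psi$ by form-degree (the two summands live in $\Omega^{m-q-1}$ and $\Omega^{m-q+1}$) forces $\nabla\psi = 0$ and $\Gamma \nabla \Gamma \psi = 0$.

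Now (\ref{E:1.12}) combined with (\ref{E:2.47}) gives $\psi_{\Tan}|_{Y} \in \Imm \nabla^{Y} \oplus {\mathcal K}$ and $\psi_{\Nor}|_{Y} \in \Imm \Gamma^{Y} \nabla^{Y} \Gamma^{Y} \oplus \Gamma^{Y} {\mathcal K}$. The boundary condition ${\mathcal P}_{-, {\mathcal L}_{0}}(\psi|_{Y}) = 0$ places both components in the complementary summand $\Imm \Gamma^{Y} \nabla^{Y} \Gamma^{Y} \oplus \Gamma^{Y} {\mathcal K}$, so intersecting on the tangential part yields $\psi_{\Tan}|_{Y} = 0$, the relative boundary condition; together with $\nabla\psi = \Gamma \nabla \Gamma \psi = 0$ this places $\psi$ in $\ker \B^{2}_{q, \rel}$. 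Conversely, any $\psi \in \ker \B^{2}_{q, \rel}$ has $\B\psi = 0$ (so the ${\mathcal P}_{-, {\mathcal L}_{0}}((\B\psi)|_{Y}) = 0$ condition in (\ref{E:2.17}) is trivial), $\psi_{\Tan}|_{Y} = 0$ trivially, and $\psi_{\Nor}|_{Y} \in \Imm \Gamma^{Y} \nabla^{Y} \Gamma^{Y} \oplus \Gamma^{Y} {\mathcal K}$ by (\ref{E:1.12})/(\ref{E:2.47}), so ${\mathcal P}_{-, {\mathcal L}_{0}}(\psi|_{Y}) = 0$ holds. The final identification $\ker \B^{2}_{q, \rel} = H^{q}(M, Y; E)$ is the standard Hodge theorem with relative boundary conditions.

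The hard part will be the summand-wise verification in the reduction step that $\gamma$ swaps $\Imm {\mathcal P}_{-, {\mathcal L}_{0}}$ with $\Imm {\mathcal P}_{+, {\mathcal L}_{1}}$, since this is exactly what forces the Green's formula boundary term to vanish; beyond that, the proof is a direct assembly of the Green formula, the boundary-form decomposition (\ref{E:1.12}), and the definition (\ref{E:1.112}) of the projections.
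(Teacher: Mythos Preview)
Your argument is correct and is precisely the route one would expect: the paper itself does not supply a proof here but simply cites Lemma~2.11 of~[14], where the result is obtained by the same mechanism you describe---using the Green formula (Lemma~\ref{Lemma:1.1}(3)) together with the fact that $\gamma$ interchanges $\Imm {\mathcal P}_{-,{\mathcal L}_{0}}$ and $\Imm {\mathcal P}_{+,{\mathcal L}_{1}}$ to force $\B\psi=0$, and then reading off the relative (resp.\ absolute) condition from the harmonic boundary decomposition (\ref{E:1.12})--(\ref{E:2.47}). One cosmetic remark: in your Green-formula line the boundary term should be $-\langle (\B\psi)|_{Y},(\gamma\psi)|_{Y}\rangle_{Y}$ rather than $+\langle \psi|_{Y},(\gamma\B\psi)|_{Y}\rangle_{Y}$, but since $\gamma$ is skew-adjoint and swaps the two complementary Lagrangian images, the term vanishes in either form and the conclusion is unaffected.
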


\vspace{0.2 cm}

We choose an Agmon angle $\theta$ by $- \frac{\pi}{2} < \theta < 0$.
For ${\frak D} = {\mathcal P}_{-, {\mathcal L}_{0}}$ or ${\mathcal P}_{+, {\mathcal L}_{1}}$ we define the zeta function $\zeta_{\B^{2}_{q, {\frak D}}}(s)$
and eta function $\eta_{\B_{\even, {\frak D}}}(s)$ by

\begin{eqnarray*}
\zeta_{\B^{2}_{q, {\frak D}}}(s) & = & \frac{1}{\Gamma(s)} \int_{0}^{\infty} t^{s-1}
\left( \Tr e^{- t \B^{2}_{q, {\frak D}}} - \Dim \Ker \B^{2}_{q, {\frak D}} \right) dt \\
\eta_{\B_{\even, {\frak D}}}(s) & = & \frac{1}{\Gamma(\frac{s+1}{2})} \int_{0}^{\infty} t^{\frac{s-1}{2}}
\Tr \left( \B e^{- t \B^{2}_{\even, {\frak D}}} \right) dt.
\end{eqnarray*}

\noindent
It was shown in [14] that $\zeta_{\B^{2}_{q, {\frak D}}}(s)$ and $\eta_{\B_{\even, {\frak D}}}(s)$ have regular values at $s=0$.
We define the zeta-determinant and eta-invariant by

\begin{eqnarray}
\log \Det_{2\theta} \B^{2}_{q, {\frak D}} & := & - \zeta_{\B^{2}_{q, {\frak D}}}^{\prime}(0),  \label{E:1.15} \\
\eta(\B_{\even, {\frak D}}) & := & \frac{1}{2} \left( \eta_{\B_{\even, {\frak D}}}(0) + \Dim \Ker \B_{\even, {\frak D}} \right).   \label{E:1.16}
\end{eqnarray}

\vspace{0.2 cm}

\noindent
We denote

\begin{eqnarray}  \label{E:1.188}
& & \Omega^{q}_{-}(M, E) = \Imm \nabla \cap \Omega^{q}(M, E), \qquad
\Omega^{q}_{+}(M, E) = \Imm \Gamma \nabla \Gamma \cap \Omega^{q}(M, E),   \nonumber  \\
& & \Omega^{\even}_{\pm}(M, E) = \sum_{q = \even} \Omega^{q}_{\pm}(M, E),
\end{eqnarray}

\noindent
and denote by $\B_{\even}^{\pm}$ the restriction of $\hspace{0.1 cm} \B_{\even} \hspace{0.1 cm}$ to $\hspace{0.1 cm} \Omega^{\even}_{\pm}(M, E)$.
The graded zeta-determinant $\Det_{\gr, \theta} (\B_{\even, {\frak D}})$ of $\B_{\even}$ with respect to the boundary condition ${\frak D}$
is defined by

$$
\Det_{\gr, \theta} (\B_{\even, {\frak D}}) =  \frac{\Det_{\theta} \B^{+}_{\even, {\frak D}}}
{\Det_{\theta} \left( - \B^{-}_{\even, {\frak D}} \right)} .
$$

\vspace{0.2 cm}

We next define the projections ${\widetilde {\mathcal P}}_{0}$,
${\widetilde {\mathcal P}}_{1} : \Omega^{\bullet}(Y, E|_{Y}) \oplus \Omega^{\bullet}(Y, E|_{Y}) \rightarrow
\Omega^{\bullet}(Y, E|_{Y}) \oplus \Omega^{\bullet}(Y, E|_{Y})$ as follows.
For $\phi \in \Omega^{q}(M, E)$

\vspace{0.2 cm}

$$
{\widetilde {\mathcal P}}_{0} (\phi|_{Y}) = \begin{cases} {\mathcal P}_{-, {\mathcal L}_{0}} (\phi|_{Y}) \quad
\text{if} \quad q \quad \text{is} \quad \text{even} \\
{\mathcal P}_{+, {\mathcal L}_{1}} (\phi|_{Y}) \quad \text{if} \quad q \quad \text{is} \quad \text{odd} ,
\end{cases}
\qquad
{\widetilde {\mathcal P}}_{1} (\phi|_{Y}) = \begin{cases} {\mathcal P}_{+, {\mathcal L}_{1}} (\phi|_{Y}) \quad
\text{if} \quad q \quad \text{is} \quad \text{even} \\
{\mathcal P}_{-, {\mathcal L}_{0}} (\phi|_{Y}) \quad \text{if} \quad q \quad \text{is} \quad \text{odd} .
\end{cases}
$$

\vspace{0.2 cm}

\noindent
We denote by

\begin{equation}  \label{E:1.199}
l_{q} := \Dim \Ker \B_{Y, q}^{2}, \qquad l_{q}^{+} := \Dim {\mathcal K} \cap \Ker \B_{Y, q}^{2}, \qquad \text{and} \qquad
l_{q}^{-} := \Dim \Gamma^{Y} {\mathcal K} \cap  \Ker \B_{Y, q}^{2},
\end{equation}

\vspace{0.2 cm}

\noindent
so that $l_{q} = l_{q}^{+} + l_{q}^{-}$ and $l_{q}^{-} = l_{m-1-q}^{+}$.
Simple computation shows that $\log \Det_{\gr, \theta} (\B_{\even, {\mathcal P}_{-, {\mathcal L}_{0}}})$ and
$\log \Det_{\gr, \theta} (\B_{\even, {\mathcal P}_{+, {\mathcal L}_{1}}})$ are described as follows ([14]).

\begin{eqnarray}
(1) \label{E:1.17}
\hspace{0.2 cm} \log \Det_{\gr, \theta} (\B_{\even, {\mathcal P}_{-, {\mathcal L}_{0}}}) & = &
\frac{1}{2} \sum_{q=0}^{m} (-1)^{q+1} \cdot q \cdot \log \Det_{2\theta} \B^{2}_{q, {\widetilde {\mathcal P}_{0}}}
-  i \pi \hspace{0.1 cm} \eta(\B_{\even, {\mathcal P}_{-, {\mathcal L}_{0}}}) \nonumber \\
& + & \frac{\pi i }{2} \left( \frac{1}{4} \sum_{q=0}^{m-1} \zeta_{\B_{Y, q}^{2}}(0) + \sum_{q=0}^{r-2}(r-1-q) (l_{q}^{+} - l_{q}^{-})  \right). \\
(2) \label{E:1.18}
\hspace{0.2 cm}  \log \Det_{\gr, \theta} (\B_{\even, {\mathcal P}_{+, {\mathcal L}_{1}}})  & = &
\frac{1}{2} \sum_{q=0}^{m} (-1)^{q+1} \cdot q \cdot \log \Det_{2\theta} \B^{2}_{q, {\widetilde {\mathcal P}_{1}}}
- i \pi \hspace{0.1 cm} \eta(\B_{\even, {\mathcal P}_{+, {\mathcal L}_{1}}}) \nonumber \\
& - & \frac{\pi i }{2} \left( \frac{1}{4} \sum_{q=0}^{m-1} \zeta_{\B_{Y, q}^{2}}(0)  +  \sum_{q=0}^{r-2}(r-1-q) (l_{q}^{+} - l_{q}^{-}) \right).
\end{eqnarray}

\vspace{0.2 cm}

To define the refined analytic torsion we introduce the trivial connection $\nabla^{\trivial}$ acting on the trivial bundle $M \times {\Bbb C}$
and define the trivial odd signature operator $\B^{\trivial}_{\even} : \Omega^{\even}(M, {\Bbb C}) \rightarrow \Omega^{\even}(M, {\Bbb C})$ in the same way as
(\ref{E:1.2}). The eta invariant $\eta ( \B^{\trivial}_{\even, {\mathcal P}_{-, {\mathcal L}_{0}}/{\mathcal P}_{+, {\mathcal L}_{1}} } )$
associated to $\B^{\trivial}_{\even}$
and subject to the boundary condition ${\mathcal P}_{-, {\mathcal L}_{0}}/{\mathcal P}_{+, {\mathcal L}_{1}}$  is defined in the same way
as in (\ref{E:1.16}) by simply replacing $\B_{\even, {\mathcal P}_{-, {\mathcal L}_{0}}/{\mathcal P}_{+, {\mathcal L}_{1}} }$
by $\B^{\trivial}_{\even, {\mathcal P}_{-, {\mathcal L}_{0}}/{\mathcal P}_{+, {\mathcal L}_{1}} }$.
When $\nabla$ is acyclic in the de Rham complex,
the refined analytic torsion subject to the boundary condition ${\mathcal P}_{-, {\mathcal L}_{0}}/{\mathcal P}_{+, {\mathcal L}_{1}}$ is defined
by

\begin{eqnarray}
\log T_{{\mathcal P}_{-, {\mathcal L}_{0}}} (g^{M}, \nabla) & = & \log \Det_{\gr, \theta} (\B_{\even, {\mathcal P}_{-, {\mathcal L}_{0}}}) +
\frac{\pi i}{2} (\rank E) \eta_{ \B^{\trivial}_{\even, {\mathcal P}_{-, {\mathcal L}_{0}}} } (0)  \\
\log T_{{\mathcal P}_{+, {\mathcal L}_{1}}} (g^{M}, \nabla) & = & \log \Det_{\gr, \theta} (\B_{\even, {\mathcal P}_{+, {\mathcal L}_{1}}}) +
\frac{\pi i}{2} (\rank E) \eta_{ \B^{\trivial}_{\even, {\mathcal P}_{+, {\mathcal L}_{1}}} } (0)
\end{eqnarray}

\vspace{0.2 cm}

\noindent
The refined analytic torsion on a closed manifold is defined similarly.

In this paper we are going to discuss the gluing formula
of the refined analytic torsion with respect to the boundary conditions ${\mathcal P}_{-, {\mathcal L}_{0}}$ and ${\mathcal P}_{+, {\mathcal L}_{1}}$.
For this purpose in the next two sections we are going to compare the Ray-Singer analytic torsion and eta invariant subject to the boundary
condition ${\mathcal P}_{-, {\mathcal L}_{0}}$ (or ${\mathcal P}_{+, {\mathcal L}_{1}}$) with those subject to the relative and APS boundary conditions,
respectively.

\vspace{0.3 cm}

\section{Comparison of the Ray-Singer analytic torsions}

\vspace{0.2 cm}

In this section we are going to compare the Ray-Singer analytic torsion subject to the boundary condition ${\mathcal P}_{-, {\mathcal L}_{0}}$
with the Ray-Singer analytic torsion subject to the relative boundary condition. For this purpose we are going to use the BFK-gluing formula and
the method of the adiabatic limit for stretching the cylinder part.
We recall that $(M, g^{M})$ is a compact oriented Riemannian manifold with boundary $Y$
with a collar neighbrhood $N = [0, 1) \times Y$ and $g^{M}$ is assumed to be a product metric on $N$.
We denote by $M_{1, 1} = [0, 1] \times Y$ and $M_{2} = M - N$.
To use the adiabatic limit we stretch the cylinder part $M_{1, 1}$ to the cylinder of length $r$ and
denote $M_{1, r} = [0, r] \times Y$ with the product metric and

$$
M_{r} = M_{1, r} \cup_{Y_{r}} M_{2} \quad \text{with} \hspace{0.2 cm} Y_{r} = \{ r \} \times Y.
$$

\noindent
Then we can extend the bundle $E$ and
the odd signature operator $\B$ on $M$ to $M_{r}$ in the natural way and we denote these extensions
by $E_{r}$ and $\B(r)$ ($\B = \B(1)$). We denote the restriction of $\B(r)$ to $M_{1, r}$, $M_{2}$ by $\B_{M_{1, r}}$, $\B_{M_{2}}$.
It is well known (cf. [16], [2]) that the Dirichlet boundary value problem for $\B_{q}^{2}$ on $M_{2}$ has a unique
solution, {\it i.e.} for $f + du \wedge g \in \Omega^{q}(M_{2}, E|_{M_{2}})|_{Y_{r}}$,
there exists a unique $\psi \in \Omega^{q}(M_{2}, E|_{M_{2}})$
such that

$$
\B_{q}^{2}\psi = 0, \qquad \psi|_{Y_{r}} = f + du \wedge g.
$$

\noindent
Let ${\frak D}$ be one of the following boundary conditions :
${\mathcal P}_{-, {\mathcal L}_{0}}$, ${\mathcal P}_{+, {\mathcal L}_{1}}$,
the absolute boundary condition, the relative boundary condition or the Dirichlet boundary condition.
We define the Neumann jump operators

$$
Q_{q, 1, {\frak D}}(r), \hspace{0.1 cm} Q_{q, 2} : \Omega^{q}(Y_{r}, E|_{Y_{r}}) \oplus \Omega^{q-1}(Y_{r}, E|_{Y_{r}}) \rightarrow
\Omega^{q}(Y_{r}, E|_{Y_{r}}) \oplus \Omega^{q-1}(Y_{r}, E|_{Y_{r}})
$$

\vspace{0.2 cm}

\noindent
as follows.
For $f + du \wedge g \in \Omega^{q}(Y_{r}, E|_{Y_{r}}) \oplus \Omega^{q-1}(Y_{r}, E|_{Y_{r}})$, we choose
$\phi \in \Omega^{q}(M_{1, r}, E|_{M_{1, r}})$ and $\psi \in \Omega^{q}(M_{2}, E|_{M_{2}})$ such that

\begin{equation}  \label{E:2.1}
\B_{q, M_{1, r}}^{2}\phi = 0, \qquad \B_{q, M_{2}}^{2} \psi = 0,  \qquad \phi|_{Y_{r}} = \psi|_{Y_{r}} = f + du \wedge g,
\qquad {\frak D}(\phi|_{Y_{0}}) = 0.
\end{equation}

\noindent
Then we define
$$
Q_{q, 1, {\frak D}}(r) (f) \hspace{0.1 cm} = \hspace{0.1 cm} (\nabla_{\partial_{u}} \phi)|_{Y_{r}}, \qquad
Q_{q, 2}(f) \hspace{0.1 cm} = \hspace{0.1 cm} - \hspace{0.1 cm} (\nabla_{\partial_{u}} \psi)|_{Y_{r}},
$$

\noindent
where $\partial u$ is the inward unit normal vector field on $N \subset M$.
We next define the Dirichlet-to-Neumann operator $R_{q, {\frak D}}(r)$ as follows.

$$
R_{q, {\frak D}}(r) : \Omega^{q}(Y_{r}, E|_{Y_{r}}) \oplus \Omega^{q-1}(Y_{r}, E|_{Y_{r}}) \rightarrow
\Omega^{q}(Y_{r}, E|_{Y_{r}}) \oplus \Omega^{q-1}(Y_{r}, E|_{Y_{r}})
$$

\begin{equation} \label{E:2.2}
R_{q, {\frak D}} (r) = Q_{q, 1, {\frak D}}(r) + Q_{q, 2}.
\end{equation}

\vspace{0.1 cm}

\noindent
The following lemma is well known (cf. [18]).

\begin{lemma} \label{Lemma:2.1}
(1) $R_{q, \frak D} (r)$ is a non-negative elliptic pseudodifferential operator of order $1$ and has the form of

\begin{equation} \label{E:2.3}
R_{q, \frak D}(r) \hspace{0.1 cm} = \hspace{0.1 cm}
\left( \begin{array}{clcr} 2 \sqrt{\B_{Y, q}^{2}} & 0 \\ 0 & 2 \sqrt{\B_{Y, q-1}^{2}} \end{array} \right)  \hspace{0.1 cm} +
\hspace{0.2 cm} \text{ a smoothing operator}.
\end{equation}

\noindent
(2) $\ker R_{q, \frak D} = \{ \phi|_{Y_{r}} \mid \phi \in \Ker \B_{q, {\frak D}}^{2} \}$.

\end{lemma}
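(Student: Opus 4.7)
The central idea is to exploit the product structure of $g^{M}$ in the collar of $Y_{r}$: under the isomorphism $\Psi$ of (\ref{E:1.3}), $\B_{q}^{2}$ reduces there to $-\partial_{u}^{2}+\B_{Y}^{2}$ acting diagonally, with blocks $\B_{Y,q}^{2}$ and $\B_{Y,q-1}^{2}$ on the $\omega_{\Tan}$ and $\omega_{\Nor}$ components respectively. I would handle $M_{1,r}$ and $M_{2}$ separately and then sum the two contributions.

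On each piece the Dirichlet problem for $\B_{q}^{2}$ --- supplemented on $M_{1,r}$ by ${\frak D}$ at $Y_{0}$ --- is well-posed for all admissible ${\frak D}$, so the corresponding Poisson operators are classical pseudodifferential operators of order $0$, and their normal traces $Q_{q,1,{\frak D}}(r)$, $Q_{q,2}$ are elliptic $\Psi$DOs of order $1$; this yields the order claim in (1). For the leading symbol in (\ref{E:2.3}) one separates variables on each eigenspace of $\B_{Y}^{2}$ with eigenvalue $\lambda^{2}>0$: the solution of $(-\partial_{u}^{2}+\lambda^{2})\phi=0$ with $\phi|_{Y_{r}}=f$ is $e^{\lambda(u-r)}f$ on $M_{1,r}$ up to an $O(e^{-\lambda r})$ correction enforcing ${\frak D}$ at $Y_{0}$, and $e^{-\lambda(u-r)}f$ on $M_{2}$ modulo a correction smoothing in $f$. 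Differentiating at $u=r$ gives $\partial_{u}\phi|_{Y_{r}}\sim\lambda f$ and $-\partial_{u}\psi|_{Y_{r}}\sim\lambda f$, so $R_{q,{\frak D}}(r)f\sim 2\sqrt{\B_{Y}^{2}}f$ modulo smoothing, which after invoking the block structure above gives (\ref{E:2.3}). Non-negativity follows from Green's identity (Lemma~\ref{Lemma:1.1}): for $\phi$ as in (\ref{E:2.1}) one obtains $\langle Q_{q,1,{\frak D}}(r)f,f\rangle_{Y_{r}}=\|\nabla\phi\|^{2}+\|\Gamma\nabla\Gamma\phi\|^{2}\geq 0$, the $Y_{0}$ boundary term vanishing because ${\frak D}$ is a Lagrangian (equivalently, self-adjoint well-posed) boundary condition for $\B$; the analogous identity gives $\langle Q_{q,2}f,f\rangle_{Y_{r}}\geq 0$.

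For assertion (2), if $R_{q,{\frak D}}(r)f=0$ then $(\nabla_{\partial u}\phi)|_{Y_{r}}=(\nabla_{\partial u}\psi)|_{Y_{r}}$; combined with $\phi|_{Y_{r}}=\psi|_{Y_{r}}=f$, the section $\omega$ of $E_{r}$ equal to $\phi$ on $M_{1,r}$ and to $\psi$ on $M_{2}$ is $C^{1}$ across $Y_{r}$ and satisfies $\B_{q}^{2}\omega=0$ distributionally. Elliptic regularity then lifts $\omega$ to a smooth element of $\Ker\B_{q,{\frak D}}^{2}$ whose restriction to $Y_{r}$ equals $f$. Conversely, any $\omega\in\Ker\B_{q,{\frak D}}^{2}$ restricts to a pair $\phi,\psi$ on the two pieces satisfying (\ref{E:2.1}) and having matching normal derivatives at $Y_{r}$, so $\omega|_{Y_{r}}\in\ker R_{q,{\frak D}}(r)$.

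The main obstacle is the careful control of the smoothing remainders, particularly for the nonstandard boundary conditions ${\mathcal P}_{-,{\mathcal L}_{0}}$ and ${\mathcal P}_{+,{\mathcal L}_{1}}$ introduced in [14]. For the classical choices ${\frak D}\in\{\rel,\Abs,\mathrm{Dirichlet}\}$ this is the content of the BFK construction in [18]. For the new projections one has to verify that they differ from the positive/negative spectral projections of the tangential boundary operator by smoothing operators, so that their Poisson extensions inherit the same parametrix structure as those of APS-type conditions and the resulting corrections to the principal symbol in (\ref{E:2.3}) are indeed smoothing.
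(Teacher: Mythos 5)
The paper does not actually prove Lemma~\ref{Lemma:2.1}: it labels it ``well known'' and refers to [18], so there is no in-text argument to compare against. Your overall plan --- separation of variables in the product collar to read off the leading block symbol, and BFK-style control of the remainders --- is the standard route and is essentially what [18] does. Assertion (2) you argue correctly: vanishing of $R_{q,\frak D}(r)f$ is exactly the $C^1$-matching condition across $Y_r$, and elliptic regularity plus the boundary condition at $Y_0$ upgrades the glued section to an element of $\Ker\B^2_{q,\frak D}(r)$, and conversely.

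The non-negativity step, however, contains a real error. You assert
$\langle Q_{q,1,\frak D}(r)f,f\rangle_{Y_r}=\|\nabla\phi\|^2+\|\Gamma\nabla\Gamma\phi\|^2\geq 0$
and, ``analogously,'' $\langle Q_{q,2}f,f\rangle_{Y_r}\geq 0$, i.e. that the two Neumann jump operators are \emph{separately} non-negative. That is not what Green's formula gives here. The conormal derivative naturally associated to the factorization $\B^2=\B^\ast\B$ is not $\nabla_{\partial_u}$ but $\gamma^{-1}\B=\partial_u+\mathcal A$ (cf.\ Lemma~\ref{Lemma:1.1}(3) and (\ref{E:1.10})). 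Concretely, for $\psi$ on $M_2$ with $\B^2\psi=0$, $\psi|_{Y_r}=f$, the paper's own computation (\ref{E:2.55})--(\ref{E:2.56}) yields
\begin{equation*}
\langle Q_{q,2}f,f\rangle_{Y_r}\;=\;\|\B\psi\|^2_{M_2}+\langle\mathcal A f,f\rangle_{Y_r},
\end{equation*}
and the analogous identity on $M_{1,r}$ (with the sign of the $Y_r$ boundary term flipped because $\partial_u$ is outward there, and the $Y_0$ term killed by the Lagrangian condition $\frak D$) gives
\begin{equation*}
\langle Q_{q,1,\frak D}(r)f,f\rangle_{Y_r}\;=\;\|\B\phi\|^2_{M_{1,r}}-\langle\mathcal A f,f\rangle_{Y_r}.
\end{equation*}
Since $\mathcal A$ has both signs in its spectrum, neither $Q_{q,1,\frak D}(r)$ nor $Q_{q,2}$ is non-negative by itself (the non-negative objects are $Q_{q,1,\frak D}+\mathcal A$ and $Q_{q,2}-\mathcal A$; this is exactly why the paper works with $Q_{q,2}+|\mathcal A|$ in Lemma~\ref{Lemma:2.99} rather than $Q_{q,2}$ alone). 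What you can conclude is that the $\mathcal A$-terms cancel in the sum,
\begin{equation*}
\langle R_{q,\frak D}(r)f,f\rangle_{Y_r}\;=\;\|\B\phi\|^2_{M_{1,r}}+\|\B\psi\|^2_{M_2}\;\geq 0,
\end{equation*}
which is the non-negativity actually claimed in the lemma. Also note that writing $\|\B\phi\|^2=\|\nabla\phi\|^2+\|\Gamma\nabla\Gamma\phi\|^2$ is itself only correct up to boundary cross-terms on a manifold with boundary (Lemma~\ref{Lemma:1.1}(2)), so even the ``energy'' you wrote is not literally $\|\B\phi\|^2$. You should replace that paragraph by the cancellation argument above.

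Finally, for (\ref{E:2.3}) you should be explicit that the remainder being \emph{smoothing} (not merely lower order) requires the product metric in the collar, so that both $Q_{q,1,\frak D}(r)$ and $Q_{q,2}$ agree with the model cylinder operator up to smoothing; your remark about ${\mathcal P}_{-,\mathcal L_0}$, ${\mathcal P}_{+,\mathcal L_1}$ differing from the spectral projections only by finite-rank (hence smoothing) operators is the right thing to invoke there.
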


\vspace{0.2 cm}

We denote by $\B^{2}_{q, M_{1, r}, {\frak D}, D}$ ($\B^{2}_{q, M_{2}, D}$)
the restriction of $\B^{2}_{q}(r)$ to $M_{1, r}$
($M_{2}$) subject to the boundary condition ${\frak D}$ on $Y_{0}$ and the Dirichlet boundary condition on $Y_{r}$
(the Dirichlet boundary condition on $Y_{r}$).
We denote by $\B^{2}_{q, {\frak D}}(r)$ the operator $\B_{q}^{2}(r)$ on $M_{r}$ subject to the boundary condition ${\frak D}$
on $Y_{0}$. Then Lemma \ref{Lemma:1.2} shows that $\Dim \Ker \B^{2}_{q, {\frak D}}(r)$ is a topological invariant.
Let $\Dim \Ker \B^{2}_{q, {\frak D}}(r) = k$ and
$\{\varphi_{1}, \cdots, \varphi_{k} \}$ be an orthonormal basis of $\Ker \B^{2}_{q, {\frak D}}(r)$.
We define a positive definite $k \times k$ Hermitian matrix $A_{q, {\frak D}}(r)$ by
$$
A_{q, {\frak D}}(r) = (a_{ij}), \qquad a_{ij} = \langle \varphi_{i}|_{Y_{0}}, \varphi_{j}|_{Y_{0}} \rangle_{Y_{0}}.
$$
Then the BFK-gluing formula ([8], [18], [19]) is described as follows.
Setting $l_{q} = \Dim \Ker \B_{Y, q}^{2}$, we have

\begin{eqnarray}
\log \Det_{2 \theta} \B^{2}_{q, {\frak D}}(r)
& = & \log \Det_{2 \theta} \B^{2}_{q, M_{1, r}, {\frak D}, D} + \log \Det_{2 \theta} \B^{2}_{q, M_{2}, D}
  + \log \Det_{2 \theta} R_{q, {\frak D}}(r) \nonumber  \\
&  & - \log 2 \cdot (\zeta_{\B^{2}_{Y, q}}(0) + \zeta_{\B^{2}_{Y, q-1}}(0) + l_{q} + l_{q-1})  - \log \ddet A_{q, {\frak D}}(r).
\end{eqnarray}

\vspace{0.2 cm}

\noindent
The above equality can be rewritten as follows.

\begin{corollary}  \label{Corollary:2.3}
\begin{eqnarray*}
(1) & & \log \Det_{2 \theta} \B^{2}_{q, {\mathcal P}_{-, {\mathcal L}_{0}}}(r) \hspace{0.1 cm}  = \hspace{0.1 cm}
 \log \Det_{2 \theta} \B^{2}_{q, M_{1, r}, {\mathcal P}_{-, {\mathcal L}_{0}}, D} + \log \Det_{2 \theta} \B^{2}_{q, M_{2}, D} +
\log \Det_{2 \theta} R_{q, {\mathcal P}_{-, {\mathcal L}_{0}}}(r)  \\
& & \hspace{4.5 cm} - \log 2 \cdot (\zeta_{\B^{2}_{Y, q}}(0) + \zeta_{\B^{2}_{Y, q-1}}(0) + l_{q} + l_{q-1})  - \log \ddet A_{q, {\mathcal P}_{-, {\mathcal L}_{0}}}(r)  \\
(2) & & \log \Det_{2 \theta} \B^{2}_{q, {\mathcal P}_{+, {\mathcal L}_{1}}}(r) \hspace{0.1 cm} = \hspace{0.1 cm}
 \log \Det_{2 \theta} \B^{2}_{q, M_{1, r}, {\mathcal P}_{+, {\mathcal L}_{1}}, D}  + \log \Det_{2 \theta} \B^{2}_{q, M_{2}, D} +
\log \Det_{2 \theta} R_{q, {\mathcal P}_{+, {\mathcal L}_{1}}}(r)  \\
& & \hspace{4.5 cm} - \log 2 \cdot (\zeta_{\B^{2}_{Y, q}}(0) + \zeta_{\B^{2}_{Y, q-1}}(0) + l_{q} + l_{q-1}) - \log \ddet A_{q, {\mathcal P}_{+, {\mathcal L}_{1}}}(r)  \\
(3) & &  \log \Det_{2 \theta} \B^{2}_{q, \rel}(r) \hspace{0.1 cm} = \hspace{0.1 cm}
 \log \Det_{2 \theta} \B^{2}_{q, M_{1, r}, \rel, D} + \log \Det_{2 \theta} \B^{2}_{q, M_{2}, D} +
\log \Det_{2 \theta} R_{q, \rel}(r)  \\
& & \hspace{4.5 cm} - \log 2 \cdot (\zeta_{\B^{2}_{Y, q}}(0) + \zeta_{\B^{2}_{Y, q-1}}(0) + l_{q} + l_{q-1})  - \log \ddet A_{q, \rel}(r) \\
(4)  & & \log \Det_{2 \theta} \B^{2}_{q, \Abs}(r) \hspace{0.1 cm} = \hspace{0.1 cm}
 \log \Det_{2 \theta} \B^{2}_{q, M_{1, r}, \Abs, D} + \log \Det_{2 \theta} \B^{2}_{q, M_{2}, D} +
\log \Det_{2 \theta} R_{q, \Abs}(r)   \\
& & \hspace{4.5 cm} - \log 2 \cdot (\zeta_{\B^{2}_{Y, q}}(0) + \zeta_{\B^{2}_{Y, q-1}}(0) + l_{q} + l_{q-1})  - \log \ddet A_{q, \Abs}(r)
\end{eqnarray*}
\end{corollary}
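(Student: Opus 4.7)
The plan is to apply the general BFK-gluing formula displayed immediately above the corollary separately for each of the four boundary conditions $\mathfrak{D} \in \{\mathcal{P}_{-,\mathcal{L}_0},\, \mathcal{P}_{+,\mathcal{L}_1},\, \rel,\, \Abs\}$, and observe that the resulting identities are precisely assertions (1)--(4). The corollary is thus a direct specialization of the master formula; the substantive task is simply to verify that each of the four boundary conditions satisfies the hypotheses under which that master formula was stated.

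What needs to be checked for each $\mathfrak{D}$ is threefold: first, the boundary value problems for $\B^{2}_{q}$ on $M_{1,r}$ (with $\mathfrak{D}$ on $Y_0$ and Dirichlet on $Y_r$) and on $M_r$ (with $\mathfrak{D}$ on $Y_0$) must be self-adjoint with discrete non-negative spectrum and a zeta function regular at $s=0$; second, the Dirichlet-to-Neumann operator $R_{q,\mathfrak{D}}(r)$ must be a non-negative elliptic pseudodifferential operator of order one; and third, the Hermitian matrix $A_{q,\mathfrak{D}}(r)$ built from an orthonormal basis of $\Ker \B^{2}_{q,\mathfrak{D}}(r)$ must be well defined and positive-definite. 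For the relative and absolute conditions these are classical, and the original BFK formula of [8] applies directly. For the pseudodifferential conditions $\mathcal{P}_{-,\mathcal{L}_0}$ and $\mathcal{P}_{+,\mathcal{L}_1}$, item one is established in [14], item two is exactly Lemma~\ref{Lemma:2.1}, and item three follows from Lemma~\ref{Lemma:1.2}, which identifies $\Ker \B^{2}_{q,\mathfrak{D}}(r)$ with a topological cohomology group; in particular its dimension is independent of $r$, so the orthonormal basis used to form $A_{q,\mathfrak{D}}(r)$ exists and has the stated size.

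With these items in hand for each of the four choices of $\mathfrak{D}$, one substitutes into the general formula and reads off (1)--(4). I do not expect any substantial obstacle at this step: the analytic content has already been packaged into the general BFK-gluing formula of [8], [18], [19] and into Lemma~\ref{Lemma:2.1}, and the corollary is essentially a bookkeeping step that isolates the four cases that will feed into the subsequent adiabatic-limit comparison of analytic torsions.
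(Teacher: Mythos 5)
Your reading is correct and matches the paper exactly: the paper presents the corollary with no separate argument, merely as a restatement of the BFK-gluing formula displayed just above, specialized to the four boundary conditions $\mathfrak{D} \in \{\mathcal{P}_{-,\mathcal{L}_0}, \mathcal{P}_{+,\mathcal{L}_1}, \rel, \Abs\}$. Your additional care in noting that the applicability of the master formula to the pseudodifferential conditions rests on the well-posedness results of [14], on Lemma~\ref{Lemma:2.1}, and on Lemma~\ref{Lemma:1.2} is precisely the (implicit) justification the paper is relying on.
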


\vspace{0.2 cm}

\noindent
{\it Remark} : The BFK-gluing formula was proved originally on a closed manifold in [8]. But it can be extended
to a compact manifold with boundary with only minor modification when a cutting hypersurface does not intersect the boundary.

\vspace{0.2 cm}

We define $\Omega^{q}_{\pm}(Y, E|_{Y})$ similarly to (\ref {E:1.188})
and denote $\B_{Y, q}^{2, \pm} := \B^{2}_{Y, q}|_{\Omega^{q}_{\pm}(Y, E|_{Y})}$.
Simple computation leads to the following results.

\vspace{0.2 cm}

\begin{lemma}  \label{Lemma:2.3}
The spectra of $\hspace{0.1 cm} \B^{2}_{q, M_{1, r}, {\mathcal P}_{-, {\mathcal L}_{0}}, D} \hspace{0.1 cm}$,
$\hspace{0.1 cm} \B^{2}_{q, M_{1, r}, {\mathcal P}_{+, {\mathcal L}_{1}}, D} \hspace{0.1 cm}$,
$\hspace{0.1 cm} \B^{2}_{q, M_{1, r}, \rel, D} \hspace{0.1 cm}$ and  $\hspace{0.1 cm} \B^{2}_{q, M_{1, r}, \Abs, D} \hspace{0.1 cm}$
are given as follows. Let $k = 1, 2, 3, \cdots$.

\begin{eqnarray*}
& (1) & Spec \left( \B^{2}_{q, M_{1, r}, {\mathcal P}_{-, {\mathcal L}_{0}}, D} \right)  \hspace{0.1 cm} = \hspace{0.1 cm}
\left\{ \lambda_{q-1, j} + (\frac{k \pi}{r})^{2} \right\} \hspace{0.1 cm} \cup \hspace{0.1 cm} \left\{ \lambda_{q-2, j} + (\frac{k \pi}{r})^{2} \right\}
\hspace{0.1 cm} \cup \hspace{0.1 cm} \left\{ \lambda_{q, j} + (\frac{(k - \frac{1}{2}) \pi}{r})^{2} \right\}   \\
& & \hspace{1.5 cm} \cup \hspace{0.1 cm} \left\{ \lambda_{q-1, j} + (\frac{(k - \frac{1}{2}) \pi}{r})^{2} \right\}
\hspace{0.1 cm} \cup \hspace{0.1 cm} \left\{ (\frac{k \pi}{r})^{2} \right\}_{l_{q}^{+} + l_{q-1}^{+}}
\hspace{0.1 cm} \cup \hspace{0.1 cm} \left\{ (\frac{(k - \frac{1}{2}) \pi}{r})^{2} \right\}_{l_{q}^{-} + l_{q-1}^{-}},  \\
& (2) & Spec \left( \B^{2}_{q, M_{1, r}, {\mathcal P}_{+, {\mathcal L}_{1}}, D} \right)  \hspace{0.1 cm} = \hspace{0.1 cm}
\left\{ \lambda_{q-1, j} + (\frac{(k- \frac{1}{2}) \pi}{r})^{2} \right\} \hspace{0.1 cm} \cup \hspace{0.1 cm}
\left\{ \lambda_{q-2, j} + (\frac{(k - \frac{1}{2}) \pi}{r})^{2} \right\}
\hspace{0.1 cm} \cup \hspace{0.1 cm} \left\{ \lambda_{q, j} + (\frac{k \pi}{r})^{2} \right\}   \\
& & \hspace{1.5 cm}  \cup \hspace{0.1 cm} \left\{ \lambda_{q-1, j} + (\frac{k \pi}{r})^{2} \right\}
\hspace{0.1 cm} \cup \hspace{0.1 cm} \left\{ (\frac{k \pi}{r})^{2} \right\}_{l_{q}^{-} + l_{q-1}^{-}}
\hspace{0.1 cm} \cup \hspace{0.1 cm} \left\{ (\frac{(k - \frac{1}{2}) \pi}{r})^{2} \right\}_{l_{q}^{+} + l_{q-1}^{+}},  \\
& (3) & Spec \left( \B^{2}_{q, M_{1, r}, \rel, D} \right)  \hspace{0.1 cm} = \hspace{0.1 cm}
\left\{ \lambda_{q-1, j} + (\frac{k \pi}{r})^{2} \right\} \hspace{0.1 cm} \cup \hspace{0.1 cm} \left\{ \lambda_{q, j} + (\frac{k \pi}{r})^{2} \right\}
\hspace{0.1 cm} \cup \hspace{0.1 cm} \left\{ \lambda_{q-2, j} + (\frac{(k - \frac{1}{2}) \pi}{r})^{2} \right\}   \\
& & \hspace{1.5 cm}  \cup \hspace{0.1 cm} \left\{ \lambda_{q-1, j} + (\frac{(k - \frac{1}{2}) \pi}{r})^{2} \right\}
\hspace{0.1 cm} \cup \hspace{0.1 cm} \left\{ (\frac{k \pi}{r})^{2} \right\}_{l_{q}}
\hspace{0.1 cm} \cup \hspace{0.1 cm} \left\{ (\frac{(k - \frac{1}{2}) \pi}{r})^{2} \right\}_{l_{q-1}},  \\
& (4) & Spec \left( \B^{2}_{q, M_{1, r}, \Abs, D} \right)  \hspace{0.1 cm} = \hspace{0.1 cm}
\left\{ \lambda_{q-1, j} + (\frac{(k - \frac{1}{2}) \pi}{r})^{2} \right\} \hspace{0.1 cm} \cup \hspace{0.1 cm}
\left\{ \lambda_{q, j} + (\frac{(k - \frac{1}{2}) \pi}{r})^{2} \right\}
\hspace{0.1 cm} \cup \hspace{0.1 cm} \left\{ \lambda_{q-2, j} + (\frac{k \pi}{r})^{2} \right\}   \\
& & \hspace{1.5 cm}  \cup \hspace{0.1 cm} \left\{ \lambda_{q-1, j} + (\frac{k \pi}{r})^{2} \right\}
\hspace{0.1 cm} \cup \hspace{0.1 cm} \left\{ (\frac{k \pi}{r})^{2} \right\}_{l_{q-1}}
\hspace{0.1 cm} \cup \hspace{0.1 cm} \left\{ (\frac{(k - \frac{1}{2}) \pi}{r})^{2} \right\}_{l_{q}},
\end{eqnarray*}
where each $\lambda_{q, j}$ runs on $\Spec \left( \B_{Y, q}^{2, +} \right)$  and
$\left\{ (\frac{k \pi}{r})^{2} \right\}_{l_{q}^{+}}$ means that the multiplicity of each $(\frac{k \pi}{r})^{2}$ is $l_{q}^{+}$.
\end{lemma}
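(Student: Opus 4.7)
The approach is separation of variables on the cylinder $M_{1,r} = [0,r] \times Y$, combined with the Hodge decomposition on $Y$. Refining ${\mathcal H}^{\bullet}(Y, E|_{Y}) = {\mathcal K} \oplus \Gamma^{Y} {\mathcal K}$, one obtains the finer decomposition
$$
\Omega^{p}(Y, E|_{Y}) \;=\; \Omega^{p}_{-}(Y, E|_{Y}) \,\oplus\, \Omega^{p}_{+}(Y, E|_{Y}) \,\oplus\, {\mathcal K}^{p} \,\oplus\, \Gamma^{Y} {\mathcal K}^{p}.
$$
Under the isomorphism $\Psi$ of (\ref{E:1.3}), a form $\omega \in \Omega^{q}(M_{1,r}, E|_{M_{1,r}})$ becomes a pair $(\omega_{\Tan}(u), \omega_{\Nor}(u))$ of $u$-dependent forms on $Y$ in degrees $q$ and $q{-}1$, each of which is further split into its four Hodge pieces. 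The identity $\B^{2} = -\partial_{u}^{2} + \B_{Y}^{2}$ from (\ref{E:1.11}) shows that $\B^{2}_{q, M_{1,r}, {\frak D}, D}$ is diagonal with respect to this refined decomposition. Since $\nabla^{Y} : \Omega^{p-1}_{+} \to \Omega^{p}_{-}$ is an isomorphism intertwining $\B_{Y}^{2}$, the spectrum of $\B_{Y}^{2}$ on $\Omega^{p}_{-}$ coincides with that on $\Omega^{p-1}_{+}$, namely $\{\lambda_{p-1, j}\}$; on ${\mathcal K}^{p}$ and $\Gamma^{Y} {\mathcal K}^{p}$ the operator $\B_{Y}^{2}$ is zero, with multiplicities $l_{p}^{+}$ and $l_{p}^{-}$ respectively.

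The crux of the proof is translating the boundary condition ${\frak D}$ at $u = 0$ into componentwise Dirichlet or Neumann conditions; at $u = r$ every component satisfies Dirichlet. For ${\frak D} = \rel$ and ${\frak D} = \Abs$ this is standard: $\omega_{\Tan}$ satisfies Dirichlet (resp.\ Neumann) and $\omega_{\Nor}$ satisfies Neumann (resp.\ Dirichlet), uniformly on all four Hodge summands. For ${\frak D} = {\mathcal P}_{-, {\mathcal L}_{0}}$ one must combine the two conditions in (\ref{E:2.17}). The first, ${\mathcal P}_{-, {\mathcal L}_{0}}(\psi|_{Y_{0}}) = 0$, forces Dirichlet at $u = 0$ on the $\Imm \nabla^{Y} \oplus {\mathcal K}$ components of both $\psi_{\Tan}$ and $\psi_{\Nor}$. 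For the second condition, write $\B = \gamma(\partial_{u} + {\mathcal A})$ as in (\ref{E:1.10}) and observe from (\ref{E:1.9}) that $\gamma$ interchanges $\Omega^{\bullet}_{-} \oplus {\mathcal K}$ with $\Omega^{\bullet}_{+} \oplus \Gamma^{Y} {\mathcal K}$ (degree by degree), so ${\mathcal P}_{-, {\mathcal L}_{0}}((\B\psi)|_{Y_{0}}) = 0$ is equivalent to the vanishing at $u = 0$ of the $\Omega^{\bullet}_{+} \oplus \Gamma^{Y}{\mathcal K}$ components of $\partial_{u}\psi + {\mathcal A}\psi$. Expanding ${\mathcal A}\psi$ via the Hodge decomposition, the $\Omega^{\bullet}_{+} \oplus \Gamma^{Y}{\mathcal K}$ contributions from ${\mathcal A}\psi$ come only through $\Gamma^{Y} \nabla^{Y} \Gamma^{Y}$ applied to $\psi_{\Nor}^{-}$ and $\psi_{\Tan}^{-}$, which vanish at $u = 0$ by the first set of conditions. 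The second condition therefore reduces to Neumann on the $\Omega^{\bullet}_{+} \oplus \Gamma^{Y}{\mathcal K}$ pieces. The case ${\frak D} = {\mathcal P}_{+, {\mathcal L}_{1}}$ is symmetric under swapping $\Omega^{\bullet}_{-} \oplus {\mathcal K}$ with $\Omega^{\bullet}_{+} \oplus \Gamma^{Y}{\mathcal K}$.

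To assemble the spectrum, the one-dimensional problem $-\psi''(u) = \nu \psi(u)$ on $[0,r]$ has spectrum $\{(k\pi/r)^{2}\}_{k \geq 1}$ under Dirichlet-Dirichlet conditions and $\{((k - \frac{1}{2})\pi/r)^{2}\}_{k \geq 1}$ under Neumann-Dirichlet conditions. Multiplying by the $\B_{Y}^{2}$ eigenvalues on each Hodge summand and grouping the contributions from ${\mathcal K}^{q}, \Gamma^{Y}{\mathcal K}^{q}, {\mathcal K}^{q-1}, \Gamma^{Y}{\mathcal K}^{q-1}$ (which produce the multiplicities $l_{q}^{+} + l_{q-1}^{+}$ and $l_{q}^{-} + l_{q-1}^{-}$ in the ${\mathcal P}_{-, {\mathcal L}_{0}}$ case, and the mirror-image multiplicities in the other three cases) yields exactly the four lists of eigenvalues claimed. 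The main obstacle is the bookkeeping of the previous paragraph: correctly reading off the Dirichlet/Neumann pattern on each Hodge summand from the joint conditions ${\frak D}(\psi|_{Y_{0}}) = 0$ and ${\frak D}((\B\psi)|_{Y_{0}}) = 0$ requires careful use of the intertwining properties of $\gamma$ and $\Gamma^{Y}$ with the Hodge decomposition.
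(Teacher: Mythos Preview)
Your proposal is correct and is precisely the ``simple computation'' the paper alludes to without writing out: the authors give no proof of Lemma~\ref{Lemma:2.3} beyond the sentence ``Simple computation leads to the following results.'' Your separation-of-variables argument, the eightfold Hodge splitting of $(\psi_{\Tan},\psi_{\Nor})$, and the reduction of the joint condition ${\mathcal P}_{-,{\mathcal L}_{0}}(\psi|_{Y_{0}})=0$, ${\mathcal P}_{-,{\mathcal L}_{0}}((\B\psi)|_{Y_{0}})=0$ to a Dirichlet/Neumann pattern on each summand is exactly the intended computation, and the bookkeeping you give (in particular, the observation that the ${\mathcal A}$-contribution to the second condition is killed by the first) checks against the listed spectra.
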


For each $q$ we define

\begin{eqnarray*}
\zeta_{\Delta_{q, N}} (s) & = & \sum_{\lambda_{q, j} \in \Spec(\B_{Y, q}^{2, +})} \sum_{k=1}^{\infty}
\left( \lambda_{q, j} + \left( \frac{(k - \frac{1}{2}) \pi}{r} \right)^{2} \right)^{-s},  \\
\zeta_{\Delta_{q, D}} (s) & = & \sum_{\lambda_{q, j} \in \Spec(\B_{Y, q}^{2, +})} \sum_{k=1}^{\infty}
\left( \lambda_{q, j} + \left( \frac{k \pi}{r} \right)^{2} \right)^{-s}.
\end{eqnarray*}

\noindent
The following result is well known (cf. [22]).

\vspace{0.2 cm}

\begin{lemma}  \label{Lemma:2.4}
We put $\xi_{Y, q}(s) = \frac{\Gamma(s - \frac{1}{2})}{2 \sqrt{\pi} \Gamma(s)} \zeta_{\B_{Y, q}^{2, +}}(s - \frac{1}{2})$. Then :
\begin{eqnarray*}
& (1) & - \zeta_{\Delta_{q, N}}^{\prime} (0) \hspace{0.1 cm} = \hspace{0.1 cm}  - r \xi_{Y, q}^{\prime}(0) + \sum_{\lambda_{q, j} \in \Spec(\B_{Y, q}^{2, +})}
\log ( 1 + e^{- 2 r \sqrt{\lambda_{q, j}}}), \\
& (2) & - \zeta_{\Delta_{q, D}}^{\prime} (0)  \hspace{0.1 cm} = \hspace{0.1 cm}  - r \xi_{Y, q}^{\prime}(0) - \frac{1}{2} \log \Det (\B_{Y, q}^{2, +})
\hspace{0.1 cm} + \sum_{\lambda_{q, j} \in \Spec(\B_{Y, q}^{2, +})}\log ( 1 - e^{- 2 r \sqrt{\lambda_{q, j}}}).
\end{eqnarray*}
\end{lemma}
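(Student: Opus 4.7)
The plan is to fix one eigenvalue $\mu^{2}=\lambda_{q,j}$ of $\B_{Y,q}^{2,+}$, evaluate the one‑dimensional partial sum
\[
f_{D/N}(s,\mu) \;=\; \sum_{k=1}^{\infty}\bigl(\mu^{2}+(k\pi/r)^{2}\bigr)^{-s}\quad\text{or}\quad \sum_{k=1}^{\infty}\bigl(\mu^{2}+((k-\tfrac12)\pi/r)^{2}\bigr)^{-s}
\]
via Mellin transform and Poisson summation, and then sum the result over the spectrum. Concretely, I would write $f_{D}(s,\mu)=\frac{1}{\Gamma(s)}\int_{0}^{\infty}t^{s-1}e^{-t\mu^{2}}\vartheta_{D}(t)\,dt$ where $\vartheta_{D}(t)=\sum_{k\ge 1}e^{-t(k\pi/r)^{2}}$, and apply Poisson summation in the form
\[
\vartheta_{D}(t)\;=\;\frac{r}{2\sqrt{\pi t}}-\frac12+\frac{r}{\sqrt{\pi t}}\sum_{n\ge 1}e^{-r^{2}n^{2}/t},
\]
with the analogous shifted identity $\vartheta_{N}(t)=\frac{r}{2\sqrt{\pi t}}+\frac{r}{\sqrt{\pi t}}\sum_{n\ge 1}(-1)^{n}e^{-r^{2}n^{2}/t}$ in the Neumann case (note the absence of the $-\tfrac12$, which is the origin of the $-\tfrac12\log\Det$ discrepancy between the two formulas).

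After substituting and evaluating the Gaussian $t$-integrals, the first piece produces the gamma ratio $\frac{r\,\Gamma(s-1/2)}{2\sqrt{\pi}\,\Gamma(s)}\,\mu^{1-2s}$; the $-\tfrac12$ piece (only in the Dirichlet case) produces $-\tfrac12\mu^{-2s}$; and the exponential remainders yield the Bessel series $\frac{2r}{\sqrt{\pi}\,\Gamma(s)}\sum_{n\ge 1}(\pm 1)^{n}(rn/\mu)^{s-1/2}K_{s-1/2}(2rn\mu)$. I would then take $\frac{d}{ds}|_{s=0}$ using $1/\Gamma(s)=s+O(s^{2})$ (so only the values at $s=0$ of the coefficient functions survive in the Bessel sum) together with the closed form $K_{-1/2}(z)=\sqrt{\pi/(2z)}\,e^{-z}$. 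A short calculation collapses the Bessel sums to $\sum_{n\ge1}\tfrac{1}{n}e^{-2rn\mu}=-\log(1-e^{-2r\mu})$ in case (2) and $\sum_{n\ge 1}\tfrac{(-1)^{n}}{n}e^{-2rn\mu}=-\log(1+e^{-2r\mu})$ in case (1), while the gamma piece contributes $-r\mu$ to $f'(0,\mu)$ and the $-\tfrac12\mu^{-2s}$ piece (Dirichlet only) contributes $\log\mu$.

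Finally I would sum the resulting formula for $-f'_{D/N}(0,\mu_{j})$ over the eigenvalues $\lambda_{q,j}\in\Spec(\B_{Y,q}^{2,+})$. The regularized sum $\sum_{j}\sqrt{\lambda_{q,j}}$ is by definition $\zeta_{\B_{Y,q}^{2,+}}(-1/2)$, and a direct inspection of $\xi_{Y,q}(s)=\frac{\Gamma(s-1/2)}{2\sqrt{\pi}\Gamma(s)}\zeta_{\B_{Y,q}^{2,+}}(s-1/2)$ near $s=0$ gives $\xi_{Y,q}(0)=0$ and $\xi_{Y,q}'(0)=-\zeta_{\B_{Y,q}^{2,+}}(-1/2)$, so that $r\sum_{j}\sqrt{\lambda_{q,j}}=-r\,\xi_{Y,q}'(0)$. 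The term $-\sum_{j}\log\mu_{j}=\tfrac12\zeta'_{\B_{Y,q}^{2,+}}(0)=-\tfrac12\log\Det(\B_{Y,q}^{2,+})$ appears only in the Dirichlet case, which matches exactly the difference between (1) and (2).

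The main technical obstacle is justifying the interchange of the sum $\sum_{j}$ with the Mellin integral and with differentiation at $s=0$. This requires splitting the spectral sum at some threshold, treating finitely many small eigenvalues separately, and controlling the remainders uniformly using the Weyl law for $\B_{Y,q}^{2,+}$; the exponentially decaying Bessel remainders are harmless, but the ``gamma‑function'' piece must be handled by analytic continuation in $s$ past $s=\dim Y/2$ before the limit $s\to 0$ can be taken termwise. Once the interchange is legitimate, the identification of the regularized sums with $\xi_{Y,q}'(0)$ and $\log\Det(\B_{Y,q}^{2,+})$ is immediate, and (1), (2) follow.
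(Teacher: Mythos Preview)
Your approach is correct and is essentially the same as the paper's: the paper also proceeds via Poisson summation on the theta functions and the Mellin transform, citing [22] for the Dirichlet case and recording the identity
\[
\sum_{k\ge 1}e^{-\pi^{2}(k-\tfrac12)^{2}t}\;=\;\frac{1}{\sqrt{\pi t}}\Bigl(\tfrac12+2\sum_{k\ge 1}e^{-4k^{2}/t}-\sum_{k\ge 1}e^{-k^{2}/t}\Bigr)
\]
(which, after splitting the last sum into even and odd indices, is exactly your formula for $\vartheta_{N}$ at $r=1$) for the Neumann case. You have simply written out the Bessel-function evaluation and the identification $r\sum_{j}\sqrt{\lambda_{q,j}}=-r\,\xi_{Y,q}'(0)$ that the paper leaves implicit; note that the latter identification uses that $\zeta_{\B_{Y,q}^{2,+}}(s)$ is regular at $s=-\tfrac12$, which holds here because $Y$ is closed of even dimension $m-1$.
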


\begin{proof}
The computation of
$- \zeta_{\Delta_{q, D}}^{\prime} (0)$ was done in Proposition 5.1 of [22].
Using the Poisson summation formula, we have the following identity
$$
\sum_{k=1}^{\infty} e^{- \pi^{2} (k - \frac{1}{2})^{2} t} \hspace{0.1 cm} = \hspace{0.1 cm}
\frac{1}{\sqrt{\pi t}} \hspace{0.1 cm} \left( \frac{1}{2} \hspace{0.1 cm} + \hspace{0.1 cm} 2 \sum_{k=1}^{\infty} e^{-\frac{4 k^{2}}{t}}
 \hspace{0.1 cm} - \hspace{0.1 cm} \sum_{k=1}^{\infty} e^{- \frac{k^{2}}{t}} \right),
$$
from which we can compute $- \zeta_{\Delta_{q, N}}^{\prime} (0)$.
\end{proof}

\vspace{0.2 cm}

\begin{corollary}   \label{Corollary:2.5}
Putting $\hspace{0.1 cm} {\mathcal C}_{q}^{+}(r) = \prod_{\lambda_{q, j} \in \Spec(\B_{Y, q}^{2, +})}
\left( 1 + \frac{2 e^{- r \sqrt{\lambda_{q, j}}}}{e^{r \sqrt{\lambda_{q, j}}} - e^{- r \sqrt{\lambda_{q, j}}}} \right)$, we have
\begin{eqnarray*}
\left( - \zeta_{\Delta_{q, N}}^{\prime} (0) \right) - \left( - \zeta_{\Delta_{q, D}}^{\prime} (0) \right) \hspace{0.1 cm} = \hspace{0.1 cm}
\frac{1}{2} \log \Det \B_{Y, q}^{2, +} \hspace{0.1 cm} + \log {\mathcal C}_{q}^{+}(r).
\end{eqnarray*}
\end{corollary}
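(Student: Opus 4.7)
The plan is to obtain Corollary \ref{Corollary:2.5} as a direct consequence of Lemma \ref{Lemma:2.4} by subtracting the two formulas and then verifying the resulting product identity by an elementary algebraic manipulation of each factor.

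First I would take the difference of the expressions in parts (1) and (2) of Lemma \ref{Lemma:2.4}. The terms $-r \xi_{Y,q}^{\prime}(0)$ cancel, the Dirichlet side contributes a $+\frac{1}{2}\log\Det(\B_{Y,q}^{2,+})$ (because it is subtracted with a minus sign), and the remaining pieces combine into a single sum over $\lambda_{q,j}\in\Spec(\B_{Y,q}^{2,+})$, giving
\begin{equation*}
\bigl(-\zeta_{\Delta_{q,N}}^{\prime}(0)\bigr) - \bigl(-\zeta_{\Delta_{q,D}}^{\prime}(0)\bigr) \;=\; \tfrac{1}{2}\log\Det\B_{Y,q}^{2,+} \;+\; \sum_{\lambda_{q,j}} \log\frac{1+e^{-2r\sqrt{\lambda_{q,j}}}}{1-e^{-2r\sqrt{\lambda_{q,j}}}}.
\end{equation*}

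Next I would identify the remaining sum with $\log\mathcal{C}_q^+(r)$. Setting $x = e^{-r\sqrt{\lambda_{q,j}}}$, so that $e^{r\sqrt{\lambda_{q,j}}} - e^{-r\sqrt{\lambda_{q,j}}} = x^{-1}-x$, a direct computation gives
\begin{equation*}
1 + \frac{2e^{-r\sqrt{\lambda_{q,j}}}}{e^{r\sqrt{\lambda_{q,j}}}-e^{-r\sqrt{\lambda_{q,j}}}} \;=\; 1 + \frac{2x^{2}}{1-x^{2}} \;=\; \frac{1+x^{2}}{1-x^{2}} \;=\; \frac{1+e^{-2r\sqrt{\lambda_{q,j}}}}{1-e^{-2r\sqrt{\lambda_{q,j}}}},
\end{equation*}
so taking $\log$ and summing over $\lambda_{q,j}\in\Spec(\B_{Y,q}^{2,+})$ exactly yields $\log\mathcal{C}_q^+(r)$. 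Combining this with the previous display produces the claimed identity.

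There is essentially no obstacle here: the statement is a bookkeeping corollary of Lemma \ref{Lemma:2.4}, and the only content is the one-line algebraic identity $1+\tfrac{2x^2}{1-x^2}=\tfrac{1+x^2}{1-x^2}$ performed termwise. The convergence of the product defining $\mathcal{C}_q^+(r)$ and of the corresponding sum is not an issue because $\B_{Y,q}^{2,+}$ is a positive operator, so $\lambda_{q,j}$ is bounded below by a positive constant and $e^{-r\sqrt{\lambda_{q,j}}}$ decays rapidly in $j$; this same decay was already what made the sums in Lemma \ref{Lemma:2.4} convergent.
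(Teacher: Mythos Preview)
Your proposal is correct and matches the paper's intent: Corollary~\ref{Corollary:2.5} is stated without proof precisely because it follows by subtracting the two formulas of Lemma~\ref{Lemma:2.4} and simplifying termwise, exactly as you do. The algebraic identity $1+\frac{2x^2}{1-x^2}=\frac{1+x^2}{1-x^2}$ with $x=e^{-r\sqrt{\lambda_{q,j}}}$ is the only thing to check, and you have it right.
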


\vspace{0.2 cm}

\noindent
If we denote the Riemann zeta function by $\zeta_{R}(s)$, it is well known that
$\zeta_{R}(0) = - \frac{1}{2}$ and $\zeta_{R}^{\prime}(0) = - \frac{1}{2} \log 2 \pi$, which leads to the following result.

\begin{lemma} \label{Lemma:2.6}
Setting
$ \hspace{0.1 cm}
\zeta_{1}(s) = \sum_{k=1}^{\infty} \left( \frac{k \pi}{r} \right)^{-2s}$ and
$\hspace{0.1 cm} \zeta_{2}(s) = \sum_{k=1}^{\infty} \left( \frac{(k - \frac{1}{2}) \pi}{r} \right)^{-2s}$,
we have
$ \hspace{0.1 cm} \zeta_{1}^{\prime}(0) = - \log 2r \hspace{0.1 cm}$ and $\hspace{0.1 cm} \zeta_{2}^{\prime}(0) = - \log 2$.
\end{lemma}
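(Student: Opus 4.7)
The proof is a routine manipulation of Dirichlet series, and my plan is to reduce both sums to the Riemann zeta function, where the values $\zeta_R(0)=-1/2$ and $\zeta_R'(0)=-\tfrac12\log 2\pi$ cited in the excerpt do all the work.

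For $\zeta_1$, I would simply factor out the constant in the base of the exponent:
\[
\zeta_1(s) \;=\; \Bigl(\frac{r}{\pi}\Bigr)^{2s}\sum_{k=1}^\infty k^{-2s} \;=\; \Bigl(\frac{r}{\pi}\Bigr)^{2s}\zeta_R(2s).
\]
Differentiating at $s=0$ and using $\zeta_R(0)=-\tfrac12$, $\zeta_R'(0)=-\tfrac12\log 2\pi$ gives
\[
\zeta_1'(0)\;=\;2\log\Bigl(\tfrac{r}{\pi}\Bigr)\zeta_R(0)+2\zeta_R'(0)\;=\;-\log\Bigl(\tfrac{r}{\pi}\Bigr)-\log(2\pi)\;=\;-\log(2r),
\]
as claimed.

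For $\zeta_2$, I would first isolate the sum over half-integers from the Riemann zeta function. Writing $k-\tfrac12=\tfrac{2k-1}{2}$ and observing that $\sum_{n\text{ odd}}n^{-2s}=\zeta_R(2s)-\sum_{n\text{ even}}n^{-2s}=(1-2^{-2s})\zeta_R(2s)$, I obtain
\[
\sum_{k=1}^\infty\bigl(k-\tfrac12\bigr)^{-2s}\;=\;2^{2s}(1-2^{-2s})\zeta_R(2s)\;=\;(2^{2s}-1)\zeta_R(2s),
\]
so that
\[
\zeta_2(s)\;=\;\Bigl(\frac{r}{\pi}\Bigr)^{2s}(2^{2s}-1)\zeta_R(2s)\;=\;\Bigl(\frac{2r}{\pi}\Bigr)^{2s}\zeta_R(2s)-\Bigl(\frac{r}{\pi}\Bigr)^{2s}\zeta_R(2s).
\]
Differentiating each term using the same rule as above and subtracting, the $\zeta_R'(0)$ contributions cancel and I am left with $\zeta_2'(0)=-\log(2r/\pi)+\log(r/\pi)=-\log 2$.

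There is no genuine obstacle here; the only mild subtlety is choosing the right decomposition for $\zeta_2$, for which the odd/even splitting of the Riemann zeta is the cleanest route (equivalently, one could invoke the Hurwitz zeta identity $\zeta_H(s,\tfrac12)=(2^s-1)\zeta_R(s)$, but the elementary derivation above is self-contained and suffices).
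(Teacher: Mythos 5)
Your computation is correct and matches the approach the paper implicitly intends: the lemma in the paper is stated as a direct consequence of the values $\zeta_R(0)=-\tfrac12$ and $\zeta_R'(0)=-\tfrac12\log 2\pi$, with no further proof offered, and your reduction to $\zeta_R$ (with the odd/even splitting for $\zeta_2$) is exactly the standard way to make that deduction explicit.
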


\noindent
Lemma \ref{Lemma:2.3} together with Corollary \ref{Corollary:2.5} and Lemma \ref{Lemma:2.6} yields the following result.

\begin{lemma}  \label{Lemma:2.7}
\begin{eqnarray*}
(1) \hspace{0.2 cm} \log \Det \left( \B^{2}_{q, M_{1, r}, {\mathcal P}_{-, {\mathcal L}_{0}}, D} \right) -
\log \Det \left( \B^{2}_{q, M_{1, r}, \rel, D} \right)  & = &  \frac{1}{2} \left( \log \Det \B_{Y, q}^{2, +} - \log \Det \B_{Y, q-2}^{2, +} \right)  \\
& + & \left( \log {\mathcal C}_{q}^{+}(r) - \log {\mathcal C}_{q-2}^{+}(r) \right) + \left( l_{q-1}^{+} - l_{q}^{-} \right) \log r  \\
(2) \hspace{0.2 cm} \log \Det \left( \B^{2}_{q, M_{1, r}, {\mathcal P}_{+, {\mathcal L}_{1}}, D} \right) -
\log \Det \left( \B^{2}_{q, M_{1, r}, \rel, D} \right)  & = & \left( l_{q-1}^{-} - l_{q}^{+} \right) \log r
\end{eqnarray*}
\begin{eqnarray*}
\noindent
(3) \hspace{0.2 cm}  \sum_{q=0}^{m} (-1)^{q+1} q \left( \log \Det \B^{2}_{q, M_{1, r}, {\widetilde {\mathcal P}}_{0}, D}
- \log \Det \B^{2}_{q, M_{1, r}, \rel, D} \right) & = & \sum_{q=\even} \log \Det \B_{Y, q}^{2, +} \hspace{0.1 cm} + \hspace{0.1 cm}
2 \sum_{q=\even} \log {\mathcal C}_{q}^{+}(r) \\
 + \hspace{0.1 cm} \left( \sum_{q = \even} (2q+1) l_{q}^{-} - \sum_{q = \odd} (2q+1) l_{q}^{+} \right) \log r   & &
\end{eqnarray*}
\begin{eqnarray*}
\noindent
(4) \hspace{0.2 cm}  \sum_{q=0}^{m} (-1)^{q+1} q \left( \log \Det \B^{2}_{q, M_{1, r}, {\widetilde {\mathcal P}}_{1}, D}
- \log \Det \B^{2}_{q, M_{1, r}, \rel, D} \right) & = & - \sum_{q=\odd} \log \Det \B_{Y, q}^{2, +} \hspace{0.1 cm} - \hspace{0.1 cm}
2 \sum_{q=\odd} \log {\mathcal C}_{q}^{+}(r) \\
 + \hspace{0.1 cm} \left( \sum_{q = \even} (2q+1) l_{q}^{+} - \sum_{q = \odd} (2q+1) l_{q}^{-} \right) \log r   & &
\end{eqnarray*}
\begin{eqnarray*}
\noindent
 (5)  \hspace{0.2 cm} \sum_{q=0}^{m} (-1)^{q+1} q \left( \log \Det \B^{2}_{q, M_{1, r}, {\mathcal P}_{-, {\mathcal L}_{0}}, D}
- \log \Det \B^{2}_{q, M_{1, r}, \rel, D} \right) & = &  \frac{m}{2} \hspace{0.1 cm} \chi(Y, E|_{Y}) \hspace{0.1 cm} \log r,
\end{eqnarray*}
where $\hspace{0.1 cm} \chi(Y, E|_{Y}) := \sum_{q=0}^{m-1} (-1)^{q} \cdot l_{q} \hspace{0.1 cm}$ the Euler characteristic of $Y$
with respect to $H^{\bullet}(Y, E|_{Y})$.
\end{lemma}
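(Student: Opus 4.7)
My plan is to treat all five identities uniformly by reading off the spectra in Lemma~\ref{Lemma:2.3} and computing each log-determinant as a sum of ``bulk'' and ``kernel'' contributions. The bulk families have the form $\{\lambda_{q',j} + (k\pi/r)^{2}\}$ or $\{\lambda_{q',j} + ((k-\tfrac{1}{2})\pi/r)^{2}\}$ with $q' \in \{q,q-1,q-2\}$ and $\lambda_{q',j} \in \Spec(\B_{Y,q'}^{2,+})$; their log-determinants are $-\zeta'_{\Delta_{q', D}}(0)$ or $-\zeta'_{\Delta_{q', N}}(0)$, so a matched Dirichlet$\leftrightarrow$Neumann swap in a single family is computed by Corollary~\ref{Corollary:2.5}. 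The kernel families $\{(k\pi/r)^{2}\}_{l}$ and $\{((k-\tfrac{1}{2})\pi/r)^{2}\}_{l}$ contribute $l\log 2r$ and $l\log 2$ respectively by Lemma~\ref{Lemma:2.6}.

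For (1) I would subtract the spectrum of Lemma~\ref{Lemma:2.3}(3) from that of Lemma~\ref{Lemma:2.3}(1): the two $\lambda_{q-1,j}$ families cancel exactly, while the Dirichlet/Neumann roles of the $\lambda_{q,j}$- and $\lambda_{q-2,j}$-pieces are swapped. Two applications of Corollary~\ref{Corollary:2.5}, with opposite signs, yield the bulk contribution $\tfrac{1}{2}\bigl(\log\Det\B_{Y,q}^{2,+} - \log\Det\B_{Y,q-2}^{2,+}\bigr) + \bigl(\log\mathcal{C}_{q}^{+}(r) - \log\mathcal{C}_{q-2}^{+}(r)\bigr)$. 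The kernel multiplicities pass from $(l_q, l_{q-1})$ to $(l_q^+ + l_{q-1}^+, l_q^- + l_{q-1}^-)$; writing $l_q = l_q^+ + l_q^-$ and applying Lemma~\ref{Lemma:2.6} gives $(l_{q-1}^+ - l_q^-)(\log 2r - \log 2) = (l_{q-1}^+ - l_q^-)\log r$. For (2) the bulk spectra in Lemma~\ref{Lemma:2.3}(2) and (3) already coincide, so only the kernel multiplicities differ and produce $(l_{q-1}^- - l_q^+)\log r$. Parts (3) and (4) then follow by splitting the alternating sum over even and odd $q$, inserting (1) and (2) respectively, and telescoping via the index shift $q \mapsto q-2$ (which preserves parity); this collapses the $\log\Det\B_{Y,q}^{2,+}$ and $\log\mathcal{C}_q^+(r)$ differences into the single sums over $q=\even$ or $q=\odd$ stated in the lemma, while a shift $q \mapsto q-1$ converts the $l_{q-1}^{\pm}$ terms into $\sum_{q=\even}(2q+1)l_q^{\mp} - \sum_{q=\odd}(2q+1)l_q^{\pm}$.

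For (5) the same telescoping applied to (1) for every $q$ yields $\sum_{q=0}^{m-1}(-1)^q \log\Det\B_{Y,q}^{2,+}$ and $2\sum_{q=0}^{m-1}(-1)^q \log\mathcal{C}_q^+(r)$, and the crux is that both vanish. The only conceptual input I would need is the Hodge-theoretic symmetry on the closed even-dimensional boundary $Y$: the isospectral isomorphisms $\nabla^Y \colon \Omega^q_+(Y, E|_Y) \to \Omega^{q+1}_-(Y, E|_Y)$ (since $\nabla^Y$ intertwines $\B_Y^2$) and $\Gamma^Y \colon \Omega^{q+1}_-(Y, E|_Y) \to \Omega^{m-2-q}_+(Y, E|_Y)$ (since $\Gamma^Y$ commutes with $\B_Y$) compose to give $\Spec\B_{Y,q}^{2,+} = \Spec\B_{Y,m-2-q}^{2,+}$; since $m$ is odd, the involution $q \mapsto m-2-q$ reverses the parity of $(-1)^q$, forcing both alternating sums to vanish term by term. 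The surviving $\log r$ term $\sum_{q=0}^{m} (-1)^{q+1} q(l_{q-1}^+ - l_q^-)$ is then handled with the identity $l_q^- = l_{m-1-q}^+$: the reflection $q \mapsto m-1-q$ reduces it to $m \sum_q (-1)^q l_q^+ \log r$, and a final use of $l_q^- = l_{m-1-q}^+$ (together with $m-1$ even) gives $\sum_q (-1)^q l_q^+ = \tfrac{1}{2}\chi(Y, E|_Y)$, producing the claimed $\tfrac{m}{2}\chi(Y, E|_Y)\log r$. The main obstacle throughout is the Hodge-duality vanishing in (5); the remainder is careful but elementary reindexing.
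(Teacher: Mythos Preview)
Your proposal is correct and follows exactly the route the paper intends: the paper's entire proof is the sentence ``Lemma~\ref{Lemma:2.3} together with Corollary~\ref{Corollary:2.5} and Lemma~\ref{Lemma:2.6} yields the following result,'' and you have filled in precisely that computation. The one ingredient you make explicit that the paper leaves tacit is the Hodge--duality identity $\Spec\B_{Y,q}^{2,+}=\Spec\B_{Y,m-2-q}^{2,+}$ on the closed even-dimensional boundary $Y$, which is indeed what forces the alternating bulk sums in (5) to vanish; this is standard and your justification via $\nabla^{Y}$ and $\Gamma^{Y}$ is the expected one.
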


\vspace{0.2 cm}

We finally discuss the Dirichlet-to-Neumann operator $R_{q, {\frak D}}(r)$ defined by
$R_{q, {\frak D}}(r) = Q_{q, 1, {\frak D}}(r) +  Q_{q, 2}$, where ${\frak D}$ is one of
${\mathcal P}_{-, {\mathcal L}_{0}}$, ${\mathcal P}_{+, {\mathcal L}_{1}}$, the absolute or the relative boundary condition.
The following lemma is straightforward.

\vspace{0.2 cm}

\begin{lemma}  \label{Lemma:2.8}
$R_{q, {\mathcal P}_{-, {\mathcal L}_{0}}}(r)$, $R_{q, {\mathcal P}_{+, {\mathcal L}_{1}}}(r)$ and $R_{q, \rel}(r)$ are described as follows.

\begin{eqnarray*}
R_{q, {\mathcal P}_{-, {\mathcal L}_{0}}}(r) &  = &  Q_{q, 2} \hspace{0.1 cm} + \hspace{0.1 cm}
\left( \begin{array}{clcr} \sqrt{\B_{Y, q}^{2}} & 0 \\ 0 &  \sqrt{\B_{Y, q-1}^{2}}  \end{array}  \right) \hspace{0.1 cm} + \hspace{0.1 cm}
 \begin{cases}
\frac{ 2 \sqrt{\B_{Y}^{2}} e^{- \sqrt{\B_{Y}^{2}} r}}
{e^{\sqrt{\B_{Y}^{2}} r} - e^{- \sqrt{\B_{Y}^{2}} r}}
\hspace{0.2 cm} \text{on} \hspace{0.2 cm}
\Imm  {\mathcal P}_{-, {\mathcal L}_{0}} \cap (\Ker \B_{Y}^{2})^{\perp}\\
\frac{1}{r} \hspace{2.7 cm} \text{on} \hspace{0.2 cm}
\Imm  {\mathcal P}_{-, {\mathcal L}_{0}} \cap \Ker \B_{Y}^{2}\\
 -  \frac{ 2 \sqrt{\B_{Y}^{2}} e^{- \sqrt{\B_{Y}^{2}} r}}
{e^{\sqrt{\B_{Y}^{2}} r} + e^{- \sqrt{\B_{Y}^{2}} r}}
  \hspace{0.2 cm} \text{on} \hspace{0.2 cm}
 \Imm {\mathcal P}_{+, {\mathcal L}_{1}} \cap (\Ker \B_{Y}^{2})^{\perp}\\
0 \hspace{2.8 cm} \text{on} \hspace{0.2 cm}
\Imm  {\mathcal P}_{+, {\mathcal L}_{1}} \cap \Ker \B_{Y}^{2}
\end{cases}  \\
R_{q, {\mathcal P}_{+, {\mathcal L}_{1}}}(r) &  = &  Q_{q, 2} \hspace{0.1 cm} + \hspace{0.1 cm}
\left( \begin{array}{clcr} \sqrt{\B_{Y, q}^{2}} & 0 \\ 0 &  \sqrt{\B_{Y, q-1}^{2}}  \end{array}  \right) \hspace{0.1 cm} + \hspace{0.1 cm}
 \begin{cases}
- \frac{ 2 \sqrt{\B_{Y}^{2}} e^{- \sqrt{\B_{Y}^{2}} r}}
{e^{\sqrt{\B_{Y}^{2}} r} + e^{- \sqrt{\B_{Y}^{2}} r}}
\hspace{0.2 cm} \text{on} \hspace{0.2 cm}
\Imm  {\mathcal P}_{-, {\mathcal L}_{0}} \cap (\Ker \B_{Y}^{2})^{\perp}\\
0 \hspace{2.7 cm} \text{on} \hspace{0.2 cm}
\Imm  {\mathcal P}_{-, {\mathcal L}_{0}} \cap \Ker \B_{Y}^{2}\\
 \frac{ 2 \sqrt{\B_{Y}^{2}} e^{- \sqrt{\B_{Y}^{2}} r}}
{e^{\sqrt{\B_{Y}^{2}} r} - e^{- \sqrt{\B_{Y}^{2}} r}}
  \hspace{0.2 cm} \text{on} \hspace{0.2 cm}
 \Imm {\mathcal P}_{+, {\mathcal L}_{1}} \cap (\Ker \B_{Y}^{2})^{\perp}\\
\frac{1}{r} \hspace{2.8 cm} \text{on} \hspace{0.2 cm}
\Imm  {\mathcal P}_{+, {\mathcal L}_{1}} \cap \Ker \B_{Y}^{2}
\end{cases}  \\
R_{q, \rel}(r) &  = &  Q_{q, 2} \hspace{0.1 cm} + \hspace{0.1 cm}
\left( \begin{array}{clcr} \sqrt{\B_{Y, q}^{2}} & 0 \\ 0 &  \sqrt{\B_{Y, q-1}^{2}}  \end{array}  \right) \hspace{0.1 cm} + \hspace{0.1 cm}
 \begin{cases}
 \frac{ 2 \sqrt{\B_{Y}^{2}} e^{- \sqrt{\B_{Y}^{2}} r}}{e^{\sqrt{\B_{Y}^{2}} r} - e^{- \sqrt{\B_{Y}^{2}} r}}
\hspace{0.2 cm} \text{on} \hspace{0.2 cm} \Omega^{q}_{-}(Y, E|_{Y}) \oplus \Omega^{q}_{+}(Y, E|_{Y})  \\
\frac{1}{r} \hspace{2.7 cm} \text{on} \hspace{0.2 cm} \Ker \B_{Y}^{2} \cap \Omega^{q}(Y, E|_{Y})\\
- \frac{ 2 \sqrt{\B_{Y}^{2}} e^{- \sqrt{\B_{Y}^{2}} r}}{e^{\sqrt{\B_{Y}^{2}} r} + e^{- \sqrt{\B_{Y}^{2}} r}}
  \hspace{0.2 cm} \text{on} \hspace{0.2 cm}  \Omega^{q-1}_{-}(Y, E|_{Y}) \oplus \Omega^{q-1}_{+}(Y, E|_{Y})  \\
0 \hspace{2.8 cm} \text{on} \hspace{0.2 cm}   \Ker \B_{Y}^{2} \cap \Omega^{q-1}(Y, E|_{Y})
\end{cases}
\end{eqnarray*}

\end{lemma}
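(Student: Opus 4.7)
The plan is to solve the boundary value problem (\ref{E:2.1}) defining $Q_{q,1,\mathfrak{D}}(r)$ explicitly on the product cylinder $M_{1,r}=[0,r]\times Y$ by separation of variables. By (\ref{E:1.11}), $\B^{2}$ there reduces to $-\partial_{u}^{2}+\B_{Y}^{2}$ acting diagonally on $\Omega^{q}(Y)\oplus\Omega^{q-1}(Y)$, and $\B_{Y}^{2}$ preserves each summand of the orthogonal Hodge-type decomposition $\Omega^{\bullet}(Y,E|_{Y})=\Imm\nabla^{Y}\oplus\Imm\Gamma^{Y}\nabla^{Y}\Gamma^{Y}\oplus\mathcal{K}\oplus\Gamma^{Y}\mathcal{K}$. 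Writing $\phi(u)=\sum_{j}a_{j}(u)\psi_{j}$ in an eigenbasis $\B_{Y}^{2}\psi_{j}=\lambda_{j}\psi_{j}$ adapted to this decomposition, each $a_{j}$ satisfies the scalar ODE $a_{j}''(u)=\lambda_{j}a_{j}(u)$; since $a_{j}(r)$ is prescribed by the Dirichlet datum at $Y_{r}$, the whole problem reduces to determining the effective boundary condition at $u=0$ on each spectral summand.

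For $\mathfrak{D}={\mathcal P}_{-,{\mathcal L}_{0}}$ the two conditions of (\ref{E:2.17}) decode as follows. On a mode $\psi_{j}\in\Imm{\mathcal P}_{-,{\mathcal L}_{0}}$ the first, ${\mathcal P}_{-,{\mathcal L}_{0}}(\phi|_{Y_{0}})=0$, forces $a_{j}(0)=0$, and the second is automatic since that subspace is stable under $\partial_{u}$. On a mode $\psi_{j}\in\Imm{\mathcal P}_{+,{\mathcal L}_{1}}$ the first is automatic; for the second I use $\B=\gamma(\partial_{u}+{\mathcal A})$ from (\ref{E:1.10}) together with the intertwining identity $\gamma^{-1}{\mathcal P}_{-,{\mathcal L}_{0}}\gamma={\mathcal P}_{+,{\mathcal L}_{1}}$, which follows from $\gamma(\Imm\nabla^{Y})=\Imm\Gamma^{Y}\nabla^{Y}\Gamma^{Y}$ and $\gamma(\mathcal{K})=\Gamma^{Y}\mathcal{K}$, to rewrite the condition as ${\mathcal P}_{+,{\mathcal L}_{1}}\bigl((\partial_{u}\phi+{\mathcal A}\phi)|_{Y_{0}}\bigr)=0$. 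A short calculation using $(\nabla^{Y})^{2}=0$ gives $(\nabla^{Y}+\Gamma^{Y}\nabla^{Y}\Gamma^{Y})(\Imm\Gamma^{Y}\nabla^{Y}\Gamma^{Y}\oplus\Gamma^{Y}\mathcal{K})\subset\Imm\nabla^{Y}\subset\Imm{\mathcal P}_{-,{\mathcal L}_{0}}$, so ${\mathcal A}\phi(0)$ is annihilated by ${\mathcal P}_{+,{\mathcal L}_{1}}$ and only $a_{j}'(0)=0$ survives. Thus ${\mathcal P}_{-,{\mathcal L}_{0}}$ is Dirichlet at $u=0$ on $\Imm{\mathcal P}_{-,{\mathcal L}_{0}}$ and Neumann at $u=0$ on $\Imm{\mathcal P}_{+,{\mathcal L}_{1}}$; ${\mathcal P}_{+,{\mathcal L}_{1}}$ is symmetric, and the relative condition degenerates to pure Dirichlet on the tangential slot and pure Neumann on the normal slot.

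With these effective conditions in hand the remainder is an elementary ODE calculation. For $\lambda_{j}>0$, Dirichlet gives $a(u)=f\sinh(\sqrt{\lambda_{j}}u)/\sinh(\sqrt{\lambda_{j}}r)$ and $a'(r)=\sqrt{\lambda_{j}}\coth(\sqrt{\lambda_{j}}r)f$, while Neumann gives $a(u)=f\cosh(\sqrt{\lambda_{j}}u)/\cosh(\sqrt{\lambda_{j}}r)$ and $a'(r)=\sqrt{\lambda_{j}}\tanh(\sqrt{\lambda_{j}}r)f$; the identities $\coth x=1+\frac{2e^{-x}}{e^{x}-e^{-x}}$ and $\tanh x=1-\frac{2e^{-x}}{e^{x}+e^{-x}}$ exhibit these as $\sqrt{\B_{Y}^{2}}$ plus the stated exponentially small corrections. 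On a harmonic mode $a''=0$ yields $a(u)=(u/r)f$ under Dirichlet (so $a'(r)=1/r$) and $a(u)=f$ under Neumann (so $a'(r)=0$). Reading off $Q_{q,1,\mathfrak{D}}(r)$ from these mode-by-mode normal derivatives and adding $Q_{q,2}$ then produces each of the three formulas in the lemma.

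The principal obstacle is the middle step: recognising ${\mathcal P}_{-,{\mathcal L}_{0}}((\B\phi)|_{Y_{0}})=0$ as an honest Neumann condition on the complementary subspace $\Imm{\mathcal P}_{+,{\mathcal L}_{1}}$. Once the intertwining $\gamma^{-1}{\mathcal P}_{-,{\mathcal L}_{0}}\gamma={\mathcal P}_{+,{\mathcal L}_{1}}$ and the inclusion $(\nabla^{Y}+\Gamma^{Y}\nabla^{Y}\Gamma^{Y})(\Imm\Gamma^{Y}\nabla^{Y}\Gamma^{Y}\oplus\Gamma^{Y}\mathcal{K})\subset\Imm\nabla^{Y}$ are secured, the lemma is a direct assembly of the spectral computations.
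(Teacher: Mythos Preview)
Your argument is correct and is precisely the ``straightforward'' computation the paper has in mind (the paper gives no proof beyond that word). You identify the key point accurately: the boundary conditions ${\mathcal P}_{-, {\mathcal L}_{0}}$ and ${\mathcal P}_{+, {\mathcal L}_{1}}$ at $Y_{0}$ decouple, via the intertwining $\gamma^{-1}{\mathcal P}_{-, {\mathcal L}_{0}}\gamma = {\mathcal P}_{+, {\mathcal L}_{1}}$ and the inclusion ${\mathcal A}\bigl(\Imm {\mathcal P}_{+, {\mathcal L}_{1}}\bigr)\subset \Imm {\mathcal P}_{-, {\mathcal L}_{0}}$, into a Dirichlet condition on one Hodge summand and a Neumann condition on the complementary one, after which the cylinder problem is a scalar ODE and the $\coth$/$\tanh$ identities finish the job.
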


\vspace{0.2 cm}

We next discuss
$\lim_{r \rightarrow \infty} \left(
\log \Det R_{q, {\mathcal P}_{-, {\mathcal L}_{0}}/{\mathcal P}_{+, {\mathcal L}_{1}}}(r) - \log \Det R_{q, \rel}(r) \right)$
when $H^{q}(M, Y ; E) = \{ 0 \}$ for each $0 \leq q \leq m$.
The Poincar\'e duality and long exact sequence imply that $ H^{q}(M ; E) = H^{q}(Y ; E|_{Y}) = 0$ for each $0 \leq q \leq m$.
Then Lemma \ref{Lemma:1.2} and Lemma \ref{Lemma:2.1} show that $R_{q, {\mathcal P}_{-, {\mathcal L}_{0}}}(r)$,
$R_{q, {\mathcal P}_{+, {\mathcal L}_{1}}}(r)$ and $R_{q, \rel}(r)$ are invertible operators and

\begin{eqnarray*}
\lim_{r \rightarrow \infty} R_{q, {\mathcal P}_{-, {\mathcal L}_{0}}/{\mathcal P}_{+, {\mathcal L}_{1}}}(r) \hspace{0.1 cm} = \hspace{0.1 cm}
\lim_{r \rightarrow \infty} R_{q, \rel}(r) \hspace{0.1 cm} = \hspace{0.1 cm}
Q_{q, 2} \hspace{0.1 cm} + \hspace{0.1 cm}
\left( \begin{array}{clcr} \sqrt{\B_{Y, q}^{2}} & 0 \\ 0 &  \sqrt{\B_{Y, q-1}^{2}}  \end{array}  \right)
\hspace{0.1 cm} = \hspace{0.1 cm} Q_{q, 2} \hspace{0.1 cm} + \hspace{0.1 cm} | {\mathcal A} | .
\end{eqnarray*}

\vspace{0.2 cm}

\noindent
The kernel of $\hspace{0.1 cm} Q_{q, 2} \hspace{0.1 cm} + \hspace{0.1 cm} | {\mathcal A} | \hspace{0.1 cm}$ is described as follows.
For $f \in \Omega^{q}(M_{2}, E)|_{Y}$, choose $\psi \in \Omega^{q}(M_{2}, E)$ such that
$ \B^{2} \psi = 0$ and $\psi|_{Y} = f$.
Then,

\begin{eqnarray}  \label{E:2.55}
0 \hspace{0.1 cm} = \hspace{0.1 cm} \langle \B^{2} \psi, \hspace{0.1 cm} \psi \rangle & = &
\langle \B \psi, \hspace{0.1 cm} \B \psi \rangle \hspace{0.1 cm} + \hspace{0.1 cm}
\langle (\B \psi)|_{Y}, \hspace{0.1 cm} ({\mathcal \gamma} \psi)|_{Y} \rangle_{Y}  \nonumber \\
& = & \langle \B \psi, \hspace{0.1 cm} \B \psi \rangle \hspace{0.1 cm} + \hspace{0.1 cm}
\langle (\nabla_{\partial_{u}} \psi + {\mathcal A} \psi)|_{Y}, \hspace{0.1 cm} f \rangle_{Y}  \nonumber \\
& = & \parallel \B \psi \parallel^{2} \hspace{0.1 cm} - \hspace{0.1 cm}
\langle Q_{q, 2} f, \hspace{0.1 cm} f \rangle_{Y} \hspace{0.1 cm} + \hspace{0.1 cm}
\langle {\mathcal A} f, \hspace{0.1 cm} f \rangle_{Y} ,
\end{eqnarray}

\noindent
which leads to

\begin{eqnarray}  \label{E:2.56}
\langle (Q_{q, 2} + |{\mathcal A}|)f, \hspace{0.1 cm} f \rangle_{Y} & = &
\parallel \B \psi \parallel^{2} \hspace{0.1 cm} + \hspace{0.1 cm}
\langle (|{\mathcal A}| + {\mathcal A})f, \hspace{0.1 cm} f \rangle_{Y}.
\end{eqnarray}

\noindent
Hence, $f \in \Ker (Q_{q, 2} + |{\mathcal A}|)$ if and only if $\B \psi = 0$ and $(|{\mathcal A}| + {\mathcal A})f = 0$.
From the assumption $H^{\bullet}(Y, E|_{Y}) = 0$ ${\mathcal A}$ is an invertible operator,
which shows that $\psi$ is expressed, on a collar neighborhood of $Y$, by

\begin{equation}  \label{E:2.77}
\psi = \sum_{\lambda_{j} \in \Spec ({\mathcal A}) \atop \lambda_{j} < 0} a_{j} e^{- \lambda_{j} u} \phi_{j}, \qquad \text{where}
\quad {\mathcal A} \phi_{j} = \lambda_{j} \phi_{j}.
\end{equation}

Let $M_{\infty} := \left( (- \infty, 0] \times Y \right) \cup_{Y} M_{2}$.
We can extend $E$ and $\B$ canonically to $M_{\infty}$, which we denote by
$E_{\infty}$ and $\B_{\infty}$. Then $\psi$ in (\ref{E:2.77}) can be extended to $M_{\infty}$ as an $L^{2}$-solution of $\B_{\infty}$.
Hence,
$$
\Ker (Q_{q, 2} + |{\mathcal A}|) = \{ \psi|_{Y} \mid \psi \hspace{0.2 cm} \text{is} \hspace{0.2 cm} \text{an} \hspace{0.2 cm}
L^{2}\text{-solution} \hspace{0.2 cm} \text{of} \hspace{0.2 cm} \B_{\infty} \hspace{0.2 cm} \text{in}
\hspace{0.2 cm} \Omega^{q}(M_{\infty}, E_{\infty}) \hspace{0.1 cm} \}.
$$

\vspace{0.2 cm}

\noindent
It is a well known fact (Proposition 4.9 in [1]) that the space of $L^{2}$-solutions of $\B_{\infty}$ is isomorphic to the image of
$H^{\bullet}(M, Y ; E) \rightarrow H^{\bullet}(M ; E)$, which is zero under our assumption. This shows that $(Q_{q, 2} + |{\mathcal A}|)$
is injective and hence invertible, which leads to the following result.

\vspace{0.2 cm}

\begin{lemma} \label{Lemma:2.99}
We assume that for each $0 \leq q \leq m$, $H^{q}(M ; E) = H^{q}(M, Y ; E) = \{ 0 \}$.
Then
\begin{eqnarray*}
\lim_{r \rightarrow \infty} \log \Det R_{q, {\mathcal P}_{-, {\mathcal L}_{0}}/{\mathcal P}_{+, {\mathcal L}_{1}}}(r)
\hspace{0.1 cm} = \hspace{0.1 cm}
\log \Det \lim_{r \rightarrow \infty} R_{q, \rel}(r) \hspace{0.1 cm} = \hspace{0.1 cm}
\log \Det \left( Q_{q, 2} \hspace{0.1 cm} + \hspace{0.1 cm} | {\mathcal A}| \right) .
\end{eqnarray*}
\end{lemma}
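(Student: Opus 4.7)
The plan is to reduce the convergence of zeta-determinants to convergence of Fredholm determinants under smoothing perturbations. First I would observe that the hypothesis $H^q(M;E) = H^q(M,Y;E) = 0$, together with Poincar\'e duality and the long exact sequence of the pair $(M,Y)$, forces $H^\bullet(Y, E|_{Y}) = 0$; hence $\Ker \B_{Y,q}^{2} = 0$ for every $q$ and all of the dimensions $l_{q}, l_{q}^{\pm}$ vanish. In particular $\mathcal{A}$ is invertible and the $\frac{1}{r}$ and ``$0$ on $\Ker \B_{Y}^{2}$'' branches in Lemma \ref{Lemma:2.8} disappear.

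With $\Ker \B_{Y}^{2} = 0$, Lemma \ref{Lemma:2.8} then yields a uniform presentation
$$R_{q, \frak{D}}(r) \;=\; (Q_{q,2} + |\mathcal{A}|) \;+\; K_{q, \frak{D}}(r), \qquad \frak{D} \in \{\mathcal{P}_{-, \mathcal{L}_{0}},\, \mathcal{P}_{+, \mathcal{L}_{1}},\, \rel\},$$
where $K_{q, \frak{D}}(r)$ is a function of $\sqrt{\B_{Y}^{2}}$ given on each spectral summand by an expression of the form $\pm \frac{2\sqrt{\B_{Y}^{2}} e^{-\sqrt{\B_{Y}^{2}} r}}{e^{\sqrt{\B_{Y}^{2}} r}\,\pm\, e^{-\sqrt{\B_{Y}^{2}} r}}$. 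Since $\Spec \sqrt{\B_{Y}^{2}}$ is bounded below by some $\lambda_{0} > 0$ and obeys Weyl growth, $K_{q, \frak{D}}(r)$ is smoothing and its trace norm decays like $e^{-2 \lambda_{0} r}$ as $r \to \infty$.

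Now the discussion preceding the lemma, combined with Proposition 4.9 in [1], shows that $\Ker(Q_{q,2} + |\mathcal{A}|)$ is isomorphic to the image of $H^{q}(M, Y; E) \to H^{q}(M; E)$, which is zero by assumption. Since $Q_{q,2} + |\mathcal{A}|$ is a non-negative self-adjoint elliptic pseudodifferential operator of order $1$ (by Lemma \ref{Lemma:2.1}), it is therefore invertible. Consequently for $r$ large enough $R_{q, \frak{D}}(r)$ is also invertible and factors as
$$R_{q, \frak{D}}(r) \;=\; \bigl(Q_{q,2} + |\mathcal{A}|\bigr)\bigl(I + (Q_{q,2} + |\mathcal{A}|)^{-1} K_{q, \frak{D}}(r)\bigr),$$
in which the second factor is $I$ plus a trace-class operator whose trace norm tends to $0$.

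To pass this factorization to determinants, I would invoke the standard fact that if $A$ is a positive self-adjoint invertible elliptic pseudodifferential operator and $B$ is smoothing with $A + B$ invertible, then
$$\log \Det(A + B) - \log \Det A \;=\; \log \ddet\bigl(I + A^{-1} B\bigr),$$
the right-hand side being a Fredholm determinant; this follows by differentiating $\log \Det(A + tB)$ in $t$ and noting that all heat-kernel anomaly terms vanish because $B$ is smoothing (the same mechanism underlying the BFK formula used in Corollary \ref{Corollary:2.3}). Applying this with $A = Q_{q,2} + |\mathcal{A}|$ and $B = K_{q, \frak{D}}(r)$ and letting $r \to \infty$, the Fredholm determinant tends to $1$, giving both claimed equalities. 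The main obstacle is this last step: transferring trace-norm operator convergence into convergence of \emph{zeta}-regularized determinants, which is delicate since zeta regularization is non-local; the key input is precisely that the relative perturbation $K_{q, \frak{D}}(r)$ is smoothing, so the full heat trace expansions of the two operators coincide and only the Fredholm correction survives.
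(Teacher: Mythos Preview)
Your proposal is correct and follows the same approach as the paper: observe that the acyclicity hypothesis kills $H^{\bullet}(Y,E|_{Y})$, use Lemma~\ref{Lemma:2.8} to see that all three Dirichlet-to-Neumann operators converge to $Q_{q,2}+|\mathcal{A}|$, and use the identification of $\Ker(Q_{q,2}+|\mathcal{A}|)$ with the space of $L^{2}$-solutions of $\B_{\infty}$ (hence with the image of $H^{q}(M,Y;E)\to H^{q}(M;E)$) to conclude that the limit operator is invertible. The paper stops there and states the lemma without further comment, treating the passage from operator convergence to convergence of zeta-determinants as understood; your write-up makes this step explicit via the factorization $R_{q,\frak{D}}(r)=(Q_{q,2}+|\mathcal{A}|)\bigl(I+(Q_{q,2}+|\mathcal{A}|)^{-1}K_{q,\frak{D}}(r)\bigr)$ and the Fredholm-determinant identity $\log\Det(A+B)-\log\Det A=\log\ddet(I+A^{-1}B)$ for smoothing $B$, which is exactly the right mechanism and is what the authors are implicitly relying on.
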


\vspace{0.2 cm}

\noindent
Corollary \ref{Corollary:2.3} and Lemma \ref{Lemma:2.7} together with Lemma \ref{Lemma:2.99} lead to the following result.

\vspace{0.2 cm}

\begin{corollary} \label{Corollary:2.10}
We assume that for each $0 \leq q \leq m$, $H^{q}(M ; E) = H^{q}(M, Y ; E) = \{ 0 \}$.
Then :
\begin{eqnarray*}
& (1) & \lim_{r \rightarrow \infty}
\sum_{q=0}^{m} (-1)^{q+1} q \cdot \left( \log \Det_{2 \theta} \B^{2}_{q, {\widetilde {\mathcal P}}_{0}}(r) -
\log \Det_{2 \theta} \B^{2}_{q, \rel}(r) \right) = \frac{1}{4} \sum_{q = 0}^{m-1} \log \Det_{2 \theta} \B_{Y, q}^{2}.  \\
& (2) & \lim_{r \rightarrow \infty}
\sum_{q=0}^{m} (-1)^{q+1} q \cdot \left( \log \Det_{2 \theta} \B^{2}_{q, {\widetilde {\mathcal P}}_{1}}(r) -
\log \Det_{2 \theta} \B^{2}_{q, \rel}(r) \right) = - \frac{1}{4} \sum_{q = 0}^{m-1} \log \Det_{2 \theta} \B_{Y, q}^{2}.  \\
& (3) & \lim_{r \rightarrow \infty}
\sum_{q=0}^{m} (-1)^{q+1} q \cdot \left( \log \Det_{2 \theta} \B^{2}_{q, {\mathcal P}_{-, {\mathcal L}_{0}}}(r) -
\log \Det_{2 \theta} \B^{2}_{q, \rel}(r) \right) \\
& & \hspace{1.0 cm} = \hspace{0.1 cm} \lim_{r \rightarrow \infty}
\sum_{q=0}^{m} (-1)^{q+1} q \cdot \left( \log \Det_{2 \theta} \B^{2}_{q, {\mathcal P}_{+, {\mathcal L}_{1}}}(r) -
\log \Det_{2 \theta} \B^{2}_{q, \Abs}(r) \right) \hspace{0.1 cm} = \hspace{0.1 cm} 0 .
\end{eqnarray*}
\end{corollary}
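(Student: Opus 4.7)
The plan is to apply the BFK gluing formula (Corollary~\ref{Corollary:2.3}) to each boundary condition appearing in the statement and subtract in pairs. In every pairwise difference, the contributions from $M_{2}$ and the factors $\log 2 \cdot (\zeta_{\B^{2}_{Y,q}}(0) + \zeta_{\B^{2}_{Y,q-1}}(0) + l_{q} + l_{q-1})$ cancel. Under the hypothesis $H^{q}(M;E) = H^{q}(M,Y;E) = 0$, Lemma~\ref{Lemma:1.2} forces $\Ker \B^{2}_{q,{\frak D}} = 0$ for each of ${\frak D} \in \{{\mathcal P}_{-, {\mathcal L}_{0}}, {\mathcal P}_{+, {\mathcal L}_{1}}, \rel, \Abs\}$; hence every Gram matrix $A_{q,{\frak D}}(r)$ is the empty matrix and $\log \ddet A_{q,{\frak D}}(r) = 0$. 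What survives in each difference is the cylinder-determinant difference on $M_{1,r}$, to be handled by Lemma~\ref{Lemma:2.7}, plus the Dirichlet-to-Neumann difference $\log \Det R_{q,{\frak D}}(r) - \log \Det R_{q,\rel}(r)$, to be handled by Lemma~\ref{Lemma:2.99}.

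For~(1), the Poincar\'e duality / long exact sequence argument preceding Lemma~\ref{Lemma:2.99} yields $H^{\bullet}(Y, E|_{Y}) = 0$, so $l_{q} = l_{q}^{\pm} = 0$ for every $q$. Lemma~\ref{Lemma:2.7}(3) then collapses, as $r \to \infty$, to $\sum_{q = \even} \log\Det \B^{2,+}_{Y,q}$: the $\log r$ terms vanish with their $l_{q}^{\pm}$ coefficients, and each $\log {\mathcal C}^{+}_{q}(r) \to 0$ exponentially since $\B^{2,+}_{Y,q}$ is strictly positive. Summing over $q$ with weights $(-1)^{q+1} q$, Lemma~\ref{Lemma:2.99} makes the $R$-difference contribution vanish in the limit, and it remains to verify the spectral identity $\sum_{q = \even} \log \Det \B^{2,+}_{Y,q} = \frac{1}{4} \sum_{q=0}^{m-1} \log \Det \B^{2}_{Y,q}$.

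This identity is established by combining two spectral symmetries on $Y$. First, $\nabla^{Y}$ is a bijective intertwiner of $\B^{2}_{Y}$ between $\Omega^{q}_{+}(Y, E|_{Y})$ and $\Omega^{q+1}_{-}(Y, E|_{Y})$, which together with the Hodge decomposition $\Omega^{q}(Y, E|_{Y}) = \Omega^{q}_{-} \oplus \Omega^{q}_{+} \oplus \mathcal{H}^{q}(Y, E|_{Y})$ gives $\log \Det \B^{2}_{Y,q} = \log \Det \B^{2,+}_{Y,q} + \log \Det \B^{2,+}_{Y,q-1}$. Second, $\Gamma^{Y}$ intertwines $\B^{2}_{Y}$ and maps $\Omega^{q}_{-}$ isomorphically to $\Omega^{m-1-q}_{+}$, so $\log \Det \B^{2,+}_{Y,q} = \log \Det \B^{2,+}_{Y,m-2-q}$. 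Because $m$ is odd, the involution $q \leftrightarrow m-2-q$ of $\{0, 1, \dots, m-2\}$ swaps even and odd indices, so the even- and odd-degree portions of $\sum_{q=0}^{m-2} \log \Det \B^{2,+}_{Y,q}$ coincide; together with the edge conditions $\Omega^{-1}_{+} = \Omega^{m-1}_{+} = 0$, this yields the identity.

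Part~(2) follows by the same argument from Lemma~\ref{Lemma:2.7}(4), producing the negative of the same quantity. For~(3), Lemma~\ref{Lemma:2.7}(5) reduces the cylinder-difference sum to $\frac{m}{2} \chi(Y, E|_{Y}) \log r$, which vanishes since $l_{q} = 0$ forces $\chi(Y, E|_{Y}) = 0$; combined with Lemma~\ref{Lemma:2.99}, this gives the first equality of~(3). The ${\mathcal P}_{+, {\mathcal L}_{1}}$ versus $\Abs$ statement is proved in the same way, using the $\Abs$ analogues of the spectra in Lemma~\ref{Lemma:2.3}(2) and~(4) to produce the matching cylinder-difference formula. \emph{The main obstacle} lies in the third paragraph: correctly combining the two Hodge-type isomorphisms on $Y$ and tracking the edge degrees to recover the factor $\frac{1}{4}$.
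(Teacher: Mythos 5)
Your proposal is correct and follows the route the paper intends but leaves implicit: the paper's entire ``proof'' of Corollary~\ref{Corollary:2.10} is the sentence preceding it citing Corollary~\ref{Corollary:2.3}, Lemma~\ref{Lemma:2.7} and Lemma~\ref{Lemma:2.99}, so you have essentially reconstructed the argument the authors omit. The two bridging facts you supply explicitly --- the spectral identity $\sum_{q = \even} \log \Det \B^{2,+}_{Y,q} = \frac{1}{4} \sum_{q=0}^{m-1} \log \Det \B^{2}_{Y,q}$ via the $\nabla^{Y}$-intertwining and the $\Gamma^{Y}$-symmetry $q \leftrightarrow m-2-q$ with $\Omega^{-1}_{+} = \Omega^{m-1}_{+} = 0$, and the $\Abs$ analogue of Lemma~\ref{Lemma:2.7}(1) read off from Lemma~\ref{Lemma:2.3}(2) and~(4) --- are exactly the gaps a reader must fill, and both are handled correctly.
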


\vspace{0.2 cm}

The following lemma is well known (cf. [4], [20]).

\begin{lemma}  \label{Lemma:2.11}
Let $M$ be a compact manifold with boundary $Y$ and $N$ be a collar neighborhood of $Y$.
We suppose that $\{ g_{t}^{M} \mid - \delta_{0} < t < \delta_{0} \}$ is a family of metrics such that each $g^{M}_{t}$ is a product metric and
does not vary on $N$.
Let ${\frak D}$ be one of the following boundary conditions :
${\widetilde {\mathcal P}}_{0}$, ${\widetilde {\mathcal P}}_{1}$, the absolute or the relative boundary condition.
We denote by $\B^{2}_{q, {\frak D}}(t)$ the square of the odd signature operator acting on $q$-forms subject to ${\frak D}$ with respect to the metric $g^{M}_{t}$.
If $H^{q}(M ; E) = H^{q}(M, Y ; E) = \{ 0 \}$ for each $0 \leq q \leq m$, then we have
$$
\frac{d}{dt} \left( \sum_{q=0}^{m} (-1)^{q+1} \cdot q \cdot \log \Det \B^{2}_{q, {\frak D}}(t) \right) \hspace{0.1 cm} = \hspace{0.1 cm} 0.
$$
\end{lemma}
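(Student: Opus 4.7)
The plan is to adapt the standard Ray--Singer variational argument of [4] to this boundary setting, exploiting that $\dot g^M_t := \frac{d}{dt} g^M_t$ vanishes on $N$, so that the boundary condition ${\frak D}$ plays no role in the variational identities.

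First I would set $\alpha(t) := \frac{d}{dt}\Gamma(g^M_t)$, a smooth bundle endomorphism of $\Lambda^\bullet T^*M \otimes E$ with $\supp \alpha(t) \subset M \setminus N$. Differentiating $\Gamma^2 = \Id$ yields $\alpha\Gamma + \Gamma\alpha = 0$, and since $\nabla$ is metric-independent, $\dot{\B} = \alpha\nabla + \nabla\alpha$. The standard Duhamel variation formula for $\zeta$-determinants gives
\[
\frac{d}{dt}\log\Det_{2\theta}\B^2_{q,{\frak D}}(t) \;=\; -\frac{d}{ds}\bigg|_{s=0}\frac{1}{\Gamma(s)}\int_0^\infty t^{s-1}\,\Tr\bigl(\dot{\B^2_q}\,e^{-t\B^2_{q,{\frak D}}}\bigr)\,dt,
\]
so it suffices to prove that the Mellin transform of $F(t) := \sum_{q=0}^m (-1)^{q+1} q\,\Tr\bigl(\dot{\B^2_q}\,e^{-t\B^2_{q,{\frak D}}}\bigr)$ is regular and vanishes at $s=0$.

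By Lemma~\ref{Lemma:1.2} and the acyclicity hypothesis, each $\B^2_{q,{\frak D}}(t)$ is invertible, so there is no kernel contribution and the Hodge decomposition $\Omega^\bullet(M,E) = \Imm\nabla \oplus \Imm\,\Gamma\nabla\Gamma$ holds under ${\frak D}$. Using $\alpha\Gamma = -\Gamma\alpha$ and the cyclic property of the trace, I would rewrite $F(t)$ as the sum of a total $t$-derivative of a supertrace involving $\alpha$ and a term whose supertrace cancels identically on odd-dimensional $M$, following the template of Proposition~5.5 of [4]. The total $t$-derivative, after Mellin transformation and evaluation at $s=0$, collapses to boundary values in $t$: by acyclicity the $t \to \infty$ term decays exponentially, and by the odd-dimensionality of $M$ the $t \to 0^+$ term yields a local heat invariant which vanishes.

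The principal obstacle is to verify that the operator manipulations producing these trace identities generate no boundary contribution from $Y$. This is guaranteed precisely by $\supp \alpha \subset M \setminus N$: every pairing of $\alpha$ with $\nabla$ or $\Gamma$ appearing in the Green-type integration-by-parts of Lemma~\ref{Lemma:1.1} is supported in the interior, so the ${\frak D}$-boundary terms never arise. Consequently the closed-manifold identities of [4,\,20] transfer verbatim, yielding the desired vanishing of the derivative.
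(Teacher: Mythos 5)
Your proposal captures the intended mechanism: the paper gives no proof of Lemma~\ref{Lemma:2.11}, only citations to [4] and [20], and the standard Ray--Singer/Lott--Rothenberg anomaly computation with the key observation that $\alpha := \frac{d}{dt}\Gamma(g^{M}_{t})$ has $\supp\alpha\subset M\setminus N$ is exactly the argument those references supply.

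One step deserves to be made fully explicit, since the cited references do not actually treat the nonstandard conditions $\widetilde{\mathcal P}_{0}$, $\widetilde{\mathcal P}_{1}$. To Abel-sum $\sum_{q}(-1)^{q+1}q\,\Tr\bigl(\dot{\B^{2}_{q}}\,e^{-u\B^{2}_{q,\frak D}}\bigr)$ into a total $u$-derivative one must pass $\nabla$ (or $\Gamma\nabla\Gamma$) through the heat semigroup, i.e.\ one needs $e^{-u\B^{2}_{q,\frak D}}\nabla = \nabla e^{-u\B^{2}_{q-1,\frak D}}$ (and its $\Gamma\nabla\Gamma$ companion). For the absolute and relative conditions one of these two intertwinings holds exactly, which is the basis of [20]; for $\widetilde{\mathcal P}_{0}$ and $\widetilde{\mathcal P}_{1}$ it is not automatic that $\nabla$ or $\Gamma\nabla\Gamma$ preserves $\Dom\B^{2}_{q,\frak D}$, so the intertwining need only hold up to an operator supported arbitrarily close to $Y$. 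Your remark about $\supp\alpha\subset M\setminus N$ and Lemma~\ref{Lemma:1.1} is the right reason this error is harmless, but the justification should not appeal to Green's formula directly (the heat semigroup, not a differential operator, appears on the right); rather, one should observe that the Schwartz kernel of the defect operator is $O(e^{-c/u})$ on $\supp\alpha\times\supp\alpha$ by finite propagation speed / locality of the small-time heat kernel, so multiplication by $\alpha$ on both sides kills all boundary corrections up to terms that do not affect $\zeta'(0)$. With this detail supplied, the proof is sound.
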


\vspace{0.3 cm}

We fix $\delta_{0} > 0$ sufficiently small and
choose a smooth function $\hspace{0.1 cm} f(r, u) : [0, \infty) \times [0, 1] \rightarrow [0, \infty), \hspace{0.1 cm}$ ($r \geq 1$) such that
$$
\supp_{u}f(r, u) \subset [\delta_{0}, 1 - \delta_{0}], \quad \int_{0}^{1} f(r, u) du = r - 1 , \quad
\text{and} \quad f(1, u) \equiv 0.
$$
Setting $F(r, u) = u + \int_{0}^{u} f(r, t) dt$,
$\quad F_{r} := F(r, \cdot) : [0, 1] \rightarrow [0, r] \hspace{0.1 cm}$ is a diffeomorphism satisfying

\begin{eqnarray*}
F_{r}(u) = \begin{cases} u  \hspace{1.2 cm} \text{for} \hspace{0.2 cm}  0 \leq u \leq \delta_{0}  \\
 u + r -1  \hspace{0.6 cm} \text{for} \hspace{0.2 cm}  1 - \delta_{0} \leq u \leq 1.
\end{cases}
\end{eqnarray*}

\noindent
Let $g^{M}_{r}$ be a metric on $M_{r} : = \left( [0, r] \times Y \right) \cup_{\{ r \} \times Y} M_{2}$
which is a product one on $[0, r] \times Y$.
Then $F_{r}^{\ast}g^{M}_{r}$ is a metric on $M$, which is
$\left( \begin{array}{clcr} F^{\prime}(u)^{2} & 0 \\ 0 & g_{Y} \end{array} \right)$ on $[0, 1] \times Y$.
Hence, $F_{r}^{\ast}g^{M}_{r}$ is a metric on $M$ which is a product one near $Y$. Furthermore,
$(M, F_{r}^{\ast}g^{M}_{r})$ and $(M_{r}, g^{M}_{r})$ are isometric.
Let ${\tilde \B}(r)$ and $\B(r)$ be the odd signature operators defined on $M$ and $M_{r}$ associated to the metrics $F_{r}^{\ast}g^{M}_{r}$
and $g^{M}_{r}$, respectively.
We now assume that for each $0 \leq q \leq m$, $H^{q}(M ; E) = H^{q}(M, Y ; E) = \{ 0 \}$.
Then ${\widetilde \B}^{2}_{q, {\frak D}}(r)$ and $\B^{2}_{q, {\frak D}}(r)$ are invertible operators.
Lemma \ref{Lemma:2.11} leads to the following equalities.

\begin{eqnarray*}
& & \sum_{q=0}^{m} (-1)^{q+1} \cdot q \cdot
\left( \log \Det_{2 \theta} \B_{q, {\widetilde {\mathcal P}}_{0}}^{2} - \log \Det_{2 \theta} \B_{q, \rel}^{2} \right)  \\
& = & \sum_{q=0}^{m} (-1)^{q+1} \cdot q \cdot
\left( \log \Det_{2 \theta} {\tilde \B(r)^{2}_{q, {\widetilde {\mathcal P}}_{0}}} -
\log \Det_{2 \theta} {\tilde \B(r)^{2}_{q, \rel}} \right)  \\
& = & \sum_{q=0}^{m} (-1)^{q+1} \cdot q \cdot
\left( \log \Det_{2 \theta} \B^{2}_{q, {\widetilde {\mathcal P}}_{0}}(r) - \log \Det_{2 \theta} \B^{2}_{q, \rel}(r) \right)  \\
& = & \lim_{r \rightarrow \infty} \sum_{q=0}^{m} (-1)^{q+1} \cdot q \cdot
\left( \log \Det_{2 \theta} \B^{2}_{q, {\widetilde {\mathcal P}}_{0}}(r) -
\log \Det_{2 \theta} \B^{2}_{q, \rel}(r) \right)
\hspace{0.1 cm} = \hspace{0.1 cm}  \frac{1}{4} \sum_{q = 0}^{m-1} \log \Det_{2 \theta} \B_{Y, q}^{2}.
\end{eqnarray*}

\noindent
Similarly, we have
$$
\sum_{q=0}^{m} (-1)^{q+1} \cdot q \cdot
\left( \log \Det_{2 \theta} \B_{q, {\widetilde {\mathcal P}}_{1}}^{2} - \log \Det_{2 \theta} \B_{q, \rel}^{2} \right)
\hspace{0.1 cm} = \hspace{0.1 cm}  - \frac{1}{4} \sum_{q = 0}^{m-1} \log \Det_{2 \theta} \B_{Y, q}^{2}.
$$

\noindent
Corollary \ref{Corollary:2.3}, Corollary \ref{Corollary:2.10}, the Poincar\'e duality and the above equality
lead to the following theorem, which is the main result of this section.

\vspace{0.2 cm}

\begin{theorem}  \label{Theorem:2.11}
Let $(M, g^{M})$ be a compact Riemannian manifold with boundary $Y$ and $g^{M}$ be a product metric near $Y$.
We assume that for each $0 \leq q \leq m$, $H^{q}(M ; E) = H^{q}(M, Y ; E) = \{ 0 \}$. Then :

\begin{eqnarray*}
& (1) &  \sum_{q=0}^{m} (-1)^{q+1} \cdot q \cdot \log \Det_{2 \theta} \B_{q, {\widetilde {\mathcal P}}_{0}}^{2}  \hspace{0.1 cm} = \hspace{0.1 cm}
\sum_{q=0}^{m} (-1)^{q+1} \cdot q \cdot \log \Det_{2 \theta} \B_{q, \rel}^{2}
\hspace{0.1 cm} + \hspace{0.1 cm} \frac{1}{4} \sum_{q = 0}^{m-1} \log \Det_{2 \theta} \B_{Y, q}^{2}  \\
& (2) &  \sum_{q=0}^{m} (-1)^{q+1} \cdot q \cdot \log \Det_{2 \theta} \B_{q, {\widetilde {\mathcal P}}_{1}}^{2}  \hspace{0.1 cm} = \hspace{0.1 cm}
\sum_{q=0}^{m} (-1)^{q+1} \cdot q \cdot \log \Det_{2 \theta} \B_{q, \rel}^{2}
\hspace{0.1 cm} - \hspace{0.1 cm} \frac{1}{4} \sum_{q = 0}^{m-1} \log \Det_{2 \theta} \B_{Y, q}^{2}  \\
& (3) & \sum_{q=0}^{m} (-1)^{q+1} \cdot q \cdot \log \Det_{2 \theta} \B_{q, {\mathcal P}_{-, {\mathcal L}_{0}}}^{2}   \hspace{0.1 cm} = \hspace{0.1 cm}
\sum_{q=0}^{m} (-1)^{q+1} \cdot q \cdot \log \Det_{2 \theta} \B_{q, {\mathcal P}_{+, {\mathcal L}_{1}}}^{2}  \\
&  & \hspace{0.2 cm} = \hspace{0.1 cm} \sum_{q=0}^{m} (-1)^{q+1} \cdot q \cdot \log \Det_{2 \theta} \B_{q, \rel}^{2}
\hspace{0.1 cm} = \hspace{0.1 cm} \sum_{q=0}^{m} (-1)^{q+1} \cdot q \cdot \log \Det_{2 \theta} \B_{q, \Abs}^{2}
\end{eqnarray*}
\end{theorem}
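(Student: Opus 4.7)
The plan is to follow the adiabatic-stretching argument laid out just above the theorem and finish with Poincar\'e duality. The displayed chain immediately preceding the statement already proves part (1): Lemma~\ref{Lemma:2.11} allows the alternating sum computed with the metric $g^M$ to be replaced by the same quantity for the pulled-back metric $F_r^{\ast}g_r^M$, which agrees with $g^M$ on the collar; the isometry $(M, F_r^{\ast}g_r^M)\cong(M_r,g_r^M)$ then transports it to the stretched operator $\B(r)$; since this is independent of $r$, letting $r\to\infty$ and applying Corollary~\ref{Corollary:2.10}(1) yields the correction $\tfrac14\sum_{q}\log\Det_{2\theta}\B^2_{Y,q}$. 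Part~(2) follows by the same chain with Corollary~\ref{Corollary:2.10}(2) in place of (1), producing the opposite sign.

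For part (3) the identical scheme applies to the pure boundary conditions $\mathcal{P}_{-,\mathcal{L}_0}$ and $\mathcal{P}_{+,\mathcal{L}_1}$, which are among the admissible choices of $\mathfrak{D}$ in Lemma~\ref{Lemma:2.11}. Coupling the metric-invariance and isometry steps with Corollary~\ref{Corollary:2.10}(3) gives
$$
\sum_{q=0}^{m}(-1)^{q+1}\,q\,\log\Det_{2\theta}\B^2_{q,\mathcal{P}_{-,\mathcal{L}_0}} \;=\; \sum_{q=0}^{m}(-1)^{q+1}\,q\,\log\Det_{2\theta}\B^2_{q,\rel},
$$
together with the analogous identity for $\mathcal{P}_{+,\mathcal{L}_1}$ versus $\Abs$. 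The remaining link is the equality of the relative and absolute alternating sums. Here I would invoke Poincar\'e duality: since $\nabla$ is Hermitian, the Hodge star $\ast_M$ descends to an isometric isomorphism $\Omega^q(M,E)_{\rel}\to\Omega^{m-q}(M,E)_{\Abs}$ intertwining $\B^2_q$ with $\B^2_{m-q}$, so $\log\Det_{2\theta}\B^2_{q,\rel}=\log\Det_{2\theta}\B^2_{m-q,\Abs}$. Substituting this into the relative sum, reindexing $p=m-q$, and using that $m$ is odd identifies it with the absolute sum; this is the classical Poincar\'e-duality symmetry of the Ray--Singer analytic torsion under the interchange of relative and absolute boundary conditions in the acyclic Hermitian setting.

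The main obstacle is not the formal composition above but the three analytic inputs it rests on: the metric invariance of the alternating determinant (Lemma~\ref{Lemma:2.11}), the asymptotic computation of the Dirichlet-to-Neumann determinants that produces Corollary~\ref{Corollary:2.10}, and the explicit cylinder spectra of Lemma~\ref{Lemma:2.3}. Once these are granted, the theorem decomposes into two clean reductions---an adiabatic limit transporting $\mathcal{P}_{\pm,\mathcal{L}_{0/1}}$ to $\rel$ or $\Abs$, and a Poincar\'e-duality step equating $\rel$ with $\Abs$---from which all three assertions follow.
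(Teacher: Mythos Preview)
Your argument is essentially the paper's own: the displayed chain preceding the theorem handles (1) and (2), Corollary~\ref{Corollary:2.10}(3) combined with metric invariance handles the two outer equalities in (3), and Poincar\'e duality via $\Gamma$ (together with the Hodge~$\ast$ for $\rel$ versus $\Abs$) closes the chain. One small correction: as written, Lemma~\ref{Lemma:2.11} lists only ${\widetilde{\mathcal P}}_{0}$, ${\widetilde{\mathcal P}}_{1}$, $\Abs$, $\rel$ as admissible ${\frak D}$, not ${\mathcal P}_{-,{\mathcal L}_{0}}$ or ${\mathcal P}_{+,{\mathcal L}_{1}}$; the metric invariance you need for the pure conditions is the content of the companion paper~[14] rather than the stated lemma, so your citation should point there.
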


\vspace{0.3 cm}

\section{Comparison of the eta invariants}

\vspace{0.2 cm}

In this section we are going to compare the eta-invariant $\eta(\B_{\even, {\mathcal P}_{-, {\mathcal L}_{0}}})$
with $\eta(\B_{\even, \Pi_{>, {\mathcal L}_{0}}})$, the eta-invariant of $\B_{\even}$ subject to ${\mathcal P}_{-, {\mathcal L}_{0}}$
and the generalized APS boundary condition $\Pi_{>, {\mathcal L}_{0}}$,
where $\Pi_{>} : \Omega^{\even}(M, E)|_{Y} \rightarrow \Omega^{\even}(M, E)|_{Y}$ is the orthogonal projection onto the space spanned by
the positive eigenspaces of ${\mathcal A}$ (cf. (\ref{E:1.9})).
For this purpose we are going to follow the arguments in [6] strongly.
Throughout this section we write the odd signature operator acting on $\Omega^{\even}(M, E)$ by $\B$
rather than $\B_{\even}$ for simplicity.
We begin with the descriptions of $\Imm \Pi_{>}$ and $\Imm {\mathcal P}_{-}$ as graphs of some unitary operators.

We denote by $\left( \Omega^{\even}(M, E)|_{Y} \right)^{\ast}$ the orthogonal complement of
$\left( \begin{array}{clcr} {\mathcal H}^{\even}(Y, E|_{Y}) \\ {\mathcal H}^{\odd}(Y, E|_{Y}) \end{array} \right)$ in
$\left( \Omega^{\even}(M, E)|_{Y} \right)$.
Then the action of the unitary operator ${\mathcal \gamma}$ splits according to the following decomposition.

$$
{\mathcal \gamma} :
\left( \Omega^{\even}(M, E)|_{Y} \right)^{\ast} \oplus
\left( \begin{array}{clcr} {\mathcal H}^{\even}(Y, E|_{Y}) \\ {\mathcal H}^{\odd}(Y, E|_{Y}) \end{array} \right)
\hspace{0.1 cm} \rightarrow \hspace{0.1 cm} \left( \Omega^{\even}(M, E)|_{Y} \right)^{\ast} \oplus
\left( \begin{array}{clcr} {\mathcal H}^{\even}(Y, E|_{Y}) \\ {\mathcal H}^{\odd}(Y, E|_{Y}) \end{array} \right)
$$

\vspace{0.2 cm}

\noindent
Since $\hspace{0.1 cm} {\mathcal \gamma}^{2} = - \Id \hspace{0.1 cm}$, we denote
the $\pm i$-eigenspace of ${\mathcal \gamma}$ in  $\left( \Omega^{\even}(M, E)|_{Y} \right)^{\ast}$ by
$\hspace{0.1 cm} \left( \Omega^{\even}(M, E)|_{Y} \right)^{\ast}_{\pm i} \hspace{0.1 cm}$, which are

\begin{eqnarray}  \label{E:3.1}
\left( \Omega^{\even}(M, E)|_{Y} \right)^{\ast}_{\pm i} & = & \frac{I \mp i {\mathcal \gamma}}{2}
\left( \Omega^{\even}(M, E)|_{Y} \right)^{\ast} .
\end{eqnarray}

\vspace{0.2 cm}

\noindent
It is a well known fact that $\Imm \Pi_{>}$ and $\Imm {\mathcal P}_{-}$ are expressed by the graphs of some unitary operators from
$\left( \Omega^{\even}(M, E)|_{Y} \right)^{\ast}_{+i}$ to $\left( \Omega^{\even}(M, E)|_{Y} \right)^{\ast}_{-i}$.
When restricted to $\left( \Omega^{\even}(M, E)|_{Y} \right)^{\ast}$, $B_{Y}^{2}$ is an invertible operator and we denote its inverse by
$\left( B_{Y}^{2} \right)^{-1}$.
In view of (\ref{E:1.3}) we define $U_{\Pi_{>}}$, $U_{{\mathcal P}_{-}}$ as follows.

\begin{equation} \label{E:3.51}
U_{\Pi_{>}}, \hspace{0.2 cm} U_{{\mathcal P}_{-}}  : \left( \Omega^{\even}(M, E)|_{Y} \right)^{\ast}_{+i}
 \rightarrow  \left( \Omega^{\even}(M, E)|_{Y} \right)^{\ast}_{-i}
\end{equation}

\begin{eqnarray}  \label{E:3.52}
U_{\Pi_{>}} & = & (\B_{Y}^{2})^{-\frac{1}{2}} \left( \nabla^{Y} + \Gamma^{Y} \nabla^{Y} \Gamma^{Y} \right) \left( \begin{array}{clcr} 0 & -1 \\
-1 & 0 \end{array} \right)  \nonumber  \\
U_{{\mathcal P}_{-}} & = & (\B_{Y}^{2})^{- 1} \left( (\B_{Y}^{2})^{-} - (\B_{Y}^{2})^{+} \right) \left( \begin{array}{clcr}  1 & 0 \\
0 & 1 \end{array} \right) ,
\end{eqnarray}

\vspace{0.2 cm}

\noindent
where
$(\B_{Y}^{2})^{-} := \nabla^{Y} \Gamma^{Y} \nabla^{Y} \Gamma^{Y} : \Omega^{\bullet}_{-}(Y, E|_{Y}) \rightarrow \Omega^{\bullet}_{-}(Y, E|_{Y})$ and
$(\B_{Y}^{2})^{+} := \Gamma^{Y} \nabla^{Y} \Gamma^{Y} \nabla^{Y} : \Omega^{\bullet}_{+}(Y, E|_{Y}) \rightarrow \Omega^{\bullet}_{+}(Y, E|_{Y})$.
Then $U_{\Pi_{>}}$ and $U_{{\mathcal P}_{-}}$ are well defined $\Psi$DO's and their adjoints are given by

\begin{equation} \label{E:3.53}
U_{\Pi_{>}}^{\ast}, \hspace{0.2 cm} U_{{\mathcal P}_{-}}^{\ast}  : \left( \Omega^{\even}(M, E)|_{Y} \right)^{\ast}_{-i}
 \rightarrow  \left( \Omega^{\even}(M, E)|_{Y} \right)^{\ast}_{+i}
\end{equation}

\begin{eqnarray}  \label{E:3.54}
U_{\Pi_{>}}^{\ast} & = & (\B_{Y}^{2})^{-\frac{1}{2}} \left( \nabla^{Y} + \Gamma^{Y} \nabla^{Y} \Gamma^{Y} \right) \left( \begin{array}{clcr} 0 & -1 \\
-1 & 0 \end{array} \right) \nonumber  \\
U_{{\mathcal P}_{-}}^{\ast} & = & (\B_{Y}^{2})^{- 1} \left( (\B_{Y}^{2})^{-} - (\B_{Y}^{2})^{+} \right) \left( \begin{array}{clcr}  1 & 0 \\
0 & 1 \end{array} \right) .
\end{eqnarray}

\vspace{0.2 cm}

\noindent
The following lemma is straightforward.

\vspace{0.2 cm}

\begin{lemma}  \label{Lemma:3.1}
(1) Both $U_{\Pi_{>}}$ and $U_{{\mathcal P}_{-}}$  are unitary operators satisfying
$$
U_{\Pi_{>}}^{\ast} \hspace{0.1 cm} U_{\Pi_{>}} \hspace{0.1 cm} = \hspace{0.1 cm} U_{{\mathcal P}_{-}}^{\ast} \hspace{0.1 cm}  U_{{\mathcal P}_{-}}
\hspace{0.1 cm} = \hspace{0.1 cm} \Id, \qquad
{\mathcal \gamma} \hspace{0.1 cm} U_{\Pi_{>}} = - \hspace{0.1 cm}  U_{\Pi_{>}} \hspace{0.1 cm}  {\mathcal \gamma}, \qquad
{\mathcal \gamma} \hspace{0.1 cm}  U_{{\mathcal P}_{-}} = - \hspace{0.1 cm}  U_{{\mathcal P}_{-}} \hspace{0.1 cm}  {\mathcal \gamma}.
$$
\noindent
(2) $\hspace{0.2 cm} \Imm \Pi_{>}$ $(\hspace{0.1 cm} \Imm \Pi_{<} \hspace{0.1 cm})$ and $\hspace{0.1 cm} \Imm {\mathcal P}_{-}$
$(\hspace{0.1 cm} \Imm {\mathcal P}_{+} \hspace{0.1 cm})$ are graphs of
$\hspace{0.1 cm} U_{\Pi_{>}}$ $(\hspace{0.1 cm} - \hspace{0.1 cm} U_{\Pi_{>}} \hspace{0.1 cm})$ and
$\hspace{0.1 cm} U_{{\mathcal P}_{-}}$ $( \hspace{0.1 cm} - \hspace{0.1 cm} U_{{\mathcal P}_{-}} \hspace{0.1 cm})$,
respectively, {\it i.e.}
\begin{eqnarray*}
\Imm \Pi_{>} = \{ x + U_{\Pi_{>}} x \mid x \in \left( \Omega^{\even}(M, E)|_{Y} \right)^{\ast}_{+ i} \}, & &
\Imm \Pi_{<} = \{ x - U_{\Pi_{>}} x \mid x \in \left( \Omega^{\even}(M, E)|_{Y} \right)^{\ast}_{+ i} \},  \\
\Imm {\mathcal P}_{-} = \{ x + U_{{\mathcal P}_{-}} x \mid x \in \left( \Omega^{\even}(M, E)|_{Y} \right)^{\ast}_{+ i} \}, & &
\Imm {\mathcal P}_{+} = \{ x - U_{{\mathcal P}_{-}} x \mid x \in \left( \Omega^{\even}(M, E)|_{Y} \right)^{\ast}_{+ i} \}.
\end{eqnarray*}
(3) $\hspace{0.2 cm} U_{\Pi_{>}}$ anticommutes with $U_{{\mathcal P}_{-}}$ in the following sense, {\it i.e.}
$$ U_{\Pi_{>}}^{\ast} \hspace{0.1 cm}  U_{{\mathcal P}_{-}}   =  - \hspace{0.1 cm}   U_{{\mathcal P}_{-}}^{\ast}  \hspace{0.1 cm} U_{\Pi_{>}}, \qquad
 U_{\Pi_{>}} \hspace{0.1 cm}  U_{{\mathcal P}_{-}}^{\ast}   =  - \hspace{0.1 cm}   U_{{\mathcal P}_{-}}  \hspace{0.1 cm} U_{\Pi_{>}}^{\ast}.
$$
\end{lemma}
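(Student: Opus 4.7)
\medskip

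\noindent\textbf{Strategy.} All three assertions reduce to direct algebraic manipulation of the definitions \eqref{E:3.52} and \eqref{E:3.54}. The plan rests on three ingredients: (i) the identity $(\nabla^Y + \Gamma^Y\nabla^Y\Gamma^Y)^2 = \B_Y^2$; (ii) the fact that the swap matrix $\sigma := \left(\begin{smallmatrix} 0 & -1 \\ -1 & 0 \end{smallmatrix}\right)$ is an involution that commutes with every operator acting identically on the two copies of $\Omega^\bullet(Y,E|_Y)$, in particular with $\nabla^Y + \Gamma^Y\nabla^Y\Gamma^Y$ and with $(\B_Y^2)^{-1/2}$; and (iii) the description of $\epsilon := (\B_Y^2)^{-1}\bigl((\B_Y^2)^- - (\B_Y^2)^+\bigr)$ as $+\Id$ on $\Imm \nabla^Y$ and $-\Id$ on $\Imm\Gamma^Y\nabla^Y\Gamma^Y$. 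Ingredient (iii) immediately gives $\epsilon^2 = \Id$, $\epsilon\sigma = \sigma\epsilon$, $\epsilon(\B_Y^2)^{-1/2} = (\B_Y^2)^{-1/2}\epsilon$, and, crucially, the anticommutation $\epsilon(\nabla^Y + \Gamma^Y\nabla^Y\Gamma^Y) = -(\nabla^Y + \Gamma^Y\nabla^Y\Gamma^Y)\epsilon$, because $\nabla^Y + \Gamma^Y\nabla^Y\Gamma^Y$ exchanges $\Imm \nabla^Y$ and $\Imm\Gamma^Y\nabla^Y\Gamma^Y$.

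\medskip

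\noindent\textbf{Parts (1) and (2).} For unitarity one computes $U_{\Pi_>}^\ast U_{\Pi_>} = (\B_Y^2)^{-1/2}(\nabla^Y + \Gamma^Y\nabla^Y\Gamma^Y)^2\sigma^2(\B_Y^2)^{-1/2} = \Id$ by (i) and (ii), and $U_{\mathcal{P}_-}^\ast U_{\mathcal{P}_-} = \epsilon^2 = \Id$. The relations $\gamma U = -U\gamma$ follow formally because $U$ maps the $(+i)$-eigenspace of $\gamma$ into the $(-i)$-eigenspace. For (2), combining $\mathcal{A} = \sigma(\nabla^Y + \Gamma^Y\nabla^Y\Gamma^Y)$ with $\mathcal{A}^2 = \B_Y^2 \cdot \Id$ yields $|\mathcal{A}| = (\B_Y^2)^{1/2}\cdot\Id$, and (ii) then identifies $\mathcal{A}\,|\mathcal{A}|^{-1}$ with $U_{\Pi_>}$ on $(\Omega^{\even}(M,E)|_Y)^\ast_{+i}$. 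A standard observation, valid for any unitary involution $V$ on $(\Omega^{\even}(M,E)|_Y)^\ast$ that anticommutes with $\gamma$, is that its $+1$- and $-1$-eigenspaces are precisely $\{x \pm Vx : x \in (\Omega^{\even}(M,E)|_Y)^\ast_{+i}\}$; applied to $V = \mathcal{A}\,|\mathcal{A}|^{-1}$ it gives the graph descriptions of $\Imm\Pi_>$ and $\Imm\Pi_<$, and applied to $V = \epsilon$ (whose $+1$- and $-1$-eigenspaces are $\Imm\nabla^Y\oplus\Imm\nabla^Y$ and $\Imm\Gamma^Y\nabla^Y\Gamma^Y\oplus\Imm\Gamma^Y\nabla^Y\Gamma^Y$) it gives the descriptions of $\Imm\mathcal{P}_-$ and $\Imm\mathcal{P}_+$.

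\medskip

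\noindent\textbf{Part (3).} Using ingredients (ii) and (iii) to push $\epsilon$ across the other factors,
\begin{align*}
U_{\Pi_>}^\ast\, U_{\mathcal{P}_-} &= (\B_Y^2)^{-1/2}(\nabla^Y + \Gamma^Y\nabla^Y\Gamma^Y)\sigma\epsilon \\
&= (\B_Y^2)^{-1/2}(\nabla^Y + \Gamma^Y\nabla^Y\Gamma^Y)\epsilon\sigma \\
&= -(\B_Y^2)^{-1/2}\epsilon(\nabla^Y + \Gamma^Y\nabla^Y\Gamma^Y)\sigma \\
&= -\epsilon\,U_{\Pi_>} \;=\; -U_{\mathcal{P}_-}^\ast\, U_{\Pi_>},
\end{align*}
which is the first anticommutation; the second follows by taking adjoints. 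The only slightly delicate points in the whole argument are the anticommutation $\epsilon(\nabla^Y + \Gamma^Y\nabla^Y\Gamma^Y) = -(\nabla^Y + \Gamma^Y\nabla^Y\Gamma^Y)\epsilon$ in (iii), which rests on the orthogonal Hodge-type decomposition $\Omega^\bullet(Y,E|_Y) = \Imm\nabla^Y \oplus \Imm\Gamma^Y\nabla^Y\Gamma^Y \oplus \mathcal{H}^\bullet(Y,E|_Y)$ and the fact that the two summands of $\nabla^Y + \Gamma^Y\nabla^Y\Gamma^Y$ exchange $\Imm\nabla^Y$ and $\Imm\Gamma^Y\nabla^Y\Gamma^Y$; and the matching of $U_{\Pi_>}$ with $\mathcal{A}\,|\mathcal{A}|^{-1}$ in (2), which requires swapping the order of $\sigma$ and $\nabla^Y + \Gamma^Y\nabla^Y\Gamma^Y$ via (ii).
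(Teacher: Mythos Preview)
Your proof is correct and is precisely the kind of direct verification the paper has in mind; the paper itself offers no argument beyond calling the lemma ``straightforward,'' so your write-up simply fills in those routine computations from the definitions \eqref{E:3.52}--\eqref{E:3.54}. The only point worth noting is that in part~(1) the anticommutation $\gamma U = -U\gamma$ is not quite automatic from the fact that $U$ is declared to map the $+i$-eigenspace to the $-i$-eigenspace; one still has to check that the explicit formulas actually do this, which follows (for $U_{\Pi_>}$) from $\beta$ anticommuting and $\Gamma^Y$ commuting with $\nabla^Y+\Gamma^Y\nabla^Y\Gamma^Y$, and (for $U_{\mathcal{P}_-}$) from your ingredient~(iii) together with $\Gamma^Y(\Imm\nabla^Y)=\Imm\Gamma^Y\nabla^Y\Gamma^Y$---but these are one-line checks in the same spirit as the rest.
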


\vspace{0.4 cm}

We define $P(\theta) : \left( \Omega^{\even}(M, E)|_{Y} \right)^{\ast}_{i} \rightarrow  \left( \Omega^{\even}(M, E)|_{Y} \right)^{\ast}_{-i}$ by

\begin{equation}
P(\theta) :=  U_{\Pi_{>}} cos \theta + U_{{\mathcal P}_{-}} sin \theta, \qquad (0 \leq \theta \leq \frac{\pi}{2}).
\end{equation}

\vspace{0.2 cm}

\noindent
Then $P(\theta)$ is a unitary operator satisfying the property (1) in Lemma \ref{Lemma:3.1} and a smooth path connecting $U_{\Pi_{>}}$ and
$U_{{\mathcal P}_{-}}$.
We here note that the orthogonal projections $\hspace{0.1 cm} \Pi_{>}$, ${\mathcal P}_{-} :
\left( \Omega^{\even}(M, E)|_{Y} \right)^{\ast}_{+i} \oplus \left( \Omega^{\even}(M, E)|_{Y} \right)^{\ast}_{-i}
 \rightarrow \left( \Omega^{\even}(M, E)|_{Y} \right)^{\ast}_{+i} \oplus \left( \Omega^{\even}(M, E)|_{Y} \right)^{\ast}_{-i} \hspace{0.1 cm}$
 are expressed as follows.

\vspace{0.2 cm}

$$
\Pi_{>} = \frac{1}{2} \left( \begin{array}{clcr} \Id & U_{\Pi_{>}}^{\ast} \\ U_{\Pi_{>}} & \Id \end{array} \right) {\mathcal P}_{\ast} , \qquad
{\mathcal P}_{-} = \frac{1}{2} \left( \begin{array}{clcr} \Id &  U_{{\mathcal P}_{-}}^{\ast} \\ U_{{\mathcal P}_{-}} & \Id
\end{array} \right) {\mathcal P}_{\ast} ,
$$

\vspace{0.2 cm}

\noindent
where ${\mathcal P}_{\ast}$ is the orthogonal projection onto $\left( \Omega^{\even}(M, E)|_{Y} \right)^{\ast}$.
Let ${\mathcal L}_{0} = \left( \begin{array}{clcr} {\mathcal K} \\ {\mathcal K} \end{array} \right) \cap \hspace{0.1 cm} \left( \Omega^{\even}(M, E)|_{Y} \right)$
and ${\mathcal L}_{1} = \left( \begin{array}{clcr} \Gamma^{Y} {\mathcal K} \\
\Gamma^{Y} {\mathcal K} \end{array} \right) \cap \hspace{0.1 cm} \left( \Omega^{\even}(M, E)|_{Y} \right)$
so that ${\mathcal L}_{0} \oplus {\mathcal L}_{1} = \left( \begin{array}{clcr} {\mathcal H}^{\even}(Y, E|_{Y}) \\ {\mathcal H}^{\odd}(Y, E|_{Y})
\end{array} \right)$.
We denote by ${\mathcal P}_{{\mathcal L}_{0}}$ and ${\mathcal P}_{{\mathcal L}_{1}}$ the orthogonal projections onto ${\mathcal L}_{0}$ and ${\mathcal L}_{1}$.
We define the orthogonal projections ${\mathcal P}_{-, {\mathcal L}_{0}}$ and $\Pi_{>, {\mathcal L}_{0}}$  on $\Omega^{\even}(M, E)|_{Y}$ as follows.

\vspace{0.2 cm}

\begin{equation}  \label{E:3.77}
{\mathcal P}_{-, {\mathcal L}_{0}} := {\mathcal P}_{-} + {\mathcal P}_{{\mathcal L}_{0}}, \qquad
\Pi_{>, {\mathcal L}_{0}} := \Pi_{>} + {\mathcal P}_{{\mathcal L}_{0}}
\end{equation}

\vspace{0.2 cm}

\noindent
We define ${\mathcal P}_{+, {\mathcal L}_{1}}$ and $\Pi_{<, {\mathcal L}_{1}}$ in the same way.
Similarly, we define the orthogonal projection ${\widetilde P}(\theta)$ by

\vspace{0.2 cm}

\begin{equation}
{\widetilde P}(\theta) := \frac{1}{2} \left( \begin{array}{clcr} \Id & P(\theta)^{\ast} \\ P(\theta) & \Id \end{array} \right) {\mathcal P}_{\ast}
+ {\mathcal P}_{{\mathcal L}_{0}}
= \Pi_{>} cos \theta + {\mathcal P}_{-} sin \theta + \frac{1}{2} ( 1 - cos \theta - sin \theta ) \hspace{0.1 cm} {\mathcal P}_{\ast}
+ {\mathcal P}_{{\mathcal L}_{0}}.
\end{equation}

\vspace{0.2 cm}

\noindent
${\widetilde P}(\theta)$ satisfies the following properties.

\vspace{0.2 cm}

\begin{lemma} \label{Lemma:3.2}
(1) ${\mathcal \gamma} \hspace{0.1 cm} {\widetilde P}(\theta) \hspace{0.1 cm} = \hspace{0.1 cm} (I - {\widetilde P}(\theta))
\hspace{0.1 cm} {\mathcal \gamma}, \hspace{0.2 cm}$ and
 $\hspace{0.2 cm} {\widetilde P}(\theta) \hspace{0.1 cm} \B_{Y}^{2} \hspace{0.1 cm} = \hspace{0.1 cm} \B_{Y}^{2} {\widetilde P}(\theta)$.  \newline
(2) ${\widetilde P}(\theta) \hspace{0.1 cm} {\mathcal A} \hspace{0.1 cm} {\widetilde P}(\theta)
\hspace{0.1 cm} = \hspace{0.1 cm} cos \theta \hspace{0.1 cm} \vert {\mathcal A} \vert \hspace{0.1 cm} {\widetilde P}(\theta)
\hspace{0.1 cm} = \hspace{0.1 cm} cos \theta \hspace{0.1 cm} \sqrt{(\B_{Y}^{2})} \hspace{0.1 cm} {\widetilde P}(\theta)$.
\end{lemma}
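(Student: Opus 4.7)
The plan is to exploit the orthogonal decomposition $\Omega^{\even}(M,E)|_{Y} = \left(\Omega^{\even}(M,E)|_{Y}\right)^{\ast} \oplus {\mathcal L}_{0} \oplus {\mathcal L}_{1}$, write $\widetilde{P}(\theta) = P_{\ast}(\theta) + {\mathcal P}_{{\mathcal L}_{0}}$ with $P_{\ast}(\theta)$ acting on the $\ast$-summand, and reduce both assertions to algebra with self-adjoint involutions. Set $V_{\Pi_{>}} := 2\Pi_{>} - {\mathcal P}_{\ast}$ and $V_{{\mathcal P}_{-}} := 2{\mathcal P}_{-} - {\mathcal P}_{\ast}$; by Lemma \ref{Lemma:3.1}(2) their block forms in the $\pm i$-eigenspace decomposition of ${\mathcal \gamma}$ are off-diagonal with entries $U_{\Pi_{>}}, U_{\Pi_{>}}^{\ast}$ and $U_{{\mathcal P}_{-}}, U_{{\mathcal P}_{-}}^{\ast}$, so each is a self-adjoint unitary on the $\ast$-summand. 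Lemma \ref{Lemma:3.1}(1) then translates to $\gamma V_{\Pi_{>}} = -V_{\Pi_{>}} \gamma$ and $\gamma V_{{\mathcal P}_{-}} = -V_{{\mathcal P}_{-}} \gamma$, and Lemma \ref{Lemma:3.1}(3) to the anticommutation $V_{\Pi_{>}} V_{{\mathcal P}_{-}} + V_{{\mathcal P}_{-}} V_{\Pi_{>}} = 0$. Setting $V_{P} := \cos\theta \, V_{\Pi_{>}} + \sin\theta \, V_{{\mathcal P}_{-}}$, this anticommutation yields $V_{P}^{2} = {\mathcal P}_{\ast}$ and hence $P_{\ast}(\theta) = \tfrac{1}{2}({\mathcal P}_{\ast} + V_{P})$.

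For assertion (1), linearity gives $\gamma V_{P} = -V_{P} \gamma$, so $\gamma P_{\ast}(\theta) = \tfrac{1}{2}(\gamma - V_{P}\gamma) = \tfrac{1}{2}({\mathcal P}_{\ast} - V_{P})\gamma = ({\mathcal P}_{\ast} - P_{\ast}(\theta))\gamma$. On the kernel part, formula (\ref{E:1.9}) exhibits ${\mathcal \gamma} = -i\beta \Gamma^{Y} \cdot \Id$, and since $\Gamma^{Y}$ maps ${\mathcal K}$ bijectively onto $\Gamma^{Y}{\mathcal K}$ (with $\beta$ preserving the degree decomposition), ${\mathcal \gamma}$ exchanges ${\mathcal L}_{0}$ and ${\mathcal L}_{1}$, giving ${\mathcal \gamma} {\mathcal P}_{{\mathcal L}_{0}} = {\mathcal P}_{{\mathcal L}_{1}} {\mathcal \gamma}$. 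Summing the $\ast$ and kernel contributions and using $I = {\mathcal P}_{\ast} + {\mathcal P}_{{\mathcal L}_{0}} + {\mathcal P}_{{\mathcal L}_{1}}$ yields ${\mathcal \gamma} \widetilde{P}(\theta) = (I - \widetilde{P}(\theta)){\mathcal \gamma}$. The commutation $\widetilde{P}(\theta) \B_{Y}^{2} = \B_{Y}^{2} \widetilde{P}(\theta)$ is then immediate: $\Pi_{>}$ and ${\mathcal P}_{\ast}$ are spectral for $\B_{Y}^{2}$; ${\mathcal P}_{-}$ commutes with $\B_{Y}^{2}$ because $\Imm \nabla^{Y}$ and $\Imm \Gamma^{Y} \nabla^{Y} \Gamma^{Y}$ are $\B_{Y}^{2}$-invariant; and ${\mathcal P}_{{\mathcal L}_{0}}$ annihilates (hence commutes with) $\B_{Y}^{2}$ since ${\mathcal L}_{0} \subset \Ker \B_{Y}^{2}$.

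For assertion (2), both sides vanish on the kernel: equation (\ref{E:1.11}) gives $\mathcal{A}^{2} = \B_{Y}^{2}$, which combined with self-adjointness of $\mathcal{A}$ forces $\mathcal{A} \equiv 0$ on $\Ker \B_{Y}^{2} \supset {\mathcal L}_{0} \oplus {\mathcal L}_{1}$. On the $\ast$-summand $\mathcal{A}$ is invertible and $V_{\Pi_{>}} = \Pi_{>} - \Pi_{<}$ is the sign of $\mathcal{A}$, so $\mathcal{A} = \vert\mathcal{A}\vert V_{\Pi_{>}}$; the operator $\vert\mathcal{A}\vert = \sqrt{\B_{Y}^{2}}$ commutes with both $V_{\Pi_{>}}$ and $V_{{\mathcal P}_{-}}$ by the same invariance reasons as above, and hence with $P_{\ast}(\theta)$. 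The claim therefore reduces to the algebraic identity $P_{\ast}(\theta) V_{\Pi_{>}} P_{\ast}(\theta) = \cos\theta \, P_{\ast}(\theta)$. Expanding $({\mathcal P}_{\ast} + V_{P}) V_{\Pi_{>}} ({\mathcal P}_{\ast} + V_{P})$ and applying the anticommutation gives $V_{P} V_{\Pi_{>}} + V_{\Pi_{>}} V_{P} = 2\cos\theta \, {\mathcal P}_{\ast}$ and then $V_{P} V_{\Pi_{>}} V_{P} = -V_{\Pi_{>}} + 2\cos\theta \, V_{P}$, so the expansion collapses to $2\cos\theta({\mathcal P}_{\ast} + V_{P}) = 4\cos\theta \, P_{\ast}(\theta)$, as required. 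The remaining equality in the statement is just $\vert\mathcal{A}\vert = \sqrt{\B_{Y}^{2}}$.

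The main obstacle is the compressed algebraic computation $P_{\ast}(\theta) V_{\Pi_{>}} P_{\ast}(\theta) = \cos\theta \, P_{\ast}(\theta)$; every other step is organizational bookkeeping. This identity rests essentially on the anticommutation of $V_{\Pi_{>}}$ and $V_{{\mathcal P}_{-}}$ from Lemma \ref{Lemma:3.1}(3), which encodes the transversality of $\Imm \Pi_{>}$ and $\Imm {\mathcal P}_{-}$ with respect to the symplectic structure ${\mathcal \gamma}$ on the boundary.
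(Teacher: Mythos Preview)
Your proof is correct and proceeds along essentially the same lines as the paper's, which simply declares the computation ``straightforward'' and records the two projection identities $\Pi_{>}{\mathcal P}_{-}\Pi_{>} = \tfrac{1}{2}\Pi_{>}$ and ${\mathcal P}_{-}\Pi_{>} + \Pi_{>}{\mathcal P}_{-} = ({\mathcal P}_{-} + \Pi_{>} - \tfrac{1}{2}\Id){\mathcal P}_{\ast}$ as hints. Your passage to the self-adjoint involutions $V_{\Pi_{>}} = 2\Pi_{>} - {\mathcal P}_{\ast}$ and $V_{{\mathcal P}_{-}} = 2{\mathcal P}_{-} - {\mathcal P}_{\ast}$ is just a repackaging of those same identities (their anticommutation is equivalent to the paper's second formula), but it makes the algebra noticeably cleaner: once $V_P^2 = {\mathcal P}_{\ast}$ and $V_P V_{\Pi_{>}} + V_{\Pi_{>}} V_P = 2\cos\theta\,{\mathcal P}_{\ast}$ are in hand, the key identity $P_{\ast}(\theta) V_{\Pi_{>}} P_{\ast}(\theta) = \cos\theta\, P_{\ast}(\theta)$ drops out in two lines rather than requiring a longer expansion in terms of the four projection products.
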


\vspace{0.2 cm}
\begin{proof} : The proofs are straightforward. For the second statement we may need the following identities.
$$
\Pi_{>} {\mathcal P}_{-} \Pi_{>} \hspace{0.1 cm} = \hspace{0.1 cm} \frac{1}{2} \Pi_{>}, \qquad
 {\mathcal P}_{-} \Pi_{>} \hspace{0.1 cm} + \hspace{0.1 cm} \Pi_{>}  {\mathcal P}_{-} \hspace{0.1 cm} = \hspace{0.1 cm}
 \left(  {\mathcal P}_{-} \hspace{0.1 cm} + \hspace{0.1 cm} \Pi_{>} \hspace{0.1 cm} - \hspace{0.1 cm} \frac{1}{2} \Id \right) {\mathcal P}_{\ast}.
$$
\end{proof}

\begin{lemma} \label{Lemma:3.33}
Let $\B_{{\widetilde P}(\theta)}$ be the realization of $\B$ with respect to ${\widetilde P}(\theta)$, {\it i.e.}  \newline
$\Dom \left( \B_{{\widetilde P}(\theta)} \right) = \{ \phi \in H^{1} \left( \Omega^{\even}(M, E) \right) \mid  {\widetilde P}(\theta) (\phi|_{Y}) = 0 \}$.
Then $\B_{{\widetilde P}(\theta)}$ is essentially self-adjoint.
\end{lemma}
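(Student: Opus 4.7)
The plan is to split the statement into two parts: first, the symmetry of $\mathcal{B}_{\widetilde{P}(\theta)}$ on the given $H^{1}$ domain, and second, the reverse inclusion $\Dom(\mathcal{B}_{\widetilde{P}(\theta)}^{\ast}) \subset \Dom(\mathcal{B}_{\widetilde{P}(\theta)})$. The symmetry will come directly from Green's formula (Lemma~\ref{Lemma:1.1}(3)) together with the anticommutation ${\mathcal \gamma}\,\widetilde{P}(\theta) = (I-\widetilde{P}(\theta))\,{\mathcal \gamma}$ of Lemma~\ref{Lemma:3.2}(1); the inclusion will require elliptic regularity up to the boundary, for which one must check that $\widetilde{P}(\theta)$ is a well-posed boundary condition for $\mathcal{B}$ in the sense of Seeley, following the method of [6].

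For symmetry, take $\phi,\psi \in \Dom(\mathcal{B}_{\widetilde{P}(\theta)})$. Lemma~\ref{Lemma:1.1}(3) gives
$\langle \mathcal{B}\phi,\psi\rangle_{M} - \langle \phi, \mathcal{B}\psi\rangle_{M} = \langle \phi|_{Y}, ({\mathcal \gamma}\psi)|_{Y}\rangle_{Y}$.
Since $\widetilde{P}(\theta)(\psi|_{Y}) = 0$, we have $\psi|_{Y} \in \Imm(I-\widetilde{P}(\theta))$, and Lemma~\ref{Lemma:3.2}(1), rewritten as ${\mathcal \gamma}(I-\widetilde{P}(\theta)) = \widetilde{P}(\theta)\,{\mathcal \gamma}$, then forces $({\mathcal \gamma}\psi)|_{Y} \in \Imm\widetilde{P}(\theta)$. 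As $\phi|_{Y} \in \Imm(I-\widetilde{P}(\theta))$, the boundary pairing vanishes by orthogonality of the two images.

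For the reverse inclusion, let $\psi \in \Dom(\mathcal{B}_{\widetilde{P}(\theta)}^{\ast})$, so that $\mathcal{B}\psi \in L^{2}$ in the distributional sense. The crux is to upgrade $\psi$ to $H^{1}(M)$; once this is done, the trace $\psi|_{Y}$ is well-defined and Green's formula yields $\langle \phi|_{Y}, ({\mathcal \gamma}\psi)|_{Y}\rangle_{Y}=0$ for every $\phi \in \Dom(\mathcal{B}_{\widetilde{P}(\theta)})$. Since the traces $\phi|_{Y}$ exhaust a dense subspace of $\Imm(I-\widetilde{P}(\theta))$, one concludes $({\mathcal \gamma}\psi)|_{Y} \in \Imm\widetilde{P}(\theta)$, and reapplying Lemma~\ref{Lemma:3.2}(1) gives $\widetilde{P}(\theta)(\psi|_{Y})=0$, placing $\psi$ in $\Dom(\mathcal{B}_{\widetilde{P}(\theta)})$.

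The main obstacle is the elliptic regularity step, equivalently the well-posedness of the family $\widetilde{P}(\theta)$; this is nontrivial because $\Pi_{>}$ and $\mathcal{P}_{-}$ have distinct principal symbols, so $\widetilde{P}(\theta)$ is not merely a compact perturbation of $\Pi_{>,\mathcal{L}_{0}}$. To settle it I would use the explicit formula $\widetilde{P}(\theta) = \Pi_{>}\cos\theta + \mathcal{P}_{-}\sin\theta + \frac{1}{2}(1-\cos\theta-\sin\theta)\,\mathcal{P}_{\ast} + \mathcal{P}_{\mathcal{L}_{0}}$ together with the unitarity of $P(\theta)$ (a consequence of Lemma~\ref{Lemma:3.1}(3), since $U_{\Pi_{>}}^{\ast}U_{\mathcal{P}_{-}} + U_{\mathcal{P}_{-}}^{\ast}U_{\Pi_{>}}=0$), to check pointwise on $T^{\ast}Y\setminus 0$ that $\widetilde{P}(\theta)$ projects onto a smoothly varying Lagrangian subspace of the boundary symplectic structure $(\,\cdot\,,-i\beta\Gamma^{Y}\cdot)$. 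This is precisely Seeley's criterion for a well-posed boundary condition, and the general family-version carried out in [6] applies to produce the required $H^{1}$-regularity, completing the proof.
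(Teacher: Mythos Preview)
Your symmetry argument is identical to the paper's. For the reverse inclusion, the paper takes a shorter route: rather than arguing directly via elliptic regularity and Green's formula, it invokes Seeley's general result [24] (cf.\ [12]) that the adjoint of the realization $\B_{\widetilde P(\theta)}$ is the realization of $\B^{\ast}=\B$ with respect to the boundary condition $(I-\widetilde P(\theta)){\mathcal \gamma}^{\ast}$, i.e.\ $\Dom(\B_{\widetilde P(\theta)})^{\ast}=\{\phi\in H^{1}:(I-\widetilde P(\theta)){\mathcal \gamma}^{\ast}(\phi|_{Y})=0\}$; the anticommutation of Lemma~\ref{Lemma:3.2}(1) then shows this domain coincides with $\Dom(\B_{\widetilde P(\theta)})$. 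Your hands-on argument unpacks exactly what is hidden in that citation---in particular the well-posedness of $\widetilde P(\theta)$, which the paper leaves implicit but which is of course needed for Seeley's theorem to apply---so your approach is correct and in substance the same, just more self-contained at the cost of length.
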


\noindent
\begin{proof} :
It was shown in [24] (cf. [12]) that the adjoint $\left( \B_{{\widetilde P}(\theta)} \right)^{\ast}$ is the realization of $\B^{\ast} = \B$ with
respect to the boundary condition $\left( I - {\widetilde P}(\theta) \right) \gamma^{\ast}$, {\it i.e.}

$$
\Dom \left( \B_{{\widetilde P}(\theta)} \right)^{\ast} \hspace{0.1 cm} = \hspace{0.1 cm}
\{ \phi \in H^{1} \left( \Omega^{\even}(M, E) \right) \mid  \left( I - {\widetilde P}(\theta) \right) \gamma^{\ast} (\phi|_{Y}) = 0 \}
\hspace{0.1 cm} = \hspace{0.1 cm} \Dom \left( \B_{{\widetilde P}(\theta)} \right).
$$

\noindent
Hence, it's enough to show that $\B_{{\widetilde P}(\theta)}$ is a symmetric operator.
For $\phi$, $\psi \in \Dom \left( \B_{{\widetilde P}(\theta)} \right)$,

\begin{eqnarray*}
\langle \B \phi, \hspace{0.1 cm} \psi \rangle_{M} - \langle \phi, \hspace{0.1 cm} \B \psi \rangle_{M} & = &
 \hspace{0.1 cm} \langle \phi|_{Y}, \hspace{0.1 cm} {\mathcal \gamma} \left( \psi|_{Y} \right) \rangle_{Y}  \nonumber \\
& = &   \hspace{0.1 cm} \langle (I - {\widetilde P}(\theta)) \phi|_{Y},
\hspace{0.1 cm} {\mathcal \gamma} (I - {\widetilde P}(\theta)) \left( \psi|_{Y} \right) \rangle_{Y}
\hspace{0.1 cm} = \hspace{0.1 cm}  \hspace{0.1 cm} \langle (I - {\widetilde P}(\theta)) \phi|_{Y},
\hspace{0.1 cm} {\widetilde P}(\theta) {\mathcal \gamma} \left( \psi|_{Y} \right) \rangle_{Y}  \nonumber \\
& = &   \hspace{0.1 cm} \langle {\widetilde P}(\theta) (I - {\widetilde P}(\theta)) \phi|_{Y},
\hspace{0.1 cm} {\mathcal \gamma} \left( \psi|_{Y} \right) \rangle_{Y}  \hspace{0.1 cm} = \hspace{0.1 cm}  0,
\end{eqnarray*}

\noindent
which completes the proof of the lemma.
\end{proof}

\vspace{0.2 cm}

Setting
$$
U(\theta) \hspace{0.1 cm} = \hspace{0.1 cm}
\left( \begin{array}{clcr} P(\theta)^{\ast} \hspace{0.1 cm} U_{\Pi_{>}} & 0 \\ 0 & \Id \end{array} \right) {\mathcal P}_{\ast}
+ \left( \Id - {\mathcal P}_{\ast} \right) \hspace{0.1 cm} = \hspace{0.1 cm}
\left( \begin{array}{clcr} cos \theta + U_{{\mathcal P}_{-}}^{\ast} U_{\Pi_{>}} sin \theta & 0 \\ 0 & \Id \end{array} \right)  {\mathcal P}_{\ast}
+ \left( \Id - {\mathcal P}_{\ast} \right),
$$

\noindent
it is straightforward that

\begin{equation}
U(\theta) {\widetilde P}(0) U(\theta)^{\ast} = {\widetilde P}(\theta).
\end{equation}

\noindent
Moreover, setting

\begin{equation}  \label{E:3.111}
\hspace{0.1 cm} T(\theta) = -i \hspace{0.1 cm}  \theta \left( \begin{array}{clcr} U_{{\mathcal P}_{-}}^{\ast} U_{\Pi_{>}} & 0 \\ 0 & 0 \end{array} \right) {\mathcal P}_{\ast},
\end{equation}

\noindent
$T(\theta)$ is a self-adjoint operator and we have

\begin{equation}
exp\{ i T(\theta) \} = U(\theta).
\end{equation}

\noindent
$T(\theta)$ satisfies the following property.

\vspace{0.2 cm}

\begin{lemma} \label{Lemma:3.22}
$T(\theta)$ commutes with ${\mathcal \gamma}$ and $\B_{Y}^{2}$, {\it i.e.},
\begin{equation}   \label{E:3.7}
{\mathcal \gamma} \hspace{0.1 cm} T(\theta) \hspace{0.1 cm} = \hspace{0.1 cm} T(\theta) \hspace{0.1 cm} {\mathcal \gamma}, \qquad
\B_{Y}^{2} \hspace{0.1 cm} T(\theta) \hspace{0.1 cm} = \hspace{0.1 cm} T(\theta) \hspace{0.1 cm} \B_{Y}^{2}.
\end{equation}
\end{lemma}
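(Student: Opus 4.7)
The plan is to reduce both commutation statements to properties of the operator $U_{\mathcal{P}_-}^{\ast} U_{\Pi_>}$ acting on $\bigl(\Omega^{\even}(M,E)|_Y\bigr)^{\ast}_{+i}$, since $T(\theta)$ is, up to the scalar $-i\theta$, just this operator placed in the $(+i,+i)$-block of the matrix decomposition and then composed with $\mathcal{P}_{\ast}$. The commutations with $\mathcal{\gamma}$ and $\B_Y^2$ will then follow once these three facts are checked: (a) $\mathcal{P}_{\ast}$ commutes with $\mathcal{\gamma}$ and with $\B_Y^2$; (b) $U_{\mathcal{P}_-}^{\ast} U_{\Pi_>}$ commutes with $\mathcal{\gamma}$; and (c) $U_{\mathcal{P}_-}^{\ast} U_{\Pi_>}$ commutes with $\B_Y^2$.

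For (a), observe that $\mathcal{P}_{\ast}$ is the orthogonal projection onto $\bigl(\Omega^{\even}(M,E)|_Y\bigr)^{\ast}$, i.e., the orthogonal complement of $\Ker \B_Y^2$ in $\Omega^{\even}(M,E)|_Y$. Both $\mathcal{\gamma}$ and $\B_Y^2$ preserve $\Ker \B_Y^2$ (the former because $\Ker \B_Y^2 = {\mathcal H}^{\bullet}$ is a symplectic subspace stable under $-i\beta\Gamma^Y$, the latter trivially), and hence also its orthogonal complement, so $\mathcal{P}_{\ast}$ commutes with each.

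For (b), I start from the relations $\mathcal{\gamma}\,U_{\Pi_>} = -U_{\Pi_>}\mathcal{\gamma}$ and $\mathcal{\gamma}\,U_{\mathcal{P}_-} = -U_{\mathcal{P}_-}\mathcal{\gamma}$ of Lemma~\ref{Lemma:3.1}(1). Taking adjoints of the second and using $\mathcal{\gamma}^{\ast} = -\mathcal{\gamma}$ (since $\mathcal{\gamma}$ is unitary with $\mathcal{\gamma}^{2} = -\Id$) yields $\mathcal{\gamma}\,U_{\mathcal{P}_-}^{\ast} = -U_{\mathcal{P}_-}^{\ast}\mathcal{\gamma}$. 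Composing the two sign changes gives
\[
\mathcal{\gamma}\, U_{\mathcal{P}_-}^{\ast} U_{\Pi_>} \;=\; -U_{\mathcal{P}_-}^{\ast}\mathcal{\gamma}\,U_{\Pi_>} \;=\; U_{\mathcal{P}_-}^{\ast} U_{\Pi_>}\,\mathcal{\gamma},
\]
which is exactly the desired commutation on the $+i$-block. Because the $(-i,+i)$, $(+i,-i)$ and $(-i,-i)$ entries of the matrix defining $T(\theta)$ are zero (and $T(\theta)$ vanishes on $\Imm(\Id-\mathcal{P}_{\ast})$), item (a) combines with this to give $\mathcal{\gamma}\,T(\theta)=T(\theta)\,\mathcal{\gamma}$ globally.

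For (c), I read off from the explicit formulas (\ref{E:3.52}) and (\ref{E:3.54}) that $U_{\Pi_>}$ is a composition of $(\B_Y^2)^{-1/2}$, the operator $D := \nabla^{Y} + \Gamma^{Y}\nabla^{Y}\Gamma^{Y}$, and the constant swap matrix, while $U_{\mathcal{P}_-}$ is a composition of $(\B_Y^2)^{-1}$ with $(\B_Y^2)^{-} - (\B_Y^2)^{+}$ and $\Id$. Each factor is either a function of $\B_Y^2$ (hence commutes with it), a constant matrix, or equals $D$. For the last, $D^{2} = \B_Y^2$ on $\bigl(\Omega^{\bullet}(Y,E|_Y)\bigr)^{\ast}$, so $D$ commutes with $\B_Y^2$ as well. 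Therefore $U_{\Pi_>}$ and $U_{\mathcal{P}_-}$ both commute with $\B_Y^2$, so does $U_{\mathcal{P}_-}^{\ast} U_{\Pi_>}$, and together with (a) this yields $\B_Y^2\, T(\theta) = T(\theta)\,\B_Y^2$. No real obstacle arises: the only mild care is in tracking that the block matrices are written with respect to the decomposition $\bigl(\Omega^{\even}(M,E)|_Y\bigr)^{\ast}_{+i} \oplus \bigl(\Omega^{\even}(M,E)|_Y\bigr)^{\ast}_{-i}$ and that $\mathcal{\gamma}$ acts as $\pm i$ on these summands, so the matrix computation of (b) is indeed equivalent to the operator identity claimed.
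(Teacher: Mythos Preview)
Your proof is correct. The paper itself states Lemma~\ref{Lemma:3.22} without proof, treating both commutations as immediate from the definition (\ref{E:3.111}) of $T(\theta)$ and the explicit formulas (\ref{E:3.52}), (\ref{E:3.54}); your argument simply fills in those details along the expected lines. One small simplification: for part~(b) you do not actually need the anticommutation relations from Lemma~\ref{Lemma:3.1}(1), since the block matrix in (\ref{E:3.111}) is already diagonal with respect to the $\pm i$-eigenspace decomposition of ${\mathcal \gamma}$, and ${\mathcal \gamma}$ acts as the scalar $\pm i$ on each summand, so commutation is automatic---your own final remark essentially says this.
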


\vspace{0.2 cm}

\noindent
{\it Remark} : Contrary to the case of [6], $T(\theta)$ does not anticommute with ${\mathcal A}$.

\vspace{0.2 cm}

Let $\phi : [0, 1] \rightarrow [0, 1]$ be a decreasing smooth function such that $\phi = 1$ on a small neighborhood of $0$ and
$\phi = 0$ on a small neighborhood of $1$. We use this cut-off function to extend $T(\theta)$ defined on $\Omega^{\even}(M, E)|_{Y}$
to an operator defined on $\Omega^{\even}(M, E)$.
We define
$\Psi_{\theta} : \Omega^{\even}(M, E) \rightarrow \Omega^{\even}(M, E)$ by

\begin{equation} \label{E:3.8}
\Psi_{\theta} (\omega) (x) = e^{i \phi(x) T(\theta)} \omega (x),
\end{equation}

\noindent
where the support of $\phi(x) T(\theta)$ is contained in $N$, the collar neighborhood of $Y$.

\vspace{0.2 cm}

\begin{lemma} \label{Lemma:3.3}
$\Psi_{\theta}$ is a unitary operator mapping from $\Dom \left(\B_{{\widetilde P}(0)} \right)$ onto $\Dom \left(\B_{{\widetilde P}(\theta)} \right)$.
\end{lemma}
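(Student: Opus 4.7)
The plan is to exploit the fact that $T(\theta)$ is a bounded self-adjoint pseudodifferential operator of order zero (since $U_{\Pi_{>}}$ carries the factor $(\B_Y^2)^{-1/2}$, so $U_{{\mathcal P}_{-}}^{\ast}U_{\Pi_{>}}$ is order $0$; self-adjointness of $T(\theta)$ follows from the anticommutation identity in Lemma \ref{Lemma:3.1}(3)). Consequently, for each fixed $u$ in the collar variable, $e^{i\phi(u)T(\theta)}$ is well-defined via functional calculus as a unitary operator on $L^{2}(Y,E|_{Y})\oplus L^{2}(Y,E|_{Y})$, and as a $\Psi$DO of order zero it extends boundedly to the relevant Sobolev spaces.

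First I would unwind the definition: on the collar $N\cong[0,1)\times Y$ a form $\omega$ is a $Y$-family $\omega(u,\cdot)$, and $\Psi_{\theta}$ acts by $(\Psi_{\theta}\omega)(u,\cdot)=e^{i\phi(u)T(\theta)}\omega(u,\cdot)$, extended as the identity outside $\supp\phi$. Since the exponential is pointwise unitary in $u$, Fubini yields
\begin{equation*}
\|\Psi_{\theta}\omega\|_{M}^{2}=\int_{0}^{1}\|e^{i\phi(u)T(\theta)}\omega(u,\cdot)\|_{Y}^{2}\,du+\int_{M\setminus N}\|\omega\|^{2}=\|\omega\|_{M}^{2},
\end{equation*}
so $\Psi_{\theta}$ is an $L^{2}$-isometry. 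The smoothness of $u\mapsto\phi(u)T(\theta)$ together with the order-zero $\Psi$DO character of $e^{i\phi(u)T(\theta)}$ shows that $\Psi_{\theta}$ preserves $C^{\infty}$ and $H^{1}$, and its inverse is manifestly $\Psi_{-\theta}$ (since $T(-\theta)=-T(\theta)$); hence $\Psi_{\theta}$ is unitary on each of these spaces.

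Next I would identify the boundary trace. Because $\phi(0)=1$, the trace at $Y$ is $(\Psi_{\theta}\omega)|_{Y}=e^{iT(\theta)}(\omega|_{Y})=U(\theta)(\omega|_{Y})$. Combining this with the conjugation relation $U(\theta)\,\widetilde P(0)\,U(\theta)^{\ast}=\widetilde P(\theta)$ established just before the lemma, for any $\omega\in\Dom(\B_{\widetilde P(0)})$ I obtain
\begin{equation*}
\widetilde P(\theta)\bigl((\Psi_{\theta}\omega)|_{Y}\bigr)=\widetilde P(\theta)U(\theta)(\omega|_{Y})=U(\theta)\,\widetilde P(0)(\omega|_{Y})=0,
\end{equation*}
so $\Psi_{\theta}(\Dom\B_{\widetilde P(0)})\subset\Dom\B_{\widetilde P(\theta)}$. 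The reverse inclusion follows by the same calculation applied to $\Psi_{-\theta}=\Psi_{\theta}^{-1}$, using $\widetilde P(0)=U(\theta)^{\ast}\widetilde P(\theta)U(\theta)$, which proves $\Psi_{\theta}$ is a bijection between the two domains.

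The principal obstacle is the technical point of asserting that the pointwise-in-$u$ functional calculus actually produces an operator that preserves $H^{1}(M,E)$: one must verify that $e^{i\phi(u)T(\theta)}$ depends smoothly on $u$ as a family of zero-order $\Psi$DOs on $Y$, so that normal differentiation of $\Psi_{\theta}\omega$ produces only $L^{2}$-bounded commutator terms of the form $i\phi'(u)T(\theta)\,e^{i\phi(u)T(\theta)}\omega$ plus $e^{i\phi(u)T(\theta)}\nabla_{\partial_{u}}\omega$. This is a routine instance of the spectral theorem combined with $\Psi$DO calculus, but it is the only step that is not purely algebraic.
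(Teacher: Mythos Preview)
Your proof is correct and follows essentially the same approach as the paper: the key step in both is the boundary-trace computation using $\phi(0)=1$ together with the conjugation identity $U(\theta)\widetilde P(0)U(\theta)^{\ast}=\widetilde P(\theta)$ to show that the boundary condition is carried from $\widetilde P(0)$ to $\widetilde P(\theta)$. You supply more detail than the paper does---the paper simply asserts ``Clearly $\Psi_{\theta}$ is a unitary operator'' and checks only the forward inclusion of domains---whereas you justify unitarity via pointwise functional calculus, address $H^{1}$-preservation, and handle the reverse inclusion with $\Psi_{-\theta}=\Psi_{\theta}^{-1}$.
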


\begin{proof} : Clearly $\Psi_{\theta}$ is a unitary operator.
Let ${\widetilde P}(0) \omega(0) = 0$. Then
\begin{eqnarray*}
{\widetilde P}(\theta) (\Psi_{\theta} \omega) (0) & = & U(\theta) {\widetilde P}(0) U(\theta)^{\ast} \left( e^{i \phi(x) T(\theta)} \omega \right)|_{x=0}  \\
& = & U(\theta) {\widetilde P}(0) e^{- i T(\theta)} \left( e^{i \phi(x) T(\theta)} \omega \right)|_{x=0} \hspace{0.1 cm} = \hspace{0.1 cm}
U(\theta) \left( {\widetilde P}(0) \omega(0) \right) \hspace{0.1 cm} = \hspace{0.1 cm} 0,
\end{eqnarray*}
which completes the proof of the lemma.
\end{proof}

\vspace{0.2 cm}

\noindent
We now consider the following diagram.

\vspace{0.3 cm}

$$\begin{CD}
  \Dom \left( \B_{{\widetilde P}(0)} \right)     & @> \B_{{\widetilde P}(0)} >> &  \Omega^{\even}(M, E)       \\
  @V \Psi_{\theta} VV      &        & @V{\Psi_{\theta}}VV    \\
  \Dom \left( \B_{{\widetilde P}(\theta)} \right)     & @> \B_{{\widetilde P}(\theta)}  >> & \Omega^{\even}(M, E)
\end{CD}
$$

\vspace{0.3 cm}

\noindent
Setting $\B(\theta) := \Psi_{\theta}^{\ast} \B_{{\widetilde P}(\theta)} \Psi_{\theta}$,
$$
\B(\theta) : \Dom \left(\B_{{\widetilde P}(0)} \right) \rightarrow \Omega^{\even}(M, E)
$$
is an elliptic $\Psi$DO of order $1$
with a fixed domain $\Dom \left(\B_{{\widetilde P}(0)} \right)$ and have the same spectrum as $\B_{{\widetilde P}(\theta)}$.

\vspace{0.2 cm}

We next discuss one parameter family of eta functions $\eta_{\B(\theta)}(s)$ defined by

\vspace{0.2 cm}

\begin{equation} \label{E:3.9}
\eta_{\B(\theta)}(s) = \frac{1}{\Gamma(\frac{s+1}{2})} \int_{0}^{\infty} t^{\frac{s-1}{2}} \Tr \left( \B(\theta) e^{-t \B(\theta)^{2}} \right) dt.
\end{equation}

\vspace{0.2 cm}

\noindent
If $\eta_{\B(\theta)}(s)$ has a regular value at $s=0$, we define the eta invariant $\eta (\B(\theta))$ by

\begin{equation}  \label{E:3.99}
\eta (\B(\theta)) = \frac{1}{2} \left( \eta_{\B(\theta)}(0) + \Dim \Ker \B(\theta) \right).
\end{equation}

\vspace{0.2 cm}

\noindent
For $0 \leq \theta_{0} \leq \frac{\pi}{2}$, there exist $c(\theta_{0}) > 0$ and $\delta > 0$ such that
$c(\theta_{0}) \notin \Spec \left( \B_{\theta} \right)$ for $\theta_{0} - \delta < \theta < \theta_{0} + \delta$.
We denote by $Q(\theta)$ the orthogonal projection onto the space spanned by eigensections of $\B(\theta)$ whose eigenvalues are
less than $c(\theta)$ for $\theta_{0} - \delta < \theta < \theta_{0} + \delta$.
We define

\begin{eqnarray*}
\eta_{\B(\theta)} \left( s \hspace{0.1 cm} ; \hspace{0.1 cm} c(\theta) \right) \hspace{0.1 cm} =
\sum_{| \lambda | > c(\theta)} sign (\lambda) \vert \lambda \vert^{-s}
\hspace{0.1 cm} = \hspace{0.1 cm} \frac{1}{\Gamma(\frac{s+1}{2})} \int^{\infty}_{0} t^{\frac{s-1}{2}}
\Tr \left\{ \left( I - Q(\theta) \right) \B(\theta) e^{- t \B(\theta)^{2}} \right\} dt.
\end{eqnarray*}

\noindent
Then $\eta_{\B(\theta)} (s) - \eta_{\B(\theta)} \left( s \hspace{0.1 cm} ; c(\theta) \right)$ is an entire function and

\vspace{0.2 cm}

\begin{equation}   \label{E:3.155}
\left\{ \frac{1}{2} \left( \eta_{\B(\theta)} (s) + \Dim \Ker \B(\theta) \right) - \frac{1}{2} \eta_{\B(\theta)} \left( s \hspace{0.1 cm} ; c(\theta) \right) \right\}_{s=0}
\end{equation}

\vspace{0.2 cm}

\noindent
does not depend on $\theta$ for $\theta_{0} - \delta < \theta < \theta_{0} + \delta$ up to $\Mod {\Bbb Z}$.
Simple computation shows that

\begin{eqnarray}  \label{E:3.144}
& & \frac{\partial}{\partial \theta} \eta_{\B(\theta)} \left( s \hspace{0.1 cm} ; c(\theta) \right)  \\
& = & \frac{1}{\Gamma(\frac{s+1}{2})} \int^{\infty}_{0} t^{\frac{s-1}{2}} \Tr \left( - {\dot Q}(\theta) \B(\theta) e^{-t \B(\theta)^{2}}  +
 \left( I - Q(\theta) \right) \frac{\partial}{\partial \theta} \left( \B(\theta) e^{- t \B(\theta)^{2}} \right) \right) dt  \nonumber \\
& = & \frac{1}{\Gamma(\frac{s+1}{2})} \int^{\infty}_{0} t^{\frac{s-1}{2}} \Tr \left( - {\dot Q}(\theta) \B(\theta) e^{-t \B(\theta)^{2}} \right) dt -
\frac{s}{\Gamma(\frac{s+1}{2})} \int^{\infty}_{0} t^{\frac{s-1}{2}}
\Tr \left\{ \left( I - Q(\theta) \right) \left( {\dot \B}(\theta) e^{- t \B(\theta)^{2}} \right) \right\} dt \nonumber,
\end{eqnarray}

\vspace{0.2 cm}

\noindent
where ${\dot Q(\theta)}$ and ${\dot \B(\theta)}$ are derivatives of $Q(\theta)$ and $\B(\theta)$ with respect to $\theta$.
Furthermore, we have (cf. [14])
$$
\Tr \left( - {\dot Q}(\theta) \B(\theta) e^{-t \B(\theta)^{2}} \right) = 0, \qquad
\left\{ \frac{s}{\Gamma(\frac{s+1}{2})} \int^{\infty}_{0} t^{\frac{s-1}{2}}
\Tr \left( Q(\theta)  {\dot \B}(\theta) e^{- t \B(\theta)^{2}} \right) dt \right\}_{s=0} = 0.
$$

\noindent
These equalities imply that

\vspace{0.2 cm}

\begin{eqnarray}  \label{E:3.156}
\frac{\partial}{\partial \theta} \eta_{\B(\theta)} \left( s \hspace{0.1 cm} ; c(\theta) \right) & = &
- \hspace{0.1 cm} \frac{s}{\Gamma(\frac{s+1}{2})} \int^{\infty}_{0} t^{\frac{s-1}{2}}
\Tr \left( {\dot \B}(\theta) e^{- t \B(\theta)^{2}} \right) dt + F(s) ,
\end{eqnarray}

\vspace{0.2 cm}
\noindent
where $F(s)$ is an analytic function at least for $\re s > -1$ with $F(0) = 0$.

\vspace{0.3 cm}

Recall that
\begin{equation}  \label{E:3.11}
\B(\theta) \hspace{0.1 cm} = \hspace{0.1 cm} \Psi_{\theta}^{\ast} \B_{{\widetilde P}(\theta)} \Psi_{\theta}
\hspace{0.1 cm} = \hspace{0.1 cm} e^{- i \phi(x) T(\theta)} {\mathcal \gamma} (\partial_{x} + {\mathcal A}) e^{i \phi(x) T(\theta)}.
\end{equation}
Using the fact that $T(\theta) T^{\prime}(\theta) = T^{\prime}(\theta) T(\theta)$ and Lemma \ref{Lemma:3.22}, we have

\begin{equation}  \label{E:3.12}
{\dot \B}(\theta) \hspace{0.1 cm} = \hspace{0.1 cm}
e^{- i \phi(x) T(\theta)}  \left( i \phi^{\prime}(x) {\mathcal \gamma} T^{\prime}(\theta) -
i \phi(x) {\mathcal \gamma} T^{\prime}(\theta) {\mathcal A} + i \phi(x) {\mathcal \gamma} {\mathcal A} T^{\prime}(\theta)
\right) e^{i \phi(x) T(\theta)},
\end{equation}

\noindent
which leads to

\begin{equation} \label{E:3.13}
\Tr \left( {\dot \B(\theta)} e^{-t \B(\theta)^{2}} \right) \hspace{0.1 cm} = \hspace{0.1 cm}
\Tr \left\{ \left( i \phi^{\prime}(x) {\mathcal \gamma} T^{\prime}(\theta) -
i \phi(x) {\mathcal \gamma} [ T^{\prime}(\theta), \hspace{0.1 cm} {\mathcal A} ] \hspace{0.1 cm} \right) e^{-t \B^{2}_{{\widetilde P}(\theta)}} \right\}.
\end{equation}

\vspace{0.2 cm}

\noindent
Since the support of $\phi$ is in $[0, 1]$, the support of ${\dot \B(\theta)}$ is in $[0, 1] \times Y$.
Let $\B^{\cyl}$ be the odd signature operator defined by (\ref{E:1.8}) on $[0, \infty) \times Y$.
The heat kernel of $\left( \B^{\cyl}_{{\widetilde P}(\theta)} \right)^{2}$ was computed in [6] as follows.

\vspace{0.2 cm}

\begin{eqnarray} \label{E:3.14}
e^{-t \left( \B^{\cyl}_{{\widetilde P}(\theta)} \right)^{2}} (x, y)  & = & (4 \pi t)^{- \frac{1}{2}} \left( e^{- \frac{(x-y)^{2}}{4t}} +
(I - 2 {\widetilde P} (\theta)) e^{- \frac{(x+y)^{2}}{4t}} \right) e^{-t {\mathcal A}^{2}}  \nonumber \\
& & + (\pi t)^{- \frac{1}{2}} \left( I - {\widetilde P} (\theta) \right) \int_{0}^{\infty} e^{- \frac{(x+y+z)^{2}}{4t}} {\widetilde {\mathcal A}}(\theta)
e^{{\widetilde {\mathcal A}}(\theta)z - t {\mathcal A}^{2}} dz,
\end{eqnarray}

\vspace{0.2 cm}

\noindent
where
${\widetilde {\mathcal A}}(\theta) := (I - {\widetilde P}(\theta) ) {\mathcal A} (I - {\widetilde P } (\theta) )$.
The standard theory for heat kernel ([1], [3]) implies
that the asymptotic expansions of $\Tr \left( {\dot \B(\theta)} e^{-t \B(\theta)^{2}} \right)$ is equal
to that of $\Tr \left( {\dot \B^{\cyl}(\theta)} e^{-t \left( \B^{\cyl} (\theta) \right)^{2}} \right)$ up to
$\left( e^{- \frac{c}{t}} \right)$ for some $c > 0$.
With a little abuse of notation we write $\B^{\cyl}$ by $\B$ again.
Equation (\ref{E:3.13}) leads to the following equality.

\begin{eqnarray} \label{E:3.15}
& & \Tr \left( i \phi^{\prime}(x) {\mathcal \gamma} T^{\prime}(\theta) e^{-t \B^{2}_{{\widetilde P}(\theta)}} \right)     \\
& = & \frac{i}{\sqrt{4 \pi t}}  \int_{0}^{\infty} \phi^{\prime}(x) dx \hspace{0.1 cm}
\Tr \left( {\mathcal \gamma} T^{\prime}(\theta) e^{-t {\mathcal A}^{2}} \right)
\nonumber \\
& + & \frac{i}{\sqrt{4 \pi t}}  \int_{0}^{\infty} \phi^{\prime}(x) e^{- \frac{x^{2}}{t}} dx \hspace{0.1 cm}
\Tr \left( {\mathcal \gamma} T^{\prime}(\theta) (I - 2 {\widetilde P}(\theta)) e^{-t {\mathcal A}^{2}} \right) \nonumber \\
& + & \frac{i}{\sqrt{\pi t}} \int_{0}^{\infty} \int_{0}^{\infty} \phi^{\prime}(x) e^{- \frac{(2x + z)^{2}}{4t}}
\Tr \left\{ {\mathcal \gamma} T^{\prime}(\theta) (I - {\widetilde P}(\theta))  {\widetilde {\mathcal A}}(\theta)
e^{{\widetilde {\mathcal A}}(\theta)z - t {\mathcal A}^{2}} \right\} dz dx . \nonumber
\end{eqnarray}

\vspace{0.2 cm}

\noindent
Lemma \ref{Lemma:3.2} and Lemma \ref{Lemma:3.22} imply that
$\Tr \left( {\mathcal \gamma} T^{\prime}(\theta) (I - 2 {\widetilde P}(\theta)) e^{-t {\mathcal A}^{2}} \right) = 0$.
Since $\phi (x) = 1$ near $x=0$, the third integral decays exponentially as $t \rightarrow 0^{+}$.
Hence,

\begin{equation} \label{E:3.16}
\Tr \left( i \phi^{\prime}(x) {\mathcal \gamma} T^{\prime}(\theta) e^{-t \B^{2}_{{\widetilde P}(\theta)}} \right) \hspace{0.1 cm} = \hspace{0.1 cm}
\frac{-i}{\sqrt{4 \pi t}} \Tr \left( {\mathcal \gamma} T^{\prime}(\theta) e^{-t {\mathcal A}^{2}} \right) + O(e^{- \frac{c}{t}}).
\end{equation}

\noindent
We refer to p.456 in [6] for the proof of the following equality.

\begin{equation}  \label{E:3.31}
{\widetilde {\mathcal A}}(\theta) e^{{\widetilde {\mathcal A}}(\theta)z - t {\mathcal A}^{2}} \hspace{0.1 cm} = \hspace{0.1 cm}
(- cos\theta) | {\mathcal A}| (I - {\widetilde P}(\theta)) e^{- cos\theta |{\mathcal A}| z - t {\mathcal A}^{2} }.
\end{equation}

\vspace{0.2 cm}

\noindent
Then, we have

\begin{eqnarray} \label{E:3.17}
& & \Tr \left( - i \phi (x) {\mathcal \gamma} [T^{\prime}(\theta), \hspace{0.1 cm} {\mathcal A}] e^{-t \B^{2}_{{\widetilde P}(\theta)}} \right)   \\
& = & \frac{- i}{\sqrt{4 \pi t}}  \int_{0}^{\infty} \phi(x) dx \hspace{0.1 cm} \Tr \left( {\mathcal \gamma}
[T^{\prime}(\theta), \hspace{0.1 cm} {\mathcal A}] e^{-t {\mathcal A}^{2}} \right)      \nonumber  \\
& + & \frac{- i}{\sqrt{4 \pi t}}  \int_{0}^{\infty} \phi(x) e^{- \frac{x^{2}}{t}} dx
\hspace{0.1 cm} \Tr \left( {\mathcal \gamma} [T^{\prime}(\theta), \hspace{0.1 cm} {\mathcal A}]
(I - 2 {\widetilde P}(\theta)) e^{-t {\mathcal A}^{2}} \right) \nonumber \\
& + & \frac{- i}{\sqrt{\pi t}} \int_{0}^{\infty} \int_{0}^{\infty} \phi(x) e^{- \frac{(2x + z)^{2}}{4t}}
\hspace{0.1 cm} \Tr \left\{ {\mathcal \gamma} [T^{\prime}(\theta), \hspace{0.1 cm} {\mathcal A}]
(I - {\widetilde P}(\theta)) (- cos\theta) |{\mathcal A}| (I - {\widetilde P}(\theta))
e^{- cos\theta |{\mathcal A}| z - t {\mathcal A}^{2} } \right\} dz dx  \nonumber  \\
& = : & (I) + (II) + (III).   \nonumber
\end{eqnarray}

\vspace{0.2 cm}

\noindent
Lemma \ref{Lemma:3.22} shows that
$\Tr \left( {\mathcal \gamma} [T^{\prime}(\theta), \hspace{0.1 cm} {\mathcal A}] e^{-t {\mathcal A}^{2}} \right) = 0$, which yields

\begin{equation} \label{E:3.18}
(I) \hspace{0.1 cm} = \hspace{0.1 cm} 0, \qquad
(II) \hspace{0.1 cm} = \hspace{0.1 cm}
\frac{i}{2} \Tr \left(  {\mathcal \gamma} [T^{\prime}(\theta), \hspace{0.1 cm} {\mathcal A}] {\widetilde P}(\theta) e^{-t {\mathcal A}^{2}}  \right)
+  O(e^{- \frac{c}{t}}).
\end{equation}

\vspace{0.2 cm}

\noindent
Change of variables, Lemma \ref{Lemma:3.2} and Lemma \ref{Lemma:3.22} show that

\begin{equation} \label{E:3.19}
(III)  \hspace{0.1 cm} = \hspace{0.1 cm}
\frac{ - 2 i cos\theta}{\sqrt{\pi}} \sqrt{t} \int_{0}^{\infty} \int_{0}^{\infty} \phi(\sqrt{t}x) e^{- (x + z)^{2}} \Tr \left( {\mathcal \gamma}
[T^{\prime}(\theta), \hspace{0.1 cm} {\mathcal A}] {\widetilde P}(\theta) |{\mathcal A}|
e^{-2 \sqrt{t} cos\theta |{\mathcal A}| z - t {\mathcal A}^{2}} \right) dx dz .
\end{equation}

\noindent
Since $|{\mathcal A}|$ commutes with ${\mathcal A}$, $T^{\prime}(\theta)$ and ${\widetilde P}(\theta)$, we denote
\begin{equation} \label{E:3.20}
d(\lambda) := \Tr_{\Ker ( |{\mathcal A}| - \lambda )} \left( {\mathcal \gamma} [T^{\prime}(\theta), \hspace{0.1 cm} {\mathcal A}] {\widetilde P} (\theta) \right).
\end{equation}

\begin{eqnarray} \label{E:3.21}
(III) & = & -  i cos\theta \sum_{0 \neq \lambda \in \Spec(|{\mathcal A}|)} d(\lambda) \int_{0}^{\infty} \int_{0}^{\infty} \phi(\sqrt{t}x)
\frac{2}{\sqrt{\pi}} \sqrt{t} \lambda e^{-(x+z)^{2}} e^{-2 cos\theta \sqrt{t} \lambda z - t \lambda^{2}} dx dz  \nonumber  \\
& = & -  i cos\theta \sum_{0 \neq \lambda \in \Spec(|{\mathcal A}|)} d(\lambda) \int_{0}^{\infty}
\frac{2}{\sqrt{\pi}} \int_{0}^{\infty} e^{-(x+z)^{2}} dx \sqrt{t} \lambda e^{-2 cos\theta \sqrt{t} \lambda z - t \lambda^{2}}dz
+ O(e^{- \frac{c}{t}}) \nonumber  \\
& = & -  i cos\theta \sum_{0 \neq \lambda \in \Spec(|{\mathcal A}|)} d(\lambda) \int_{0}^{\infty} erfc(z)
 \sqrt{t} \lambda e^{-2 cos\theta \sqrt{t} \lambda z - t \lambda^{2}}dz + O(e^{- \frac{c}{t}}) ,
\end{eqnarray}

\noindent
where $erfc(x) := \frac{2}{\sqrt{\pi}} \int_{x}^{\infty} e^{-y^{2}} dy$. To compute $(III)$ more precisely we introduce the following concepts.

\vspace{0.3 cm}

Let $A$, $B$ be classical pseudodifferential operators and $A$ be an elliptic operator of positive order on a compact manifold.
Then $\Tr \left( Be^{-t A^{2}} \right)$ has an asymptotic expansion of the following type for $t \rightarrow 0^{+}$.

\begin{equation} \label{E:3.26}
\Tr \left( Be^{-t A^{2}} \right) \sim \sum_{\re \alpha \rightarrow \infty} a_{\alpha, k}(A, B) t^{\alpha} (\log t)^{k}.
\end{equation}

\noindent
When $B$ commutes with $A^{2}$ and vanishes on $\Ker A^{2}$,
we define the eta function $\eta(A, B \hspace{0.1 cm} ; \hspace{0.1 cm} s)$ by

\begin{eqnarray}  \label{E:3.27}
\eta(A, B \hspace{0.1 cm} ; \hspace{0.1 cm} s) & := &
\frac{1}{\Gamma(\frac{s+1}{2})} \int_{0}^{\infty} t^{\frac{s-1}{2}} \Tr \left( Be^{-t A^{2}} \right) dt  \nonumber \\
& = & \sum_{|\lambda| \in \Spec(|A|) - \{ 0 \}} \left( \Tr_{\Ker(|A| - |\lambda|)} B \right) | \lambda |^{-s-1}.
\end{eqnarray}

\noindent
Then the noncommutative residue $\rres$ is defined as follows ([30], [31], [15]).

\begin{equation} \label{E:3.28}
\rres (B) := -2 \ord (A) \hspace{0.1 cm} a_{0,1}(A, B) \hspace{0.1 cm} = \hspace{0.1 cm} \ord(A) \hspace{0.1 cm} \Res_{s=-1}
\eta(A, B \hspace{0.1 cm} ; \hspace{0.1 cm} s).
\end{equation}

\noindent
The following is well known ([30], [6]).

\begin{lemma} \label{Lemma:3.7}
If $B$ is a classical pseudodifferential operator on a compact manifold with $B^{2} = B$, then $\rres (B) = 0$.
\end{lemma}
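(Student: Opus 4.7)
The plan is to invoke the classical trace property of the noncommutative residue due to Wodzicki (cf.\ [30], [15]): $\rres$ is a trace on the algebra of classical pseudodifferential operators, meaning $\rres(PQ-QP)=0$ for any classical $\Psi$DOs $P, Q$, and $\rres$ vanishes identically on the ideal of smoothing operators. Granting this, the task reduces to exhibiting $B$, modulo smoothing operators, as a commutator of two classical $\Psi$DOs.

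First I would observe that $B^{2}=B$ forces the principal symbol $b_{0} := \sigma_{0}(B)$ to satisfy $b_{0}^{2} = b_{0}$ pointwise on $T^{\ast}M\setminus 0$, so $b_{0}$ is a projection in the symbol algebra. Expanding the total symbol as $b \sim \sum_{j\geq 0} b_{-j}$ and applying the star-product identity $b\star b = b$ yields a recursive system of equations on the homogeneous components $b_{-j}$. Since the Wodzicki residue is the local invariant $\rres(B) = (2\pi)^{-n} \int_{S^{\ast}M} \operatorname{tr}(b_{-n}(x,\xi))\, d\xi\, dx$ with $n = \dim M$, it suffices to show that this integral vanishes.

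The core step is to use the symbolic recursions together with the trace property of $\rres$ to write $\operatorname{tr}(b_{-n})$ as a total divergence on the cosphere bundle $S^{\ast}M$; its integral then vanishes by Stokes' theorem on the closed manifold $S^{\ast}M$. Equivalently (and this is essentially the route taken in [6, p.\ 457]), one verifies that the class of $B$ in the quotient algebra $\Psi^{0}_{\operatorname{cl}}/\Psi^{-\infty}$ is a commutator: modulo smoothing, the identity $B = B^{2}$ allows one to construct classical $\Psi$DOs $P, Q$ with $B \equiv [P,Q]$, and then $\rres(B) = \rres([P,Q]) = 0$ by the trace property, while smoothing corrections contribute nothing because $\rres$ vanishes on $\Psi^{-\infty}$.

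The main obstacle lies in the final symbolic manipulation: producing the explicit divergence (or commutator) representative requires a careful bookkeeping of the subprincipal symbols generated by the star-product expansion of $B^{2}=B$. Rather than carrying out this calculation from scratch, I would invoke Wodzicki's theorem as stated in [30] and used in [6, 15]; the lemma is then an immediate consequence, and indeed the paper's formulation of $\rres$ in \eqref{E:3.28} together with Wodzicki's trace property makes this reduction transparent.
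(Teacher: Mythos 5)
Your final fallback --- invoking Wodzicki's theorem via [30], [6], [15] --- is exactly what the paper does: Lemma~\ref{Lemma:3.7} is stated with the remark that it is ``well known ([30], [6])'' and no proof is given, so as a citation your answer is on target.

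The intermediate strategy you sketch, however, contains a genuine gap. The assertion that ``modulo smoothing, the identity $B=B^{2}$ allows one to construct classical $\Psi$DOs $P,Q$ with $B\equiv[P,Q]$'' is not a step \emph{toward} the lemma; it is essentially \emph{equivalent} to it, and hence circular as a proof strategy. By Wodzicki's uniqueness theorem, on a connected compact manifold of dimension at least one the residue spans the trace space of $\Psi^{\mathbb{Z}}_{\mathrm{cl}}/\Psi^{-\infty}$, so an operator lies in the commutator subspace of that algebra precisely when its residue vanishes. Asserting a commutator representation for $B$ therefore already presupposes $\rres(B)=0$, and there is no elementary reason a pseudodifferential idempotent should be a single commutator (the finite-dimensional obstruction --- a nonzero idempotent matrix has nonzero trace --- already warns against this). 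Your ``total divergence on $S^{*}M$'' remark is likewise a restatement of the conclusion rather than a route to it, since $\int_{S^{*}M}\operatorname{tr}(b_{-n})=0$ is the content of the lemma. The mechanism that actually makes the homotopy method work, and which is what Wodzicki and Br\"uning--Lesch use, is subtler: differentiating $B_{t}^{2}=B_{t}$ along a smooth path of $\Psi$DO projections gives $B_{t}\dot B_{t}B_{t}=0=(I-B_{t})\dot B_{t}(I-B_{t})$, whence
\[
\dot B_{t}\;=\;\bigl[\,B_{t},\,[\,B_{t},\,\dot B_{t}\,]\,\bigr].
\]
It is the \emph{derivative} $\dot B_{t}$, not $B_{t}$ itself, that is exhibited as a commutator; this shows $\rres(B_{t})$ is constant along the path, and one then reduces to a reference projection whose residue is computed directly. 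Your write-up would need this correction to stand alone; as a pure citation of Wodzicki it is fine, which is all the paper does.
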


We now go back to (\ref{E:3.21}).
We define a function $F_{\theta}(x)$ and its Mellin transform ${\mathcal M}F_{\theta}(s)$
( see [6] for details ) by

\begin{equation} \label{E:3.22}
F_{\theta}(x) = x \int_{0}^{\infty} erfc(z) e^{-2 cos\theta x z - x^{2}} dz, \qquad
{\mathcal M}F_{\theta}(s) = \int_{0}^{\infty} x^{s-1} F_{\theta}(x) dx.
\end{equation}

\noindent
Using the inverse Mellin transform, we have

\begin{eqnarray}  \label{E:3.23}
(III) & = & -  i cos\theta \sum_{0 \neq \lambda \in \Spec(|{\mathcal A}|)} d(\lambda) \hspace{0.1 cm} F_{\theta}(\sqrt{t} \lambda) \hspace{0.1 cm}
+ \hspace{0.1 cm}  O(e^{- \frac{c}{t}}) \nonumber  \\
& = & -  i cos\theta \sum_{0 \neq \lambda \in \Spec(|{\mathcal A}|)} d(\lambda) \hspace{0.1 cm} \frac{1}{2 \pi i} \int_{\re w = c \gg 0} (\sqrt{t} \lambda)^{- w}
\hspace{0.1 cm} {\mathcal M} F_{\theta}(w) \hspace{0.1 cm} dw \hspace{0.1 cm} + \hspace{0.1 cm} O(e^{- \frac{c}{t}}) \nonumber  \\
& = & \frac{-  i cos\theta}{2 \pi i} \int_{\re w = c \gg 0} t^{- \frac{w}{2}} \sum_{0 \neq \lambda \in \Spec(|{\mathcal A}|)} d(\lambda)
\hspace{0.1 cm}
\lambda^{- w}  \hspace{0.1 cm} {\mathcal M} F_{\theta}(w) \hspace{0.1 cm} dw  \hspace{0.1 cm}  +  \hspace{0.1 cm} O(e^{- \frac{c}{t}}) \nonumber  \\
& = & \frac{-  i cos\theta}{2 \pi i} \int_{\re w = c \gg 0} t^{- \frac{w}{2}}  \hspace{0.1 cm}
\eta \left( {\mathcal A}, {\mathcal \gamma} [ T^{\prime}(\theta), {\mathcal A} ] {\widetilde P} (\theta) \hspace{0.1 cm} ; \hspace{0.1 cm} w-1 \right)
\hspace{0.1 cm} {\mathcal M} F_{\theta}(w) \hspace{0.1 cm} dw \hspace{0.1 cm}  +  \hspace{0.1 cm} O(e^{- \frac{c}{t}}) \nonumber  \\
& = & -  i cos\theta \cdot \Res_{\re w < c} \left(  t^{- \frac{w}{2}} \hspace{0.1 cm}
\eta \left( {\mathcal A}, {\mathcal \gamma} [ T^{\prime}(\theta), {\mathcal A} ] {\widetilde P} (\theta)
\hspace{0.1 cm} ; \hspace{0.1 cm} w-1 \right) \hspace{0.1 cm} {\mathcal M} F_{\theta}(w) \right)
\hspace{0.1 cm} + \hspace{0.1 cm} O(e^{- \frac{c}{t}}) .
\end{eqnarray}

\noindent
Equations (\ref{E:3.17}), (\ref{E:3.18}) and (\ref{E:3.23}) show that

\begin{eqnarray} \label{E:3.24}
& & \Tr \left( - i \phi (x) {\mathcal \gamma} [T^{\prime}(\theta), \hspace{0.1 cm} {\mathcal A}] e^{-t \B^{2}_{{\widetilde P}(\theta)}} \right)  =
\frac{i}{2} \Tr \left(  {\mathcal \gamma} [T^{\prime}(\theta), \hspace{0.1 cm} {\mathcal A}]
{\widetilde P}(\theta) e^{-t {\mathcal A}^{2}}  \right)  \nonumber \\
& - &  i cos\theta \cdot \Res_{\re w < c} \left(  t^{- \frac{w}{2}}
\eta \left( {\mathcal A}, {\mathcal \gamma} [T^{\prime}(\theta), \hspace{0.1 cm}
{\mathcal A}] {\widetilde P} (\theta) \hspace{0.1 cm} ; \hspace{0.1 cm} w-1 \right) {\mathcal M} F_{\theta}(w) \right) +
O(e^{- \frac{c}{t}}) .
\end{eqnarray}

\noindent
Equations (\ref{E:3.13}), (\ref{E:3.16}) and (\ref{E:3.24}) lead to the following lemma.

\vspace{0.2 cm}

\begin{lemma}  \label{Lemma:3.4}

\begin{eqnarray*}
\Tr \left( {\dot \B(\theta)} e^{-t \B(\theta)^{2}} \right)  & = &
\frac{-i}{\sqrt{4 \pi t}} \Tr \left( {\mathcal \gamma} T^{\prime}(\theta) e^{-t {\mathcal A}^{2}} \right)
+ \frac{i}{2} \Tr \left(  {\mathcal \gamma} [T^{\prime}(\theta), \hspace{0.1 cm} {\mathcal A}] {\widetilde P}(\theta) e^{-t {\mathcal A}^{2}}  \right)  \nonumber \\
& - & i cos\theta \cdot \Res_{\re w < c} \left(  t^{- \frac{w}{2}}
\eta \left({\mathcal A}, {\mathcal \gamma} [T^{\prime}(\theta), \hspace{0.1 cm} {\mathcal A}] {\widetilde P} (\theta) ; w-1 \right)
{\mathcal M} F_{\theta}(w) \right) + O(e^{- \frac{c}{t}}) .
\end{eqnarray*}

\end{lemma}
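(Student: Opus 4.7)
The plan is to assemble Lemma \ref{Lemma:3.4} by starting from the identity (\ref{E:3.13}) and processing the two resulting traces separately, replacing the true heat kernel of $\B(\theta)^{2}$ by the cylindrical model (\ref{E:3.14}) modulo $O(e^{-c/t})$ errors (which is justified by the standard unit propagation / finite propagation speed arguments of \cite{1,3} invoked earlier in the text). Thus the proof reduces to evaluating
\[
\Tr \left( i \phi^{\prime}(x) {\mathcal \gamma} T^{\prime}(\theta) e^{-t \B^{2}_{{\widetilde P}(\theta)}} \right)
\qquad \text{and} \qquad
\Tr \left( - i \phi(x) {\mathcal \gamma} [T^{\prime}(\theta), {\mathcal A}] e^{-t \B^{2}_{{\widetilde P}(\theta)}} \right)
\]
on the semi-infinite cylinder.

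For the first trace I would substitute (\ref{E:3.14}) to obtain the three summands in (\ref{E:3.15}). The middle summand vanishes: using $\gamma\widetilde P(\theta)=(I-\widetilde P(\theta))\gamma$ from Lemma \ref{Lemma:3.2}(1) and $[T^\prime(\theta),\gamma]=0$ together with $[T^\prime(\theta),\mathcal{A}^{2}]=0$ from Lemma \ref{Lemma:3.22}, one checks that $\Tr(\gamma T^\prime(\theta)(I-2\widetilde P(\theta))e^{-t\mathcal{A}^2})=0$. The third summand involves $\phi^\prime(x)$ whose support is bounded away from $x=0$, hence gives an $O(e^{-c/t})$ contribution. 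Since $\int_{0}^{\infty}\phi^{\prime}(x)dx=-1$, what survives is precisely the term $\tfrac{-i}{\sqrt{4\pi t}}\Tr(\gamma T^\prime(\theta)e^{-t\mathcal{A}^2})$ of (\ref{E:3.16}).

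The second trace, expanded as in (\ref{E:3.17}), produces three pieces (I), (II), (III). Piece (I) is zero by the same symmetry argument as above applied to $[T^\prime(\theta),\mathcal{A}]$ in place of $T^\prime(\theta)$. Piece (II) reduces (after using $\int_{0}^{\infty}\phi(x)e^{-x^{2}/t}dx=\tfrac{\sqrt{\pi t}}{2}+O(e^{-c/t})$, since $\phi\equiv 1$ near $0$) and the vanishing of the corresponding trace with $I$ in place of $2\widetilde P(\theta)$ to the term $\tfrac{i}{2}\Tr(\gamma[T^\prime(\theta),\mathcal{A}]\widetilde P(\theta)e^{-t\mathcal{A}^2})$ appearing in (\ref{E:3.18}). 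The delicate piece is (III): here I would apply the identity (\ref{E:3.31}) to replace ${\widetilde{\mathcal{A}}}(\theta)e^{\widetilde{\mathcal{A}}(\theta)z-t\mathcal{A}^{2}}$ by $-\cos\theta\,|\mathcal{A}|(I-\widetilde P(\theta))e^{-\cos\theta|\mathcal{A}|z-t\mathcal{A}^{2}}$, then rescale $x\mapsto\sqrt{t}x$ so that $\phi(\sqrt t x)\to 1$ up to $O(e^{-c/t})$, perform the $x$-integration to obtain $\operatorname{erfc}(z)$, expand spectrally on $|\mathcal{A}|$ via (\ref{E:3.20}), and recognize the resulting object $F_{\theta}(\sqrt t\lambda)$ of (\ref{E:3.22}).

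Finally I would invoke the Mellin inversion formula for $F_{\theta}$ and shift the contour as in (\ref{E:3.23}), identifying the Dirichlet series $\sum_{\lambda\neq 0}d(\lambda)\lambda^{-w}$ with the eta function $\eta(\mathcal{A},\gamma[T^\prime(\theta),\mathcal{A}]\widetilde P(\theta);w-1)$ of (\ref{E:3.27}); this turns (III) into the claimed residue. Adding the contributions from the first trace and from (I)+(II)+(III) yields the formula in Lemma \ref{Lemma:3.4}. The main technical obstacle is step (III): one must carry out the Mellin/contour argument cleanly and verify that the commutation relations in Lemmas \ref{Lemma:3.2} and \ref{Lemma:3.22} allow the replacement $\widetilde P(\theta)\mathcal{A}^{2}=\mathcal{A}^{2}\widetilde P(\theta)$ and the diagonalization on the spectral decomposition of $|\mathcal{A}|$; the remaining error control is routine once the key symmetries have been used to kill the obstruction terms.
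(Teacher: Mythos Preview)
Your proposal is correct and follows essentially the same approach as the paper's own argument: starting from (\ref{E:3.13}), replacing the heat kernel by the cylindrical model (\ref{E:3.14}) up to $O(e^{-c/t})$, and then treating the two traces via the decompositions (\ref{E:3.15}) and (\ref{E:3.17}) with the same cancellations, the same reduction of (II), and the same Mellin-transform treatment of (III) through (\ref{E:3.19})--(\ref{E:3.23}). The only cosmetic difference is that in (\ref{E:3.19}) the paper rescales both the $x$ and $z$ variables by $\sqrt{t}$ simultaneously before integrating out $x$ to produce $\operatorname{erfc}(z)$, whereas you describe the rescaling in $x$ alone; the net effect is the same.
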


\vspace{0.2 cm}

It is known  that (\ref{E:3.9}) has at most a simple pole at $s=0$ (Theorem 3.4 in [6]) and has regular values at
$s=0$ for $\theta = 0$ and $\frac{\pi}{2}$ (for the case of $\theta = \frac{\pi}{2}$, see [14]).
Moreover, ${\mathcal M} F_{\theta}(w)$ has only simple poles at negative integers (Lemma 3.3 in [6]).
The following lemma is due to [13] (cf. [6]).

\vspace{0.2 cm}

\begin{lemma} \label{Lemma:3.5}
Let $A$ and $B$ be classical pseudodifferential operators of order $a$ and $b$, respectively, on a compact manifold $M$ with $\Dim M = m$.
If $A$ is a self-adjoint elliptic operator of positive order, then for $t \rightarrow 0^{+}$,
$$
\Tr \left( B e^{-t A^{2}} \right) \sim \sum_{j=0}^{\infty} a_{j}(A, B) t^{\frac{j-m-b}{2a}} + \sum_{j=0}^{\infty} \left( b_{j}(A, B) \log t +
c_{j}(A, B) \right) t^{j}.
$$
\end{lemma}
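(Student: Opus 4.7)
The plan is to pass through the generalized zeta function $\zeta_{A,B}(s) := \Tr(B (A^2)^{-s})$ and then recover the heat trace asymptotics by the inverse Mellin transform. For $\re s$ sufficiently large, specifically $\re(2as) > m+b$, the operator $B(A^2)^{-s}$ has order strictly less than $-m$ and is therefore of trace class, so $\zeta_{A,B}(s)$ is holomorphic on this half-plane and is related to the heat trace by
\begin{equation*}
\Gamma(s) \zeta_{A,B}(s) = \int_0^\infty t^{s-1} \Tr(Be^{-tA^2}) \, dt.
\end{equation*}

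The first step is to establish that $\zeta_{A,B}(s)$ admits a meromorphic continuation to all of $\mathbb{C}$ with controlled pole structure. For this I would invoke Seeley's construction of complex powers of an elliptic operator, which realizes $(A^2)^{-s}$ as a classical $\Psi$DO of order $-2as$ whose symbol is given by a Cauchy integral around the spectrum. Composing with the symbol expansion of $B$ and analyzing the Schwartz kernel of $B(A^2)^{-s}$ on the diagonal, one obtains the key structural fact: $\zeta_{A,B}(s)$ is meromorphic on $\mathbb{C}$ with at most simple poles at $s_j = (m+b-j)/(2a)$ for $j = 0, 1, 2, \ldots$, except that when some $s_j$ coincides with a non-positive integer the pole there may upgrade to a double pole.

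The second step is to recover the heat expansion by inverse Mellin transform,
\begin{equation*}
\Tr(Be^{-tA^2}) = \frac{1}{2\pi i} \int_{\re s = c} \Gamma(s) \zeta_{A,B}(s) t^{-s} \, ds,
\end{equation*}
and to shift the contour to the far left past the poles of $\Gamma(s) \zeta_{A,B}(s)$. A simple pole of $\zeta_{A,B}$ at $s_j$ off the non-positive integers contributes a residue proportional to $t^{-s_j} = t^{(j-m-b)/(2a)}$, producing the first sum in the stated expansion. A pole of $\Gamma(s)$ at $s=-k$ where $\zeta_{A,B}$ is regular contributes a $c_k(A,B) t^k$-term. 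When a pole of $\zeta_{A,B}$ coincides with a pole of $\Gamma$ at $s=-k$, the resulting double pole produces both a $b_k(A,B) t^k \log t$ contribution and an additional $t^k$ piece, filling in the second sum.

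The principal obstacle is the first step: producing the meromorphic extension together with sharp information on the location and order of the poles. The appearance of double poles—and hence the $\log t$ terms—is the delicate point, traceable to the noncommutative residue of $B$ at the coincident pole (cf.\ equation (\ref{E:3.28})). Once this structural input is in hand, the contour shift in the second step is entirely routine, and the lemma follows.
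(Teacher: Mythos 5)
The paper does not prove this lemma---it attributes it to Grubb--Seeley [13] (cf.\ [6]), where the heat-trace asymptotics are derived from a resolvent expansion for $(A^2-\lambda)^{-1}$ built within the ``weakly parametric'' pseudodifferential calculus. Your route---Seeley complex powers $(A^2)^{-s}$, the generalized zeta function $\zeta_{A,B}(s)$, and an inverse Mellin transform with a contour shift---is the older, classical approach (Seeley, Gilkey, Shubin) and is a perfectly legitimate alternative on a closed manifold such as $Y$; it yields the same expansion.

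One genuine inconsistency to fix. In step one you assert that $\zeta_{A,B}(s)$ itself ``may upgrade to a double pole'' when $s_j$ hits a non-positive integer, but in step two you correctly argue that the $\log t$ terms arise because a \emph{simple} pole of $\zeta_{A,B}$ collides with a simple pole of $\Gamma(s)$, so that it is the product $\Gamma(s)\zeta_{A,B}(s)$---not $\zeta_{A,B}(s)$---that acquires a double pole. These two statements cannot both hold. The correct structural input, and the one your step two actually uses, is that for $B$ a \emph{classical} $\Psi$DO, $\zeta_{A,B}(s)=\Tr\bigl(B(A^2)^{-s}\bigr)$ has at most simple poles; higher-order poles occur only for log-polyhomogeneous $B$ (e.g.\ $B$ involving $\log A$), which is outside the hypotheses. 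Were $\zeta_{A,B}$ to possess a double pole at $s=-k$, the collision with the simple pole of $\Gamma$ would produce a triple pole of $\Gamma\zeta_{A,B}$ and hence a $t^{k}(\log t)^{2}$ contribution, which is not present in the stated expansion. Once step one is corrected to claim only simple poles of $\zeta_{A,B}$, the argument is sound, modulo the standard caveat that when $\Ker A\neq 0$ one must first restrict to $(\Ker A)^{\perp}$ (or shift $A^2$ by a constant) so that $(A^2)^{-s}$ and the Mellin integral make sense; the finite-dimensional kernel contributes only a term smooth in $t$.
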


\vspace{0.2 cm}

\noindent
The equation (\ref{E:3.156}) with
Lemma \ref{Lemma:3.4} and Lemma \ref{Lemma:3.5}  (cf. Theorem 3.4 and 3.5 in [6]) implies that

\begin{equation} \label{E:3.25}
\frac{d}{d \theta} \Res_{s=0} \eta_{\B(\theta)}(s) \hspace{0.1 cm} = \hspace{0.1 cm} \Res_{s=0} \left( \frac{d}{d \theta} \eta_{\B(\theta)}(s) \right)
\hspace{0.1 cm} = \hspace{0.1 cm}
\frac{4}{\sqrt{\pi}} \hspace{0.1 cm} a_{- \frac{1}{2}, 1} (\B(\theta), {\dot B}(\theta))
\hspace{0.1 cm} = \hspace{0.1 cm}
\frac{1}{\pi} \rres \left( i {\mathcal \gamma} T^{\prime}(\theta) \right),
\end{equation}

\vspace{0.1 cm}

\noindent
where $a_{- \frac{1}{2}, 1} (\B(\theta), {\dot B}(\theta))$ is the coefficient of $t^{- \frac{1}{2}} \log t$ in the asymptotic expansion of
$\Tr \left( {\dot \B(\theta)} e^{-t \B(\theta)^{2}} \right) $ for $t \rightarrow 0^{+}$.

\vspace{0.2 cm}

\begin{lemma} \label{Lemma:3.6}
\begin{eqnarray*}
\Tr \left( i \hspace{0.1 cm} {\mathcal \gamma} T^{\prime}(\theta) \hspace{0.1 cm} e^{-t {\mathcal A}^{2}} \right) \hspace{0.1 cm} = \hspace{0.1 cm} 0.
\end{eqnarray*}

\noindent
Hence,
$\rres \left( i {\mathcal \gamma} T^{\prime}(\theta) \right) = 0 $
and $\eta_{\B(\theta)}(s)$ has a regular value at $s=0$ for each $0 \leq \theta \leq \frac{\pi}{2}$.
\end{lemma}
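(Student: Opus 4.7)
The plan is to exhibit the Hodge-like involution $\Gamma^{Y}$, extended diagonally to the two-slot structure of $\Omega^{\even}(M,E)|_{Y}$, as a symmetry under which $i{\mathcal \gamma}$ and $e^{-t{\mathcal A}^{2}}$ are invariant while $T'(\theta)$ changes sign; cyclicity of the trace together with $(\Gamma^{Y})^{2} = \Id$ will then give
$$
\Tr\bigl(i{\mathcal \gamma}\,T'(\theta)\,e^{-t{\mathcal A}^{2}}\bigr) \ = \ \Tr\bigl(\Gamma^{Y}\,i{\mathcal \gamma}\,T'(\theta)\,e^{-t{\mathcal A}^{2}}\,\Gamma^{Y}\bigr) \ = \ -\,\Tr\bigl(i{\mathcal \gamma}\,T'(\theta)\,e^{-t{\mathcal A}^{2}}\bigr) \ = \ 0.
$$

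First I would collect the easy symmetries. Since $\Gamma^{Y}\beta = \beta\Gamma^{Y}$ on $\Omega^{\bullet}(Y,E|_{Y})$, one has $[\Gamma^{Y}, {\mathcal \gamma}] = 0$, so $\Gamma^{Y}$ preserves the $\pm i$-eigenspaces of ${\mathcal \gamma}$; the identity $(\nabla^{Y} + \Gamma^{Y}\nabla^{Y}\Gamma^{Y})\Gamma^{Y} = \B_{Y} = \Gamma^{Y}(\nabla^{Y} + \Gamma^{Y}\nabla^{Y}\Gamma^{Y})$ gives $[\Gamma^{Y}, \B_{Y}] = 0$ and $[\Gamma^{Y}, {\mathcal A}] = 0$, whence $\Gamma^{Y}$ commutes with $e^{-t{\mathcal A}^{2}}$ and, preserving $\Ker\B_{Y}^{2}$ together with its orthogonal complement, with ${\mathcal P}_{\ast}$ as well.

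The main step is to prove $\{\Gamma^{Y}, T'(\theta)\} = 0$. Using $\nabla^{Y} + \Gamma^{Y}\nabla^{Y}\Gamma^{Y} = \B_{Y}\Gamma^{Y}$ in (3.52), I would rewrite
$$
U_{\Pi_{>}} \ = \ (\B_{Y}^{2})^{-\frac{1}{2}}\B_{Y}\,\Gamma^{Y}\left(\begin{array}{clcr} 0 & -1 \\ -1 & 0\end{array}\right),
$$
which immediately yields $[\Gamma^{Y}, U_{\Pi_{>}}] = 0$ (the matrix acts on the two-slot structure and hence commutes with the diagonal $\Gamma^{Y}$). On the other hand, $\Gamma^{Y}$ interchanges $\Imm\nabla^{Y}$ and $\Imm\Gamma^{Y}\nabla^{Y}\Gamma^{Y}$, so it anticommutes with the sign operator $(\B_{Y}^{2})^{-1}((\B_{Y}^{2})^{-} - (\B_{Y}^{2})^{+})$ appearing in both $U_{{\mathcal P}_{-}}$ and $U_{{\mathcal P}_{-}}^{\ast}$; thus $\{\Gamma^{Y}, U_{{\mathcal P}_{-}}\} = \{\Gamma^{Y}, U_{{\mathcal P}_{-}}^{\ast}\} = 0$. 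Combining gives $\Gamma^{Y}\,U_{{\mathcal P}_{-}}^{\ast}U_{\Pi_{>}} = -\,U_{{\mathcal P}_{-}}^{\ast}U_{\Pi_{>}}\,\Gamma^{Y}$, and the definition (3.111) of $T(\theta)$ combined with $[\Gamma^{Y}, {\mathcal \gamma}] = 0 = [\Gamma^{Y}, {\mathcal P}_{\ast}]$ yields the asserted anticommutation.

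The second conclusion of the lemma then follows from the eta-residue machinery. Since $i{\mathcal \gamma}T'(\theta)$ commutes with ${\mathcal A}^{2}$ and vanishes on $\Ker\,{\mathcal A}^{2}$ (because of the ${\mathcal P}_{\ast}$-factor in $T'(\theta)$), the identical vanishing of the short-time trace forces $\eta({\mathcal A}, i{\mathcal \gamma}T'(\theta); s) \equiv 0$ via (3.27), hence $\rres(i{\mathcal \gamma}T'(\theta)) = 0$ by (3.28). Then (3.25) makes $\Res_{s=0}\eta_{\B(\theta)}(s)$ independent of $\theta$, and its vanishing at the endpoints $\theta = 0, \pi/2$ (from [14]) propagates to every $\theta \in [0,\pi/2]$. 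The main obstacle I anticipate is the bookkeeping in the key anticommutation: one must keep both the $2\times 2$ tangential/normal structure and the $\pm i$-eigenspace splitting of ${\mathcal \gamma}$ straight. Once that is under control, the supertrace argument at the top is routine.
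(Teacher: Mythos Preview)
Your argument is correct and rests on the same key observation as the paper's proof --- namely, that $\Gamma^{Y}$ anticommutes with the sign operator $(\B_{Y}^{2})^{-1}\bigl((\B_{Y}^{2})^{-}-(\B_{Y}^{2})^{+}\bigr)$ while commuting with ${\mathcal \gamma}$, $\B_{Y}$, ${\mathcal A}$, and ${\mathcal P}_{\ast}$ --- but your route is more direct. The paper first invokes Lemma~\ref{Lemma:3.1}\,(3) together with some $2\times 2$ block factorizations in the $\pm i$-eigenspace decomposition to rewrite the trace of $i{\mathcal \gamma}T'(\theta)e^{-t{\mathcal A}^{2}}$ in a symmetrized diagonal form, then expands everything in tangential/normal coordinates, and only at the last step applies the $\Gamma^{Y}$-anticommutation to conclude vanishing. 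You bypass the preliminary symmetrization entirely by recognizing at the outset that conjugation by $\Gamma^{Y}$ preserves ${\mathcal \gamma}$, $e^{-t{\mathcal A}^{2}}$, and $U_{\Pi_{>}}$ while flipping the sign of $U_{{\mathcal P}_{-}}^{\ast}$, hence of $T'(\theta)$; this yields $\Tr=-\Tr$ in one line. The advantage of your packaging is economy and transparency; the paper's intermediate step has the minor benefit of producing an explicit closed formula for the operator whose trace vanishes, which could be reused elsewhere but is not needed here. Your deduction of the two consequences (via (\ref{E:3.28}) and (\ref{E:3.25}) together with regularity at the endpoints) matches the paper's.
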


\begin{proof}
We recall that $T^{\prime}(\theta) = - i \left( \begin{array}{clcr} U_{{\mathcal P}_{-}}^{\ast} U_{\Pi_{>}} & 0 \\ 0 & 0 \end{array} \right)
{\mathcal P}_{\ast}$.
Using (3) in Lemma \ref{Lemma:3.1}, we have

$$
 \Tr \left( i \hspace{0.1 cm} {\mathcal \gamma} T^{\prime}(\theta) \hspace{0.1 cm} e^{-t {\mathcal A}^{2}} \right)  =
\Tr \left( {\mathcal \gamma} \left( \begin{array}{clcr} U_{{\mathcal P}_{-}}^{\ast} U_{\Pi_{>}} & 0 \\ 0 & 0 \end{array} \right)
{\mathcal P}_{\ast} e^{- t {\mathcal A}^{2}}
\right)
\hspace{0.1 cm} = \hspace{0.1 cm}
\Tr \left( {\mathcal \gamma} \left( \begin{array}{clcr} 0 & U_{{\mathcal P}_{-}}^{\ast} \\ 0 & 0 \end{array} \right)
\left( \begin{array}{clcr} 0 & 0 \\ U_{\Pi_{>}} & 0 \end{array} \right) {\mathcal P}_{\ast} e^{- t {\mathcal A}^{2}} \right)
$$
$$
 =
\Tr \left( - {\mathcal \gamma} \left( \begin{array}{clcr} 0 & 0 \\ U_{\Pi_{>}} & 0 \end{array} \right)
\left( \begin{array}{clcr} 0 & U_{{\mathcal P}_{-}}^{\ast} \\ 0 & 0 \end{array} \right)
{\mathcal P}_{\ast} e^{- t {\mathcal A}^{2}} \right)
\hspace{0.1 cm} = \hspace{0.1 cm}
 \Tr \left( {\mathcal \gamma} \left( \begin{array}{clcr} 0 & 0 \\ 0 & U_{{\mathcal P}_{-}} U_{\Pi_{>}}^{\ast} \end{array} \right)
 {\mathcal P}_{\ast} e^{- t {\mathcal A}^{2}}
\right) .
$$

\vspace{0.2 cm}

\noindent
Since $\Gamma^{Y}$ anticommutes with  $\left( \left( \B_{Y}^{2} \right)^{-} - \left( \B_{Y}^{2} \right)^{+} \right)$,
we have, by (\ref{E:1.9}) and (\ref{E:3.54}),

\begin{eqnarray*}
& & \Tr \left( i \hspace{0.1 cm} {\mathcal \gamma} T^{\prime}(\theta) \hspace{0.1 cm} e^{-t {\mathcal A}^{2}} \right)   \hspace{0.1 cm} = \hspace{0.1 cm}
\frac{1}{2} \Tr \left( {\mathcal \gamma}
\left( \begin{array}{clcr} U_{{\mathcal P}_{-}}^{\ast} U_{\Pi_{>}} & 0 \\ 0 & U_{{\mathcal P}_{-}} U_{\Pi_{>}}^{\ast} \end{array} \right)
{\mathcal P}_{\ast} e^{- t {\mathcal A}^{2}} \right)   \\
& = &
\frac{1}{2} \Tr \left\{ i \beta \left( \left( \B_{Y}^{2} \right)^{-1} \left( \left( \B_{Y}^{2} \right)^{-} - \left( \B_{Y}^{2} \right)^{+} \right) \right)
\left( \B_{Y}^{2} \right)^{-\frac{1}{2}} \left( \nabla^{Y} \Gamma^{Y} + \Gamma^{Y} \nabla^{Y} \right) \left( \begin{array}{clcr} 0 & -1 \\ -1 & 0 \end{array}
\right) {\mathcal P}_{\ast} e^{- t {\mathcal A}^{2}} \right\}
\hspace{0.1 cm} = \hspace{0.1 cm} 0 ,
\end{eqnarray*}

\noindent
which completes the proof of the lemma.
\end{proof}

\vspace{0.2 cm}

\noindent
Since ${\mathcal M} F_{\theta}(w)$ has a regular value at $w=1$,
Lemma \ref{Lemma:3.4} and (\ref{E:3.156}) imply that

\begin{eqnarray}   \label{E:3.29}
& & \frac{d}{d \theta} \eta_{\B (\theta)}(0 \hspace{0.1 cm} ; c(\theta) ) \hspace{0.1 cm}  =  \hspace{0.1 cm}
- \frac{2}{\sqrt{\pi}} \hspace{0.1 cm} a_{- \frac{1}{2}, 0}(\B(\theta), \hspace{0.1 cm} {\dot \B}(\theta)) \\
& = &  - \frac{1}{\sqrt{\pi}} \hspace{0.1 cm} a_{- \frac{1}{2}, 0}
\left( {\mathcal A}, \hspace{0.1 cm}  i {\mathcal \gamma} [T^{\prime}(\theta), \hspace{0.1 cm} {\mathcal A}] {\widetilde P}(\theta) \right)
 + \hspace{0.1 cm} \frac{2}{\sqrt{\pi}} \hspace{0.1 cm} cos\theta \cdot {\mathcal M}F_{\theta}(1) \Res_{w=1}
\left( \eta \left( {\mathcal A}, \hspace{0.1 cm}
i {\mathcal \gamma} [T^{\prime}(\theta), \hspace{0.1 cm} {\mathcal A}]  {\widetilde P}(\theta) \hspace{0.1 cm} ; \hspace{0.1 cm} w-1 \right) \right). \nonumber
\end{eqnarray}

\noindent
We note that

\begin{eqnarray}  \label{E:3.30}
a_{- \frac{1}{2}, 0} \left( {\mathcal A}, \hspace{0.1 cm} i {\mathcal \gamma} [T^{\prime}(\theta), \hspace{0.1 cm} {\mathcal A}]  {\widetilde P}(\theta) \right)
& = & \frac{\sqrt{\pi}}{2} \Res_{s=0}
\left( \eta \left( {\mathcal A}, \hspace{0.1 cm}  i {\mathcal \gamma} [T^{\prime}(\theta), \hspace{0.1 cm} {\mathcal A}]
{\widetilde P}(\theta) \hspace{0.1 cm} ; \hspace{0.1 cm} s \right) \right) \nonumber  \\
& = & \frac{\sqrt{\pi}}{2} \Res_{s=0}
\left( \eta \left( {\mathcal A}, \hspace{0.1 cm}  i {\mathcal \gamma} [T^{\prime}(\theta), \hspace{0.1 cm} ( sign {\mathcal A} )]
{\widetilde P}(\theta) \hspace{0.1 cm} ; \hspace{0.1 cm} s-1 \right) \right) \nonumber  \\
& = & \frac{\sqrt{\pi}}{2} \rres \left( i {\mathcal \gamma} [T^{\prime}(\theta), \hspace{0.1 cm} ( sign {\mathcal A} )] {\widetilde P}(\theta) \right).
\end{eqnarray}

\noindent
Similarly,

\begin{eqnarray}   \label{E:3.31}
\Res_{w=1} \left( \eta({\mathcal A}, \hspace{0.1 cm}
i {\mathcal \gamma} [T^{\prime}(\theta), \hspace{0.1 cm} {\mathcal A}] {\widetilde P}(\theta) \hspace{0.1 cm} ; \hspace{0.1 cm} w-1 ) \right)
& = & \Res_{w=1}
\left( \eta \left( {\mathcal A}, \hspace{0.1 cm} i {\mathcal \gamma} [T^{\prime}(\theta), \hspace{0.1 cm} ( sign {\mathcal A} )]
{\widetilde P}(\theta) \hspace{0.1 cm} ; \hspace{0.1 cm} w-2 \right) \right)  \nonumber \\
& = & \Res_{w=-1}
\left( \eta \left( {\mathcal A}, \hspace{0.1 cm} i {\mathcal \gamma} [T^{\prime}(\theta), \hspace{0.1 cm} ( sign {\mathcal A} )]
{\widetilde P}(\theta) \hspace{0.1 cm} ; \hspace{0.1 cm} w \right) \right)   \nonumber \\
& = & \rres \left( i {\mathcal \gamma} [T^{\prime}(\theta), \hspace{0.1 cm} ( sign {\mathcal A} )] {\widetilde P}(\theta) \right).
\end{eqnarray}

\vspace{0.2 cm}

\noindent
The following lemma is straightforward.
\vspace{0.2 cm}

\begin{lemma}  \label{Lemma:3.10}
\begin{eqnarray*}
T^{\prime}(\theta) {\widetilde P}(\theta) \hspace{0.1 cm} = \hspace{0.1 cm}
\left( I - {\widetilde P}(\theta) \right) T^{\prime}(\theta) -
\frac{i}{2} \left( \begin{array}{clcr} 0 & {\mathcal W}^{\ast} \\ - {\mathcal W} & 0 \end{array} \right) {\mathcal P}_{\ast},
\end{eqnarray*}
where ${\mathcal W} := - U_{\Pi_{>}} sin\theta + U_{{\mathcal P}_{-}} cos\theta$.
\end{lemma}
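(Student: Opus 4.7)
The plan is a direct block-matrix computation on the splitting
\[\Omega^{\even}(M,E)|_Y = \bigl(\Omega^{\even}(M,E)|_Y\bigr)^{\ast}_{+i} \oplus \bigl(\Omega^{\even}(M,E)|_Y\bigr)^{\ast}_{-i} \oplus {\mathcal L}_0 \oplus {\mathcal L}_1,\]
using only the explicit formulas for $T(\theta)$, $\widetilde P(\theta)$ and the properties collected in Lemma \ref{Lemma:3.1}. Differentiating (\ref{E:3.111}) gives
\[T^{\prime}(\theta) = -i \left(\begin{array}{clcr} U_{{\mathcal P}_{-}}^{\ast} U_{\Pi_{>}} & 0 \\ 0 & 0 \end{array}\right){\mathcal P}_{\ast}.\]
Because ${\mathcal P}_{\ast}$ annihilates ${\mathcal L}_0 \oplus {\mathcal L}_1$ and the range of $T^{\prime}(\theta)$ lies in the orthogonal complement of ${\mathcal H}^{\bullet}(Y,E|_Y)$, the summand ${\mathcal P}_{{\mathcal L}_0}$ in $\widetilde P(\theta)$ is killed on either side of every product considered, so I may reduce to the $(\Omega^{\even})^{\ast}$ part throughout.

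Next, writing
\[I - \widetilde P(\theta) = \frac{1}{2}\left(\begin{array}{clcr} \Id & -P(\theta)^{\ast} \\ -P(\theta) & \Id \end{array}\right){\mathcal P}_{\ast} + {\mathcal P}_{{\mathcal L}_1},\]
a direct multiplication shows that the diagonal $(1,1)$-entries of $T^{\prime}(\theta)\widetilde P(\theta)$ and of $(I-\widetilde P(\theta))T^{\prime}(\theta)$ both equal $-\tfrac{i}{2}U_{{\mathcal P}_-}^{\ast}U_{\Pi_>}$ and hence cancel, leaving only the off-diagonal contribution
\[T^{\prime}(\theta)\widetilde P(\theta) - (I - \widetilde P(\theta))T^{\prime}(\theta) = -\frac{i}{2}\left(\begin{array}{clcr} 0 & U_{{\mathcal P}_-}^{\ast}U_{\Pi_>} P(\theta)^{\ast} \\ P(\theta) U_{{\mathcal P}_-}^{\ast}U_{\Pi_>} & 0 \end{array}\right){\mathcal P}_{\ast}.\]

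Finally, I identify these two off-diagonal blocks with $\mathcal{W}^{\ast}$ and $-\mathcal{W}$. Expanding $P(\theta)^{\ast} = \cos\theta\,U_{\Pi_>}^{\ast} + \sin\theta\,U_{{\mathcal P}_-}^{\ast}$, applying the unitarity $U_{\Pi_>}^{\ast}U_{\Pi_>} = U_{{\mathcal P}_-}^{\ast}U_{{\mathcal P}_-} = \Id$ from Lemma \ref{Lemma:3.1}(1), and the anticommutation $U_{\Pi_>}U_{{\mathcal P}_-}^{\ast} = -U_{{\mathcal P}_-}U_{\Pi_>}^{\ast}$ from Lemma \ref{Lemma:3.1}(3) (which yields $U_{{\mathcal P}_-}^{\ast}U_{\Pi_>}U_{{\mathcal P}_-}^{\ast} = -U_{\Pi_>}^{\ast}$), I get
\[U_{{\mathcal P}_-}^{\ast}U_{\Pi_>}P(\theta)^{\ast} = \cos\theta\,U_{{\mathcal P}_-}^{\ast} - \sin\theta\,U_{\Pi_>}^{\ast} = \mathcal{W}^{\ast},\]
and the analogous computation for $P(\theta)U_{{\mathcal P}_-}^{\ast}U_{\Pi_>}$ produces $-U_{{\mathcal P}_-}\cos\theta + U_{\Pi_>}\sin\theta = -\mathcal{W}$.

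There is no real obstacle, only bookkeeping: one must track the $\pm i$-eigenspaces of $\gamma$ in which each factor of $U_{\Pi_>}$, $U_{{\mathcal P}_-}$ (and their adjoints) is defined, so that $U^{\ast}U = \Id$ and the anticommutation relation are applied on the correct subspaces. Once that is set up, everything reduces to the two one-line algebraic identities above.
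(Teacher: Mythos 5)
Your computation is correct and is the natural way to verify what the paper simply calls ``straightforward'' (the paper supplies no proof of Lemma~\ref{Lemma:3.10}). The block-matrix bookkeeping on $(\Omega^{\even}(M,E)|_Y)^{\ast}_{+i}\oplus(\Omega^{\even}(M,E)|_Y)^{\ast}_{-i}$ checks out: the two $(1,1)$-entries $-\tfrac{i}{2}U_{{\mathcal P}_-}^{\ast}U_{\Pi_>}$ cancel, the harmonic parts are annihilated by ${\mathcal P}_{\ast}$ so $\mathcal{P}_{\mathcal{L}_0}$ and $\mathcal{P}_{\mathcal{L}_1}$ never contribute, and the identities $U_{{\mathcal P}_-}^{\ast}U_{\Pi_>}P(\theta)^{\ast}=\mathcal{W}^{\ast}$ and $P(\theta)U_{{\mathcal P}_-}^{\ast}U_{\Pi_>}=-\mathcal{W}$ follow from unitarity and the anticommutation $U_{\Pi_>}U_{{\mathcal P}_-}^{\ast}=-U_{{\mathcal P}_-}U_{\Pi_>}^{\ast}$ exactly as you write.
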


\vspace{0.2 cm}

\noindent
Equations (\ref{E:3.29}), (\ref{E:3.30}), (\ref{E:3.31}) and Lemma \ref{Lemma:3.10} lead to the following result.

\vspace{0.2 cm}

\begin{lemma}  \label{Lemma:3.9}
\begin{eqnarray*}
\frac{d}{d \theta} \eta_{\B (\theta)} (0 \hspace{0.1 cm} ; c(\theta) ) & = &
 \left( - \frac{1}{2}  +  \frac{2 cos\theta}{\sqrt{\pi}} \cdot {\mathcal M}F_{\theta}(1) \right)
\rres \left( i {\mathcal \gamma} [T^{\prime}(\theta), \hspace{0.1 cm} ( sign {\mathcal A} )] {\widetilde P}(\theta) \right)
\hspace{0.1 cm} = \hspace{0.1 cm} 0.
\end{eqnarray*}
\end{lemma}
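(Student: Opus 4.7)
The first equality is a direct substitution: plugging (\ref{E:3.30}) for $a_{-\frac{1}{2},0}$ and (\ref{E:3.31}) for the residue at $w=1$ into (\ref{E:3.29}) produces exactly the displayed combination, with Lemma \ref{Lemma:3.10} feeding into the earlier derivation of (\ref{E:3.29}) itself. The substance of the lemma is therefore the vanishing
\begin{equation*}
\rres\!\left( i\gamma\, [T'(\theta),\, (\text{sign}\,\mathcal{A})]\, \widetilde P(\theta) \right) \;=\; 0,
\end{equation*}
which is what I propose to establish.

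My plan is to evaluate this residue explicitly in the block decomposition of $(\Omega^{\even}(M,E)|_Y)^\ast$ into the $\pm i$-eigenspaces of ${\mathcal \gamma}$ from (\ref{E:3.1}). With respect to this splitting ${\mathcal \gamma}$ acts as $\mathrm{diag}(iI,-iI)$, the operator $T'(\theta)$ has block form $-i\begin{pmatrix}V & 0\\0 & 0\end{pmatrix}{\mathcal P}_\ast$ with $V:=U_{{\mathcal P}_-}^\ast U_{\Pi_>}$, and by Lemma \ref{Lemma:3.1}(2) one has
\begin{equation*}
(\text{sign}\,{\mathcal A}) \;=\; \Pi_> - \Pi_< \;=\; \begin{pmatrix} 0 & U_{\Pi_>}^\ast \\ U_{\Pi_>} & 0 \end{pmatrix}, \qquad
\widetilde P(\theta) \;=\; \tfrac12\begin{pmatrix} I & P(\theta)^\ast \\ P(\theta) & I \end{pmatrix}{\mathcal P}_\ast + {\mathcal P}_{{\mathcal L}_0}.
\end{equation*}
The harmonic summand ${\mathcal P}_{{\mathcal L}_0}$ is annihilated by $T'(\theta)$ and contributes nothing. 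A direct $2\times 2$ multiplication then produces
\begin{equation*}
 i{\mathcal \gamma}\,[T'(\theta),(\text{sign}\,{\mathcal A})]\,\widetilde P(\theta) \;=\; \tfrac{i}{2}\begin{pmatrix} VU_{\Pi_>}^\ast P & VU_{\Pi_>}^\ast \\ U_{\Pi_>}V & U_{\Pi_>}VP^\ast \end{pmatrix},
\end{equation*}
whose noncommutative residue is the sum of the residues of its diagonal blocks.

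Expanding $P(\theta) = U_{\Pi_>}\cos\theta + U_{{\mathcal P}_-}\sin\theta$ (and its adjoint), and using the unitarities $U_{\Pi_>}U_{\Pi_>}^\ast = I = U_{{\mathcal P}_-}U_{{\mathcal P}_-}^\ast$ together with the anticommutation $U_{\Pi_>}U_{{\mathcal P}_-}^\ast = -U_{{\mathcal P}_-}U_{\Pi_>}^\ast$ from Lemma \ref{Lemma:3.1}(3), the $\sin\theta$ contributions collapse to multiples of $\rres(I)$ and vanish by Lemma \ref{Lemma:3.7}, while the $\cos\theta$ contributions both reduce, via the cyclicity of $\rres$, to $\rres(V)$. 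The combined expression is
\begin{equation*}
\rres\!\left(i{\mathcal \gamma}\,[T'(\theta),(\text{sign}\,{\mathcal A})]\,\widetilde P(\theta)\right) \;=\; i\cos\theta\,\rres(V).
\end{equation*}

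The remaining task, and the main point of the argument, is $\rres(V) = 0$. Using Lemma \ref{Lemma:3.1}(3) once more,
\begin{equation*}
V^2 \;=\; U_{{\mathcal P}_-}^\ast (U_{\Pi_>}U_{{\mathcal P}_-}^\ast)U_{\Pi_>} \;=\; -U_{{\mathcal P}_-}^\ast U_{{\mathcal P}_-}\,U_{\Pi_>}^\ast U_{\Pi_>} \;=\; -I, \qquad V^\ast \;=\; U_{\Pi_>}^\ast U_{{\mathcal P}_-} \;=\; -V,
\end{equation*}
so $iV$ is a self-adjoint involution on $(\Omega^{\even}(M,E)|_Y)^\ast_{+i}$. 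Setting $Q := (I + iV)/2$, direct checks give $Q^\ast = Q$ and $Q^2 = Q$, so $Q$ is a classical order-zero $\Psi$DO that is an orthogonal projection, and $V = -i(2Q - I)$. Lemma \ref{Lemma:3.7} applied to both $Q$ and the identity then yields $\rres(V) = -2i\,\rres(Q) + i\,\rres(I) = 0$. The main technical obstacle I foresee is bookkeeping the harmonic versus non-harmonic contributions in the block calculation, but this is handled by the fact that $T'(\theta)$ vanishes on ${\mathcal P}_{{\mathcal L}_0}$ and $V$ is a genuine classical order-zero $\Psi$DO on the closed manifold $Y$, so all invocations of Lemma \ref{Lemma:3.7} and cyclicity of $\rres$ are legitimate.
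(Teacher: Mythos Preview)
Your proof is correct. The first equality is, as you say, just (\ref{E:3.29})--(\ref{E:3.31}) combined; your remark that Lemma~\ref{Lemma:3.10} ``feeds into (\ref{E:3.29})'' is inaccurate---(\ref{E:3.29}) follows from (\ref{E:3.156}) and Lemma~\ref{Lemma:3.4} alone, while Lemma~\ref{Lemma:3.10} is used only in the paper's own proof of the vanishing---but this does not affect your argument, which never invokes Lemma~\ref{Lemma:3.10}.

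Your route to the vanishing is genuinely different from the paper's. The paper splits the commutator, then repeatedly uses the reflection identity $\gamma\widetilde P(\theta)=(I-\widetilde P(\theta))\gamma$ together with Lemma~\ref{Lemma:3.10} and Lemma~\ref{Lemma:3.2}(2) to reduce one half of the commutator to $\cos\theta\,\rres(i\gamma T'(\theta)\widetilde P(\theta))$, which vanishes via Lemma~\ref{Lemma:3.6}, plus a remainder involving ${\mathcal W}$ that is shown to equal $-\tfrac{i\sin\theta}{2}\rres({\mathcal P}_\ast)=0$. You instead carry out a direct $2\times2$ block computation in the $\pm i$-eigenspace splitting, arriving at $i\cos\theta\,\rres(V)$ with $V=U_{{\mathcal P}_-}^\ast U_{\Pi_>}$, and then observe that $V^2=-I$, $V^\ast=-V$ force $(I+iV)/2$ to be a projection, so $\rres(V)=0$ by Lemma~\ref{Lemma:3.7}. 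Your approach is more elementary and self-contained: it bypasses Lemma~\ref{Lemma:3.6}, Lemma~\ref{Lemma:3.10}, and the structural identity of Lemma~\ref{Lemma:3.2}(2), trading them for a single algebraic observation about $V$. The paper's approach, on the other hand, makes more transparent how the projection $\widetilde P(\theta)$ and its $\gamma$-reflection drive the cancellation.
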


\begin{proof}
We note that
$$
\rres \left( i {\mathcal \gamma} [T^{\prime}(\theta), \hspace{0.1 cm} (sign {\mathcal A})] {\widetilde P}(\theta) \right) =
\rres \left( i {\mathcal \gamma} T^{\prime}(\theta) (sign {\mathcal A}) {\widetilde P}(\theta) \right) -
\rres \left( i {\mathcal \gamma} (sign {\mathcal A}) T^{\prime}(\theta) {\widetilde P}(\theta) \right).
$$
We are going to show that $\rres \left( i {\mathcal \gamma} T^{\prime}(\theta) (sign {\mathcal A}) {\widetilde P}(\theta) \right)  = 0$ and
$\rres \left( i {\mathcal \gamma} (sign {\mathcal A}) T^{\prime}(\theta) {\widetilde P}(\theta) \right) = 0$ can be shown in the same way.
Since $\rres$ is a trace,

\begin{eqnarray} \label{E:3.32}
& & \rres \left( i {\mathcal \gamma} T^{\prime}(\theta)  (sign {\mathcal A}) {\widetilde P}(\theta) \right) \hspace{0.1 cm} = \hspace{0.1 cm}
\rres \left( i {\widetilde P}(\theta) {\mathcal \gamma} T^{\prime}(\theta) (sign {\mathcal A}) {\widetilde P}(\theta) \right) \nonumber \\
& = & \rres \left( i {\mathcal \gamma} (I - {\widetilde P}(\theta))
T^{\prime}(\theta) (sign {\mathcal A}) {\widetilde P}(\theta) \right) \nonumber \\
& = & \rres  \left( i {\mathcal \gamma} \left( T^{\prime}(\theta) {\widetilde P}(\theta) +
\frac{i}{2} \left( \begin{array}{clcr} 0 & {\mathcal W}^{\ast} \\ - {\mathcal W} & 0 \end{array} \right) {\mathcal P}_{\ast} \right)
(sign {\mathcal A}) {\widetilde P}(\theta) \right) \nonumber \\
& = & \rres  \left( i {\mathcal \gamma} T^{\prime}(\theta) {\widetilde P}(\theta)
(sign {\mathcal A}) {\widetilde P}(\theta) \right) - \frac{1}{2} \rres
\left( {\mathcal \gamma} \left( \begin{array}{clcr} 0 & {\mathcal W}^{\ast} \\ - {\mathcal W} & 0 \end{array} \right) {\mathcal P}_{\ast}
(sign {\mathcal A}) {\widetilde P}(\theta) \right) \nonumber  \\
& = & cos\theta \rres  \left( i {\mathcal \gamma} T^{\prime}(\theta) {\widetilde P}(\theta) \right) - \frac{1}{2} \rres
\left( {\mathcal \gamma} \left( \begin{array}{clcr} 0 & {\mathcal W}^{\ast} \\ - {\mathcal W} & 0 \end{array} \right) {\mathcal P}_{\ast}
(sign {\mathcal A}) {\widetilde P}(\theta) \right). \nonumber
\end{eqnarray}

\noindent
We note that

\begin{eqnarray*}
\rres \left( i {\mathcal \gamma} T^{\prime}(\theta) {\widetilde P}(\theta) \right) =
\rres \left( i {\mathcal \gamma} T^{\prime}(\theta) ( I - {\widetilde P}(\theta)) \right) =
\rres \left( i {\mathcal \gamma} T^{\prime}(\theta) \right) -
\rres \left( i {\mathcal \gamma} T^{\prime}(\theta) {\widetilde P}(\theta) \right),
\end{eqnarray*}

\noindent
which together with Lemma \ref{Lemma:3.6} shows that

\begin{equation}   \label{E:3.34}
\rres \left( i {\mathcal \gamma} T^{\prime}(\theta) {\widetilde P}(\theta) \right) =
\frac{1}{2} \rres \left( i {\mathcal \gamma} T^{\prime}(\theta) \right) = 0.
\end{equation}

\vspace{0.2 cm}

We note that
$(sign {\mathcal A}) \hspace{0.1 cm} = \hspace{0.1 cm} \left( \begin{array}{clcr} 0 & U_{\Pi_{>}}^{\ast} \\ U_{\Pi_{>}} & 0 \end{array} \right)
{\mathcal P}_{\ast}$
and ${\mathcal \gamma}$ anticommutes with $\hspace{0.1 cm} (sign {\mathcal A}) \hspace{0.1 cm} $ and
 $ \hspace{0.1 cm} \left( \begin{array}{clcr} 0 & {\mathcal W}^{\ast} \\ - {\mathcal W} & 0 \end{array} \right) \hspace{0.1 cm}$.
Hence, we have

\begin{eqnarray*}
& & \rres \left( {\mathcal \gamma} \left( \begin{array}{clcr} 0 & {\mathcal W}^{\ast} \\ - {\mathcal W} & 0 \end{array} \right) {\mathcal P}_{\ast}
(sign {\mathcal A}) {\widetilde P}(\theta) \right)   \hspace{0.1 cm} = \hspace{0.1 cm}
\rres \left( {\mathcal \gamma} \left( \begin{array}{clcr} 0 & {\mathcal W}^{\ast} \\ - {\mathcal W} & 0 \end{array} \right) {\mathcal P}_{\ast}
(sign {\mathcal A}) ( I - {\widetilde P}(\theta)) \right),
\end{eqnarray*}

\noindent
which shows that

\begin{eqnarray}  \label{E:3.52}
& & \rres \left( {\mathcal \gamma} \left( \begin{array}{clcr} 0 & {\mathcal W}^{\ast} \\ - {\mathcal W} & 0 \end{array} \right) {\mathcal P}_{\ast}
(sign {\mathcal A}) {\widetilde P}(\theta) \right)   \nonumber \\
& = & \frac{1}{2} \rres \left( {\mathcal \gamma} \left( \begin{array}{clcr} 0 & {\mathcal W}^{\ast} \\ - {\mathcal W} & 0 \end{array} \right)
{\mathcal P}_{\ast} (sign {\mathcal A}) \right)
\hspace{0.1 cm} = \hspace{0.1 cm}
\frac{1}{2} \rres \left( \left( \begin{array}{clcr} i {\mathcal W}^{\ast} U_{\Pi_{>}} & 0 \\ 0 & i {\mathcal W} U_{\Pi_{>}}^{\ast} \end{array} \right)
{\mathcal P}_{\ast} \right)  \nonumber  \\
& = & - \frac{1}{2} \rres \left( {\mathcal \gamma} (sign {\mathcal A})
\left( \begin{array}{clcr} 0 & {\mathcal W}^{\ast} \\ - {\mathcal W} & 0 \end{array} \right) {\mathcal P}_{\ast} \right)
\hspace{0.1 cm} = \hspace{0.1 cm}
\frac{1}{2} \rres \left( \left( \begin{array}{clcr} i U_{\Pi_{>}}^{\ast} {\mathcal W}  & 0 \\ 0 & i U_{\Pi_{>}} {\mathcal W}^{\ast} \end{array} \right)
{\mathcal P}_{\ast} \right).
\end{eqnarray}

\noindent
The above equality with Lemma \ref{Lemma:3.1} and Lemma \ref{Lemma:3.7}  shows that

\begin{eqnarray}  \label{E:3.53}
\rres \left( {\mathcal \gamma} \left( \begin{array}{clcr} 0 & {\mathcal W}^{\ast} \\ - {\mathcal W} & 0 \end{array} \right) {\mathcal P}_{\ast}
(sign {\mathcal A}) {\widetilde P}(\theta) \right)
& = & \frac{i}{4} \rres \left( \left( \begin{array}{clcr} {\mathcal W}^{\ast} U_{\Pi_{>}} + U_{\Pi_{>}}^{\ast} {\mathcal W} & 0 \\
0 & {\mathcal W} U_{\Pi_{>}}^{\ast} + U_{\Pi_{>}} {\mathcal W}^{\ast} \end{array} \right) {\mathcal P}_{\ast} \right)  \nonumber  \\
& = & - \frac{i sin \theta}{2}  \rres \left( {\mathcal P}_{\ast}  \right) \hspace{0.1 cm} = \hspace{0.1 cm}  0,
\end{eqnarray}

\noindent
which completes the proof of the lemma.
\end{proof}

\vspace{0.2 cm}

For one parameter family of essentially self-adjoint Dirac operators $\B_{{\widetilde P}(\theta)}$ ($ 0 \leq \theta \leq \frac{\pi}{2}$)
we define the spectral flow $\SF (\B_{{\widetilde P}(\theta)})_{\theta \in [0, \frac{\pi}{2}]}$ by
$$
\SF (\B_{{\widetilde P}(\theta)})_{\theta \in [0, \frac{\pi}{2}]} \hspace{0.1 cm} := \hspace{0.1 cm} m^{+} - m^{-},
$$
where $m^{+}$ ($m^{-}$) is the number of eigenvalues which start negative (non-negative) and end non-negative (negative).
The following formula is well known (cf. Lemma 3.4 in [17]).

\vspace{0.2 cm}

\begin{equation}    \label{E:3.4557}
\eta( \B_{{\mathcal P}_{-, {\mathcal L}_{0}}} ) - \eta ( \B_{\Pi_{>, {\mathcal L}_{0}}} ) \hspace{0.1 cm} = \hspace{0.1 cm}
\SF (\B_{{\widetilde P}(\theta)})_{\theta \in [0, \frac{\pi}{2}]}  \hspace{0.1 cm} + \hspace{0.1 cm}
\int_{0}^{\frac{\pi}{2}} \frac{d}{d \theta} \eta_{\B_{{\widetilde P}(\theta)}}(0) \hspace{0.1 cm} d \theta.
\end{equation}

\vspace{0.2 cm}

\noindent
Lemma \ref{Lemma:3.9} and the result of Nicolaescu (Theorem 7.5 in [17], [23]) show that

\vspace{0.2 cm}

\begin{equation}    \label{E:3.4558}
\eta( \B_{{\mathcal P}_{-, {\mathcal L}_{0}}} ) - \eta ( \B_{\Pi_{>, {\mathcal L}_{0}}} ) \hspace{0.1 cm} = \hspace{0.1 cm}
\SF (\B_{{\widetilde P}(\theta)})_{\theta \in [0, \frac{\pi}{2}]} \hspace{0.1 cm} = \hspace{0.1 cm}
\Mas ( {\widetilde P}(\theta), \hspace{0.1 cm} {\mathcal C}_{M})_{\theta \in [0, \frac{\pi}{2}]} ,
\end{equation}

\vspace{0.2 cm}

\noindent
where ${\mathcal C}_{M}$ is the Calder\'on projector for $\B$ on $M$
and $\Mas ( {\widetilde P}(\theta), \hspace{0.1 cm} {\mathcal C}_{M})_{\theta \in [0, \frac{\pi}{2}]}$
is the Maslov index for the path ${\widetilde P}(\theta)$ and the constant path ${\mathcal C}_{M}$.
We refer to [17] and [23] for the definitions of the Maslov index and Calder\'on projector.

\vspace{0.2 cm}

The unitary operators corresponding to the projection ${\mathcal P}_{+}$ is $- \hspace{0.1 cm} U_{{\mathcal P}_{-}}$,
which shows that for   $0 \leq \theta \leq \frac{\pi}{2}$

$$
{\widetilde P}(- \theta) \hspace{0.1 cm} = \hspace{0.1 cm}
\frac{1}{2} \left( \begin{array}{clcr} \Id & P(- \theta)^{\ast} \\ P(- \theta) & \Id \end{array} \right) {\mathcal P}_{\ast}
 \hspace{0.1 cm} + \hspace{0.1 cm} {\mathcal P}_{{\mathcal L}_{0}}
$$

\vspace{0.2 cm}

\noindent
is a smooth path connecting $\Pi_{>, {\mathcal L}_{0}}$ and ${\mathcal P}_{+, {\mathcal L}_{0}}$.
Similar computation shows that

\begin{equation}    \label{E:3.4559}
\eta( \B_{{\mathcal P}_{+, {\mathcal L}_{0}}} ) - \eta ( \B_{\Pi_{>, {\mathcal L}_{0}}} ) \hspace{0.1 cm} = \hspace{0.1 cm}
\SF (\B_{{\widetilde P}(- \theta)})_{\theta \in [0, \frac{\pi}{2}]} \hspace{0.1 cm} = \hspace{0.1 cm}
\Mas ( {\widetilde P}(- \theta), \hspace{0.1 cm} {\mathcal C}_{M})_{\theta \in [0, \frac{\pi}{2}]} .
\end{equation}

\noindent
Summarizing the above arguments we have the following theorem, which is the main result of this section.

\vspace{0.2 cm}

\begin{theorem}   \label{Theorem:3.13}
Let $(M, g^{M})$ be a compact Riemannian manifold with boundary $Y$ and $g^{M}$ be a product metric near $Y$.
Then :

\vspace{0.2 cm}

\noindent
(1) $\hspace{0.2 cm} \eta( \B_{{\mathcal P}_{-, {\mathcal L}_{0}}} ) - \eta ( \B_{\Pi_{>, {\mathcal L}_{0}}} )
\hspace{0.1 cm} = \hspace{0.1 cm} \SF (\B_{{\widetilde P}(\theta)})_{\theta \in [0, \frac{\pi}{2}]}
\hspace{0.1 cm} = \hspace{0.1 cm}  \Mas ( {\widetilde P}(\theta), \hspace{0.1 cm} {\mathcal C}_{M})_{\theta \in [0, \frac{\pi}{2}]} $.  \newline
(2) $\hspace{0.2 cm} \eta( \B_{{\mathcal P}_{+, {\mathcal L}_{0}}} ) - \eta ( \B_{\Pi_{>, {\mathcal L}_{0}}} )
\hspace{0.1 cm} = \hspace{0.1 cm}  \SF (\B_{{\widetilde P}(- \theta)})_{\theta \in [0, \frac{\pi}{2}]}
\hspace{0.1 cm} = \hspace{0.1 cm} \Mas ( {\widetilde P}(- \theta), \hspace{0.1 cm} {\mathcal C}_{M})_{\theta \in [0, \frac{\pi}{2}]} $. \newline
\end{theorem}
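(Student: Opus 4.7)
The approach is to interpolate continuously between the projections $\Pi_{>, {\mathcal L}_{0}}$ and ${\mathcal P}_{-, {\mathcal L}_{0}}$ through the family ${\widetilde P}(\theta)$ constructed in the preceding paragraphs, and then combine the variation formula for the eta invariant with the spectral flow theorem of Nicolaescu.

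First, observe that at $\theta=0$ one has ${\widetilde P}(0)=\Pi_{>, {\mathcal L}_{0}}$ and at $\theta=\frac{\pi}{2}$ one has ${\widetilde P}(\frac{\pi}{2})={\mathcal P}_{-, {\mathcal L}_{0}}$, so ${\widetilde P}(\theta)$ is a smooth path of orthogonal projections defining self-adjoint realizations $\B_{{\widetilde P}(\theta)}$ (Lemma \ref{Lemma:3.33}). Conjugating by the unitary $\Psi_{\theta}$ from (\ref{E:3.8}) pulls these realizations back to a single fixed domain $\Dom(\B_{{\widetilde P}(0)})$; the resulting smooth family $\B(\theta)$ is a one-parameter family of order one elliptic $\Psi$DO's with constant domain, to which the standard variation calculation (\ref{E:3.155})--(\ref{E:3.156}) applies.

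Next I invoke Lemma \ref{Lemma:3.9} to conclude that $\frac{d}{d\theta}\eta_{\B(\theta)}(0;c(\theta))=0$ on each subinterval where $c(\theta)$ is constant. Combining this with the spectral flow formula (\ref{E:3.4557}) yields
\begin{equation*}
\eta(\B_{{\mathcal P}_{-, {\mathcal L}_{0}}}) - \eta(\B_{\Pi_{>, {\mathcal L}_{0}}}) \;=\; \SF(\B_{{\widetilde P}(\theta)})_{\theta \in [0, \frac{\pi}{2}]} \;+\; \int_{0}^{\frac{\pi}{2}} \tfrac{d}{d\theta}\eta_{\B_{{\widetilde P}(\theta)}}(0)\, d\theta \;=\; \SF(\B_{{\widetilde P}(\theta)})_{\theta \in [0, \frac{\pi}{2}]}.
\end{equation*}
Nicolaescu's identification of the spectral flow of a family of self-adjoint boundary value problems with the Maslov index of the boundary projections against the Cauchy data space (Theorem 7.5 in [17], [23]) then converts the right-hand side into $\Mas({\widetilde P}(\theta),{\mathcal C}_{M})_{\theta\in[0,\frac{\pi}{2}]}$, which proves (1).

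For (2), run the same argument with the path ${\widetilde P}(-\theta)$ for $\theta\in[0,\frac{\pi}{2}]$. This is legitimate because the unitary operator describing $\Imm{\mathcal P}_{+}$ as a graph is $-U_{{\mathcal P}_{-}}$, so ${\widetilde P}(-\frac{\pi}{2})={\mathcal P}_{+, {\mathcal L}_{0}}$ while ${\widetilde P}(0)=\Pi_{>, {\mathcal L}_{0}}$. The algebraic identities of Lemma \ref{Lemma:3.1} and Lemma \ref{Lemma:3.2} are preserved under $\theta\mapsto -\theta$, so the derivation of the analog of (\ref{E:3.156}) and the vanishing statement in Lemma \ref{Lemma:3.9} carry over verbatim; the only change is replacing $T'(\theta)$ by its negative, which does not affect any residue computation. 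The main obstacle is therefore the vanishing of the eta-derivative captured by Lemma \ref{Lemma:3.9}, which rests on the anticommutation relation $U_{\Pi_{>}}^{\ast}U_{{\mathcal P}_{-}}=-U_{{\mathcal P}_{-}}^{\ast}U_{\Pi_{>}}$ from Lemma \ref{Lemma:3.1}(3) together with the vanishing of the noncommutative residue on idempotents (Lemma \ref{Lemma:3.7}); once that input is granted, the theorem is essentially a packaging of the spectral flow/Maslov index correspondence.
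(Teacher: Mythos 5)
Your proposal is correct and follows essentially the same route as the paper: interpolate via ${\widetilde P}(\theta)$, conjugate by $\Psi_\theta$ to fix the domain, invoke Lemma \ref{Lemma:3.9} (built on Lemma \ref{Lemma:3.1}(3), Lemma \ref{Lemma:3.6}, and Lemma \ref{Lemma:3.7}) to kill the continuous variation, then apply the spectral flow formula (\ref{E:3.4557}) and Nicolaescu's identification with the Maslov index, with (2) handled by the path ${\widetilde P}(-\theta)$ since the unitary for ${\mathcal P}_{+}$ is $-U_{{\mathcal P}_{-}}$. No substantive difference from the paper's argument.
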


\vspace{0.3 cm}

\section{Gluing formula of the refined analytic torsion}

\vspace{0.2 cm}

The gluing formula of the analytic torsion with respect to the relative and absolute boundary conditions ([9], [21], [29]) and the gluing formula of the eta invariant with respect to the APS boundary condition ([6], [7], [17], [32], [33]) are well known.
In this section we are going to use Theorem \ref{Theorem:2.11} and Theorem \ref{Theorem:3.13} together with results in [9], [6] and [17] to
obtain the gluing formula of the refined analytic torsion when $\nabla$ is an acyclic Hermitian connection.

Let $({\widehat M}, g^{\widehat M})$ be a closed Riemannian manifold of dimension $m = 2r -1$ and
${\widehat E} \rightarrow {\widehat M}$ be a flat vector bundle with a flat connection $\nabla$.
We denote by $Y$ a hypersurface of ${\widehat M}$ such that ${\widehat M}-Y$ has two components
whose closures are denoted by $M_{1}$ and $M_{2}$, {\it i.e.}
${\widehat M} = M_{1} \cup_{Y} M_{2}$.
We assume that $g^{\widehat M}$ is a product metric near $Y$ and that $\nabla$ is a Hermitian connection.
Let $\partial u$ be the unit normal vector field on a collar neighborhood of $Y$ such that $\partial u$ is outward on $M_{1}$ and inward on $M_{2}$.
We denote by $\B^{{\widehat M}}$ the odd signature operator on ${\widehat M}$ and
denote by $\B^{M_{1}}$, $\B^{M_{2}}$ ($E_{1}$, $E_{2}$, $g^{M_{1}}$, $g^{M_{2}}$) the restriction of
$\B^{{\widehat M}}$ (${\widehat E}$, $g^{{\widehat M}}$) to $M_{1}$, $M_{2}$.
We impose the boundary condition ${\mathcal P}_{+, {\mathcal L}_{1}}$ on $M_{1}$ and ${\mathcal P}_{-, {\mathcal L}_{0}}$ on $M_{2}$.
Then (\ref{E:1.17}) and (\ref{E:1.18}) show that

\begin{eqnarray} \label{E:4.1}
& & \log \Det_{\gr, \theta} (\B^{M_{1}}_{\even, {\mathcal P}_{+, {\mathcal L}_{1}}}) +
\log \Det_{\gr, \theta} (\B^{M_{2}}_{\even, {\mathcal P}_{-, {\mathcal L}_{0}}})   \\
& = & \frac{1}{2} \sum_{q=0}^{m} (-1)^{q+1} \cdot q \cdot \left( \log \Det_{2\theta} (\B^{M_{1}}_{q, {\widetilde {\mathcal P}_{1}}})^{2} +
\log \Det_{2\theta} (\B^{M_{2}}_{q, {\widetilde {\mathcal P}_{0}}})^{2}  \right)
 -  i \pi \hspace{0.1 cm} \left(  \eta(\B^{M_{1}}_{\even, {\mathcal P}_{+, {\mathcal L}_{1}}})  +
\hspace{0.1 cm} \eta(\B^{M_{2}}_{\even, {\mathcal P}_{-, {\mathcal L}_{0}}})  \right).   \nonumber
\end{eqnarray}

\vspace{0.2 cm}

\noindent
Theorem \ref{Theorem:2.11} together with Theorem 4.3 in [9] (p.36 in [9], cf. [21], [29]) leads to the following result.

\vspace{0.2 cm}

\begin{lemma}  \label{Lemma:4.1}
We assume that for each $0 \leq q \leq m$, $i = 1, 2$, $H^{q}({\widehat M}, {\widehat E}) = H^{q}(M_{i}, Y; E_{i}) = H^{q}(M_{i};E_{i}) = 0$. Then,

\begin{eqnarray*}
& & \frac{1}{2} \sum_{q=0}^{m} (-1)^{q+1} \cdot q \cdot \left( \log \Det_{2\theta} (\B^{M_{1}}_{q, {\widetilde {\mathcal P}_{1}}})^{2} +
\log \Det_{2\theta} (\B^{M_{2}}_{q, {\widetilde {\mathcal P}_{0}}})^{2}  \right)   \\
& = & \frac{1}{2} \sum_{q=0}^{m} (-1)^{q+1} \cdot q \cdot \left( \log \Det_{2\theta} (\B^{M_{1}}_{q, \Abs})^{2} +
\log \Det_{2\theta} (\B^{M_{2}}_{q, \rel})^{2}  \right)
 =  \frac{1}{2} \sum_{q=0}^{m} (-1)^{q+1} \cdot q \cdot \log \Det_{2\theta} (\B^{{\widehat M}}_{q})^{2}.
\end{eqnarray*}

\end{lemma}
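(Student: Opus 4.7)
The plan is to split Lemma \ref{Lemma:4.1} into the two displayed equalities and treat each by invoking the previously established machinery. For the first equality I would use Theorem \ref{Theorem:2.11} once on each piece: part (2) applied to $M_{1}$ gives
\[
\sum_{q=0}^{m} (-1)^{q+1} q \log \Det_{2\theta} (\B^{M_{1}}_{q, \widetilde{\mathcal P}_{1}})^{2}
\;=\; \sum_{q=0}^{m} (-1)^{q+1} q \log \Det_{2\theta} (\B^{M_{1}}_{q, \rel})^{2}
\;-\; \tfrac{1}{4}\sum_{q=0}^{m-1}\log \Det_{2\theta}\B_{Y, q}^{2},
\]
while part (1) applied to $M_{2}$ gives the analogous identity with a $+\tfrac{1}{4}\sum_{q=0}^{m-1}\log \Det_{2\theta}\B_{Y, q}^{2}$ correction. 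The two boundary-determinant contributions have opposite signs and therefore cancel when the $M_{1}$ and $M_{2}$ lines are added. Then I would apply Theorem \ref{Theorem:2.11}(3) on $M_{1}$ to trade the relative boundary condition for the absolute one (the alternating sum is insensitive to this swap), producing exactly the second member of the proposed chain.

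For the second equality I would invoke the classical gluing formula for the Ray-Singer analytic torsion subject to absolute/relative boundary conditions, as recorded in Theorem~4.3 of Burghelea-Friedlander-Kappeler [9] (see also Lück [21] and [29]). Under the standing acyclicity hypotheses $H^{q}(\widehat M, \widehat E)=H^{q}(M_{i}, Y; E_{i})=H^{q}(M_{i}; E_{i})=0$, the Mayer-Vietoris long exact sequence is exact in the cohomology-free sense, so the torsion of the exact sequence contributes trivially and the formula reads
\[
\tfrac{1}{2}\sum_{q=0}^{m} (-1)^{q+1} q \log \Det_{2\theta} (\B^{\widehat M}_{q})^{2}
\;=\; \tfrac{1}{2}\sum_{q=0}^{m} (-1)^{q+1} q \Bigl( \log \Det_{2\theta} (\B^{M_{1}}_{q, \Abs})^{2} + \log \Det_{2\theta} (\B^{M_{2}}_{q, \rel})^{2}\Bigr),
\]
which is exactly the identity claimed.

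The only point requiring care is conventional: Theorem 4.3 of [9] is stated in terms of the Ray-Singer torsion, so one has to translate $\tfrac{1}{2}\sum(-1)^{q+1}q\log\Det_{2\theta}\B^{2}_{q}$ into the Ray-Singer normalization and verify that the vanishing-cohomology assumption makes the auxiliary torsion of the Mayer-Vietoris sequence trivial. Apart from this bookkeeping, the argument is a direct substitution, and I expect no analytic obstacle: the comparison step from Section~2 kills the boundary zeta-determinant correction, and the cohomological assumption makes the BFK-Lück-BFK gluing formula take its simplest possible form.
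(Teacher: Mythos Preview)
Your proposal is correct and follows exactly the approach the paper intends: the paper's own justification of Lemma~\ref{Lemma:4.1} is the single sentence ``Theorem~\ref{Theorem:2.11} together with Theorem~4.3 in [9] (p.~36 in [9], cf.~[21], [29]) leads to the following result,'' and you have merely unpacked what that sentence means---applying parts (1), (2) of Theorem~\ref{Theorem:2.11} to $M_{2}$, $M_{1}$ so the $\pm\tfrac{1}{4}\sum_{q}\log\Det_{2\theta}\B_{Y,q}^{2}$ corrections cancel, using part (3) to pass from relative to absolute on $M_{1}$, and then invoking the Burghelea--Friedlander--Kappeler/L\"uck gluing formula under acyclicity.
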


\vspace{0.3 cm}

Under the assumption in Lemma \ref{Lemma:4.1}
Theorem \ref{Theorem:3.13} shows that
\begin{equation}  \label{E:4.2}
\eta ({\B^{M_{2}}_{{\mathcal P}_{-}}}) \hspace{0.1 cm} - \hspace{0.1 cm} \eta ({\B^{M_{2}}_{\Pi_{>}}})
 \hspace{0.1 cm} = \hspace{0.1 cm}   \Mas ( {\widetilde P}(\theta), \hspace{0.1 cm} {\mathcal C}_{M_{2}})_{\theta \in [0, \frac{\pi}{2}]}.
\end{equation}

\vspace{0.2 cm}

\noindent
We next consider $\eta ({\B^{M_{1}}_{{\mathcal P}_{+}}}) - \eta ({\B^{M_{1}}_{\Pi_{<}}})$.
Since $\partial u$ is the outward normal derivative on a collar neighborhood of $Y$ on $M_{1}$,
to use Theorem \ref{Theorem:3.13} we rewrite the odd signature operator $\B^{M_{1}}$ near the boundary by
$$
\B^{M_{1}} = {\mathcal \gamma} (\partial u + {\mathcal A}) = - {\mathcal \gamma} ( - \partial u - {\mathcal A}).
$$

\noindent
Here $- \partial u$ is the inward normal derivative to $Y$ on $M_{1}$.
Since $\Pi_{<}({\mathcal A}) = \Pi_{>}(- {\mathcal A})$ and $I - {\widetilde P}(\theta)$ is a path connecting
$\Pi_{<}({\mathcal A})$ and ${\mathcal P}_{+}$, Theorem \ref{Theorem:3.13} shows that

\begin{eqnarray}  \label{E:4.3}
\eta ({\B^{M_{1}}_{{\mathcal P}_{+}}}) \hspace{0.1 cm} - \hspace{0.1 cm} \eta ({\B^{M_{1}}_{\Pi_{<}({\mathcal A})}})
& = & \eta ({\B^{M_{1}}_{{\mathcal P}_{+}}}) \hspace{0.1 cm} - \hspace{0.1 cm} \eta ({\B^{M_{1}}_{\Pi_{>}(- {\mathcal A})}})
\hspace{0.1 cm} = \hspace{0.1 cm} \Mas ( I - {\widetilde P}(\theta), \hspace{0.1 cm} {\mathcal C}_{M_{1}})_{\theta \in [0, \frac{\pi}{2}]}.
\end{eqnarray}

\noindent
Equations (\ref{E:4.2}) and (\ref{E:4.3}) together with Theorem 8.8 in [17] show that

\begin{eqnarray}  \label{E:4.4}
& & \eta ({\B^{M_{1}}_{{\mathcal P}_{-}}}) \hspace{0.1 cm} + \hspace{0.1 cm} \eta ({\B^{M_{2}}_{{\mathcal P}_{+}}}) \nonumber \\
& = & \eta ({\B^{M_{1}}_{\Pi_{<}}}) \hspace{0.1 cm} + \hspace{0.1 cm} \eta ({\B^{M_{2}}_{\Pi_{>}}})
\hspace{0.1 cm} + \hspace{0.1 cm} \Mas ( {\widetilde P}(\theta), \hspace{0.1 cm} {\mathcal C}_{M_{2}})_{\theta \in [0, \frac{\pi}{2}]}
\hspace{0.1 cm} + \hspace{0.1 cm} \Mas ( I - {\widetilde P}(\theta), \hspace{0.1 cm} {\mathcal C}_{M_{1}})_{\theta \in [0, \frac{\pi}{2}]} \nonumber \\
& = & \eta ({\B^{{\widehat M}}})
\hspace{0.1 cm} + \hspace{0.1 cm} \Mas ( {\widetilde P}(\theta), \hspace{0.1 cm} {\mathcal C}_{M_{2}})_{\theta \in [0, \frac{\pi}{2}]}
\hspace{0.1 cm} + \hspace{0.1 cm} \Mas ( I - {\widetilde P}(\theta), \hspace{0.1 cm} {\mathcal C}_{M_{1}})_{\theta \in [0, \frac{\pi}{2}]} .
\end{eqnarray}

\vspace{0.2 cm}

\begin{lemma}  \label{Lemma:4.2}
Under the assumption of Lemma \ref{Lemma:4.1} we have :

\begin{eqnarray*}  \label{E:4.444}
\Mas ( {\widetilde P}(\theta), \hspace{0.1 cm} {\mathcal C}_{M_{2}})_{\theta \in [0, \frac{\pi}{2}]}
\hspace{0.1 cm} = \hspace{0.1 cm} \Mas ( I - {\widetilde P}(\theta), \hspace{0.1 cm} {\mathcal C}_{M_{1}})_{\theta \in [0, \frac{\pi}{2}]} .
\end{eqnarray*}

\noindent
In particular,
$\hspace{0.1 cm}  \eta ({\B^{M_{1}}_{{\mathcal P}_{-}}}) \hspace{0.1 cm} + \hspace{0.1 cm} \eta ({\B^{M_{2}}_{{\mathcal P}_{+}}})
\hspace{0.1 cm} \equiv \hspace{0.1 cm}
\eta ({\B^{{\widehat M}}}) \qquad (\Mod 2 \hspace{0.1 cm} {\Bbb Z}). $
\end{lemma}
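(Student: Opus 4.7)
First I would reduce Lemma \ref{Lemma:4.2} to the equality of Maslov indices claimed in its first line. Once that equality is established, the right-hand side of (\ref{E:4.4}) becomes $\eta(\B^{{\widehat M}}) + 2 \Mas({\widetilde P}(\theta), {\mathcal C}_{M_{2}})$; since every Maslov index is an integer, the ``in particular'' congruence modulo $2 {\Bbb Z}$ follows immediately.

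To prove the equality of Maslov indices, I would use the standard complementarity of Calder\'on projectors. The acyclicity hypothesis of Lemma \ref{Lemma:4.1}, combined with the $L^{2}$-solution argument used in the proof of Lemma \ref{Lemma:2.99} (invoking Proposition 4.9 of [1]) applied to the cylindrical extensions of $M_{1}$ and $M_{2}$, forces every $L^{2}$-solution of the extended odd signature operator on either side to vanish. Taking into account that $\partial u$ is outward on $M_{1}$ and inward on $M_{2}$, the Lagrangian subspaces $\Imm {\mathcal C}_{M_{1}}$ and $\Imm {\mathcal C}_{M_{2}}$ of the symplectic vector space $(\Omega^{\even}(M,E)|_{Y}, -i\beta\Gamma^{Y})$ are then transverse and are interchanged by the compatible complex structure $-i\beta\Gamma^{Y}$; hence they are orthogonal complements of each other and one has the projector identity
$$ {\mathcal C}_{M_{2}} \; = \; I - {\mathcal C}_{M_{1}} . $$

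Combining this identity with the general symplectic Maslov-index symmetry
$$ \Mas(L(\theta), \; I - L_{0}) \; = \; \Mas(I - L(\theta), \; L_{0}) , $$
valid for any smooth path $L(\theta)$ of orthogonal projections onto Lagrangians and any fixed such $L_{0}$ (crossings of $L(\theta)$ with $I - L_{0}$ correspond bijectively to crossings of $I - L(\theta)$ with $L_{0}$ via the compatible complex structure, with matching crossing form; compare [17]), and applying it with $L(\theta) = {\widetilde P}(\theta)$ and $L_{0} = {\mathcal C}_{M_{1}}$, one obtains
$$ \Mas({\widetilde P}(\theta), {\mathcal C}_{M_{2}}) \; = \; \Mas({\widetilde P}(\theta), I - {\mathcal C}_{M_{1}}) \; = \; \Mas(I - {\widetilde P}(\theta), {\mathcal C}_{M_{1}}) , $$
which is the required equality.

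The main obstacle is the first step: checking ${\mathcal C}_{M_{2}} = I - {\mathcal C}_{M_{1}}$ carefully in the sign conventions of this paper, in particular tracking the reversal of the normal direction between $M_{1}$ and $M_{2}$ and confirming that the two Calder\'on Lagrangians are genuinely orthogonal complements rather than merely topologically transverse. The symplectic Maslov-index symmetry in the second step is comparatively routine once the graph description of the relevant Lagrangians from Lemma \ref{Lemma:3.1} is in place.
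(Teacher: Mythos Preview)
Your reduction of the ``in particular'' congruence to the Maslov-index equality via (\ref{E:4.4}) is fine, and the symmetry $\Mas(L(\theta), I-L_{0}) = \Mas(I-L(\theta), L_{0})$ you invoke is correct (it is just conjugation by ${\mathcal \gamma}$, using $-{\mathcal \gamma}{\widetilde P}(\theta){\mathcal \gamma} = I - {\widetilde P}(\theta)$ from Lemma~\ref{Lemma:3.2}). The problem is the first step: the identity ${\mathcal C}_{M_{2}} = I - {\mathcal C}_{M_{1}}$ you want, i.e.\ $L_{M_{2}} = {\mathcal \gamma}\,L_{M_{1}}$, is \emph{not} true for the finite-collar Calder\'on projectors. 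Under the acyclicity hypothesis the Cauchy data spaces $L_{M_{1}}$ and $L_{M_{2}}$ are a transverse Fredholm pair of Lagrangians, but transversality is far from orthogonality; the Cauchy data space of $M_{i}$ depends on the full interior geometry of $M_{i}$ and there is no mechanism that forces ${\mathcal \gamma}L_{M_{1}}$ (the orthogonal complement of $L_{M_{1}}$) to coincide with $L_{M_{2}}$. The $L^{2}$-solution argument you cite concerns the \emph{infinite}-cylinder extensions and controls the limiting Lagrangians $L_{M_{i,\infty}}$, not $L_{M_{i}}$.

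The paper's proof supplies exactly the missing bridge. One first shows (citing [17]) that in the adiabatic limit the relation $L_{M_{2,\infty}} = {\mathcal \gamma}\,L_{M_{1,\infty}}$ \emph{does} hold, so that ${\mathcal C}_{M_{2,\infty}} = -{\mathcal \gamma}\,{\mathcal C}_{M_{1,\infty}}\,{\mathcal \gamma}$, and your symmetry argument then gives $\Mas({\widetilde P}(\theta), {\mathcal C}_{M_{2,\infty}}) = \Mas(I-{\widetilde P}(\theta), {\mathcal C}_{M_{1,\infty}})$. To get back to the original Calder\'on projectors one uses homotopy invariance of the Maslov index along the stretching family ${\mathcal C}_{M_{i,r}}$, $0\le r\le\infty$; this requires checking that the intersections $\Ker{\widetilde P}(0)\cap\Imm{\mathcal C}_{M_{i,r}}$ and $\Ker{\widetilde P}(\tfrac{\pi}{2})\cap\Imm{\mathcal C}_{M_{i,r}}$ stay zero-dimensional for all $r$, which follows from the acyclicity hypothesis because these dimensions equal $\dim\Ker\B_{\Pi_{>}}$ and $\dim\Ker\B_{{\mathcal P}_{-}}$ on $M_{i,r}$ and are topological invariants (Lemma~\ref{Lemma:1.2} and Proposition~4.9 of [1]). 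Without this adiabatic step your argument does not go through.
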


\begin{proof}

We put $M_{1, r} = M_{1} \cup_{Y} \left( [0, r] \times Y \right)$, $M_{2, r} = M_{2} \cup_{Y} \left( [-r, 0] \times Y \right)$ and
 $M_{1, \infty} = M_{1} \cup_{Y} \left( [0, \infty) \times Y \right)$, $M_{2, \infty} = M_{2} \cup_{Y} \left( (- \infty, 0] \times Y \right)$.
We denote the extensions of $\B$ to $M_{i, r}$, $M_{i, \infty}$
by $\B_{M_{i, r}}$, $\B_{M_{i, \infty}}$ and
denote the corresponding Calder\'on projectors by ${\mathcal C}_{M_{i, r}}$, and
$\Imm {\mathcal C}_{M_{i, r}} := L_{M_{i, r}}$, and $\lim_{r \rightarrow \infty}  L_{M_{i, r}} := L_{M_{i, \infty}}$ for $i = 1, 2$.
We also denote the orthogonal projection to $L_{M_{i, \infty}}$ by ${\mathcal C}_{M_{i, \infty}}$.
Under the assumption of Lemma \ref{Lemma:4.1} it is shown in [17] (p.610 in [17]) that
$L_{M_{1, \infty}}$ and $L_{M_{2, \infty}}$ are Lagrangian subspaces
and $L_{M_{2, \infty}} = {\mathcal \gamma} L_{M_{1, \infty}}$.
Hence ${\mathcal C}_{M_{2, \infty}} = - {\mathcal \gamma} {\mathcal C}_{M_{1, \infty}} {\mathcal \gamma}$.
We define a homotopy $(F(\theta, s), \hspace{0.1 cm} G(\theta, s))$ on $M_{2}$ as follows.
$$
F(\theta, s) = {\widetilde P}(\theta), \qquad G(\theta, s) = {\mathcal C}_{M_{2, s}}, \qquad
(0 \leq \theta \leq \frac{\pi}{2}, \quad 0 \leq s \leq \infty).
$$
Then, $(F(\theta, 0), \hspace{0.1 cm} G(\theta, 0)) = ({\widetilde P}(\theta), \hspace{0.1 cm}  {\mathcal C}_{M_{2}})$ and
$(F(\theta, \infty), \hspace{0.1 cm} G(\theta, \infty)) = ({\widetilde P}(\theta), \hspace{0.1 cm}  {\mathcal C}_{M_{2, \infty}})$.
Since $\Ker \B_{\Pi_{>}}$ and $\Ker \B_{{\mathcal P}_{-}}$ are topological invariants (cf. Lemma \ref{Lemma:1.2} and Proposition 4.9 in [1]),
the assumption implies that

\begin{eqnarray*}
\Dim \left( \Ker F(0, s) \cap \Imm G(0, s) \right) & = &
\Dim \left( \Ker \Pi_{>}({\mathcal A}) \cap \Imm {\mathcal C}_{M_{2, s}} \right) \hspace{0.1 cm} = \hspace{0.1 cm}  0 ,  \\
\Dim \left( \Ker F(\frac{\pi}{2}, s) \cap \Imm G(\frac{\pi}{2}, s) \right) & = &
\Dim \left( \Ker {\mathcal P}_{-} \cap \Imm {\mathcal C}_{M_{2, s}} \right)
\hspace{0.1 cm} = \hspace{0.1 cm}  0 ,
\end{eqnarray*}

\noindent
which shows (cf. p.587 in [17]) that

$$
\Mas ( {\widetilde P}(\theta), \hspace{0.1 cm} {\mathcal C}_{M_{2}})_{\theta \in [0, \frac{\pi}{2}]}
\hspace{0.1 cm} = \hspace{0.1 cm} \Mas ( {\widetilde P}(\theta), \hspace{0.1 cm} {\mathcal C}_{M_{2, \infty}})_{\theta \in [0, \frac{\pi}{2}]}.
$$

\noindent
Similarly, we have

$$
\Mas ( I - {\widetilde P}(\theta), \hspace{0.1 cm} {\mathcal C}_{M_{1}})_{\theta \in [0, \frac{\pi}{2}]}
\hspace{0.1 cm} = \hspace{0.1 cm} \Mas ( I - {\widetilde P}(\theta), \hspace{0.1 cm} {\mathcal C}_{M_{1, \infty}})_{\theta \in [0, \frac{\pi}{2}]}
\hspace{0.1 cm} = \hspace{0.1 cm}
\Mas ( - {\mathcal \gamma} {\widetilde P}(\theta) {\mathcal \gamma},
\hspace{0.1 cm} - {\mathcal \gamma} {\mathcal C}_{M_{2, \infty}} {\mathcal \gamma} )_{\theta \in [0, \frac{\pi}{2}]}.
$$

\noindent
Hence, we have (cf. p.586 in [17])

\begin{eqnarray*}
\Mas ( {\widetilde P}(\theta), \hspace{0.1 cm} {\mathcal C}_{M_{2}})_{\theta \in [0, \frac{\pi}{2}]}
& = & \Mas ( {\widetilde P}(\theta), \hspace{0.1 cm} {\mathcal C}_{M_{2, \infty}})_{\theta \in [0, \frac{\pi}{2}]}
\hspace{0.1 cm} = \hspace{0.1 cm}
\Mas ( - {\mathcal \gamma} {\widetilde P}(\theta) {\mathcal \gamma},
\hspace{0.1 cm} - {\mathcal \gamma} {\mathcal C}_{M_{2, \infty}} {\mathcal \gamma})_{\theta \in [0, \frac{\pi}{2}]}  \\
& = &  \Mas ( I - {\widetilde P}(\theta), \hspace{0.1 cm} {\mathcal C}_{M_{1, \infty}})_{\theta \in [0, \frac{\pi}{2}]}
\hspace{0.1 cm} = \hspace{0.1 cm}
\Mas ( I - {\widetilde P}(\theta), \hspace{0.1 cm} {\mathcal C}_{M_{1}})_{\theta \in [0, \frac{\pi}{2}]} ,
\end{eqnarray*}

\noindent
which completes the proof of the lemma.
\end{proof}

\vspace{0.3 cm}

Under the assumption of Lemma \ref{Lemma:4.1} the refined analytic torsion $T_{{\widehat M}}(g^{{\widehat M}}, \nabla)$ on
${\widehat M}$ is defined by (Definition 10.1 in [4])

$$
\log T_{{\widehat M}}(g^{{\widehat M}}, \nabla) \hspace{0.1 cm} = \hspace{0.1 cm} \log \Det_{\gr, \theta} ( \B_{\even}^{{\widehat M}} )
\hspace{0.1 cm} + \hspace{0.1 cm} \pi i \hspace{0.1 cm} (\rank({\widehat E})) \hspace{0.1 cm}
\eta(\B_{\even}^{{\widehat M}, \trivial}).
$$

\vspace{0.2 cm}

\noindent
The refined analytic torsion $T_{M_{1}, {\mathcal P}_{+}}(g^{M_{1}}, \nabla)$ and $T_{M_{2}, {\mathcal P}_{-}}(g^{M_{2}}, \nabla)$ on $M_{1}$, $M_{2}$
with respect to the boundary conditions ${\mathcal P}_{+}$ and ${\mathcal P}_{-}$ are defined similarly (Dfinition 4.9 in [14]).
Lemma \ref{Lemma:4.1} and Lemma \ref{Lemma:4.2} lead to the following theorem, which is the main result of this paper.

\vspace{0.2 cm}

\begin{theorem}  \label{Theorem:4.3}
Let $({\widehat M}, g^{\widehat M})$ be a closed Riemannian manifold of dimension $m = 2r -1$ and $Y$ be a hypersurface so that
${\widehat M} = M_{1} \cup_{Y} M_{2}$. We assume that $g^{\widehat M}$ is a product metric near $Y$ and
for each $0 \leq q \leq m$, $i = 1, 2$, $H^{q}({\widehat M}, {\widehat E}) = H^{q}(M_{i}, Y; E_{i}) = H^{q}(M_{i};E_{i}) = 0$. Then,

\begin{eqnarray*}
\log T_{{\widehat M}}(g^{{\widehat M}}, \nabla) & = & \frac{1}{2} \sum_{q=0}^{m} (-1)^{q+1} \cdot q \cdot \log \Det_{2\theta} (\B^{{\widehat M}}_{q})^{2}
 -  i \pi \hspace{0.1 cm} \eta(\B^{{\widehat M}}_{\even}) + i \pi (\rank({\widehat E})) \eta(\B_{\even}^{{\widehat M}, \trivial})  \\
 & \equiv & \frac{1}{2} \sum_{q=0}^{m} (-1)^{q+1} \cdot q \cdot \left( \log \Det_{2\theta} (\B^{M_{1}}_{q, {\widetilde {\mathcal P}_{1}}})^{2} +
\log \Det_{2\theta} (\B^{M_{2}}_{q, {\widetilde {\mathcal P}_{0}}})^{2}  \right)  \\
& & -  i \pi \hspace{0.1 cm} \left( \eta(\B^{M_{1}}_{\even, {\mathcal P}_{+}}) + \eta(\B^{M_{2}}_{\even, {\mathcal P}_{-}}) \right)
+ i \pi (\rank({\widehat E}))
\hspace{0.1 cm} \left( \eta(\B^{M_{1}, \trivial}_{\even, {\mathcal P}_{+}}) + \eta(\B^{M_{2}, \trivial}_{\even, {\mathcal P}_{-}}) \right)  \\
& = & \log T_{M_{1}, {\mathcal P}_{+}}(g^{M_{1}}, \nabla) \hspace{0.1 cm} + \hspace{0.1 cm}
\log T_{M_{2}, {\mathcal P}_{-}}(g^{M_{2}}, \nabla) \qquad (\Mod \hspace{0.1 cm} 2\pi i {\Bbb Z}).
\end{eqnarray*}

\noindent
Equivalently, we have

$$
T_{{\widehat M}}(g^{{\widehat M}}, \nabla)  \hspace{0.1 cm} = \hspace{0.1 cm} T_{M_{1}, {\mathcal P}_{+}}(g^{M_{1}}, \nabla) \cdot
T_{M_{2}, {\mathcal P}_{-}}(g^{M_{2}}, \nabla).
$$

\end{theorem}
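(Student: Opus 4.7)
The plan is to expand the sum $\log \Det_{\gr, \theta}(\B^{M_{1}}_{\even, {\mathcal P}_{+, {\mathcal L}_{1}}}) + \log \Det_{\gr, \theta}(\B^{M_{2}}_{\even, {\mathcal P}_{-, {\mathcal L}_{0}}})$ using formulas (\ref{E:1.17}) and (\ref{E:1.18}) and to exploit the fact that the boundary correction term $\pm \frac{\pi i}{2}\bigl(\frac{1}{4}\sum_{q} \zeta_{\B_{Y, q}^{2}}(0) + \sum_{q}(r-1-q)(l_{q}^{+} - l_{q}^{-})\bigr)$ appears with opposite signs for ${\mathcal P}_{-, {\mathcal L}_{0}}$ and ${\mathcal P}_{+, {\mathcal L}_{1}}$, so the two copies cancel. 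This yields equation (\ref{E:4.1}), which isolates a weighted sum of ordinary zeta-determinants minus $i\pi$ times the sum of the two eta-invariants.

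For the determinant part I would invoke Lemma \ref{Lemma:4.1}. Theorem \ref{Theorem:2.11} trades the $\widetilde{\mathcal P}_{0}$ and $\widetilde{\mathcal P}_{1}$ determinants on $M_{1}$ and $M_{2}$ for the relative and absolute ones, with extra terms $\pm \frac{1}{4}\sum_{q} \log \Det_{2\theta} \B_{Y,q}^{2}$ which again carry opposite signs between $M_{1}$ and $M_{2}$ and cancel. The Burghelea--Friedlander--Kappeler gluing formula of [9, Theorem 4.3] (cf.\ [21], [29]) then assembles these into the weighted sum of zeta-determinants on the closed manifold ${\widehat M}$.

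For the eta-invariants I would apply Lemma \ref{Lemma:4.2}. Theorem \ref{Theorem:3.13} reduces $\eta(\B^{M_{2}}_{{\mathcal P}_{-}})$ and $\eta(\B^{M_{1}}_{{\mathcal P}_{+}})$ to $\eta(\B^{M_{2}}_{\Pi_{>}})$ and $\eta(\B^{M_{1}}_{\Pi_{<}})$ modulo two Maslov indices, and the Kirk--Lesch gluing formula (Theorem 8.8 in [17]) glues the APS pieces into $\eta(\B^{{\widehat M}})$ modulo $2{\Bbb Z}$. The two Maslov corrections are then shown to cancel via the adiabatic identification $L_{M_{2,\infty}} = {\mathcal \gamma} L_{M_{1,\infty}}$ of the limiting Calder\'on Lagrangians. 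Running the identical argument for the trivial connection $\nabla^{\trivial}$ produces the corresponding identity for $\eta(\B^{{\widehat M}, \trivial}_{\even})$, and adding the two contributions delivers the theorem modulo $2\pi i {\Bbb Z}$.

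The principal obstacle is Lemma \ref{Lemma:4.2}, i.e.\ showing that the vanishing cohomology hypothesis $H^{q}({\widehat M}, {\widehat E}) = H^{q}(M_{i}, Y; E_{i}) = H^{q}(M_{i}; E_{i}) = 0$ indeed forces the limiting Cauchy data spaces $L_{M_{i,\infty}}$ to be Lagrangians interchanged by ${\mathcal \gamma}$, so that the homotopy $(F(\theta, s), G(\theta, s)) = ({\widetilde P}(\theta), {\mathcal C}_{M_{2,s}})$ has no eigenvalue crossings at the endpoints $\theta = 0, \pi/2$ and the Maslov indices on $M_{1}$ and $M_{2}$ can be matched. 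Once this symplectic reduction is in hand, the rest of the argument is essentially cancellation of the boundary anomalies in (\ref{E:1.17})--(\ref{E:1.18}) and in Theorem \ref{Theorem:2.11}, together with two citations of the known gluing formulas.
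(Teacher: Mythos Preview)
Your outline matches the paper's argument essentially step for step: the cancellation of the boundary anomaly in (\ref{E:1.17})--(\ref{E:1.18}) to obtain (\ref{E:4.1}); Lemma~\ref{Lemma:4.1} via Theorem~\ref{Theorem:2.11} and the BFK gluing formula; Lemma~\ref{Lemma:4.2} via Theorem~\ref{Theorem:3.13}, the Kirk--Lesch formula, and the adiabatic identification $L_{M_{2,\infty}} = {\mathcal \gamma} L_{M_{1,\infty}}$; and finally the repetition for $\nabla^{\trivial}$. One small correction of wording: in Lemma~\ref{Lemma:4.2} the two Maslov indices are not shown to \emph{cancel} but to be \emph{equal}, so their sum is an even integer and drops out only $\Mod\,2{\Bbb Z}$, which is exactly what is needed for the $\Mod\,2\pi i{\Bbb Z}$ statement.
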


\vspace{0.3 cm}

\vspace{0.5 cm}

\end{document}